\newtheorem{theorem}{Theorem}[section]
\newtheorem{corollary}[theorem]{Corollary}
\newtheorem{lemma}[theorem]{Lemma}
\newtheorem{proposition}[theorem]{Proposition}
\newtheorem{definition}{Definition}[section]
\newtheorem{example}[theorem]{Example}
\newtheorem{conjecture}[theorem]{Conjecture}
\newcommand{\HL}{\text{HL}_r}
\newcommand{\Q}{Q_\xi}
\newcommand{\seed}{K_0(\mathscr{C}_{\ell})}
\begin{document}
\title[Generalized HL-modules and cluster algebras]{Generalized Hernandez-Leclerc modules and cluster algebras}
\date{\today}
	
\author{Jingmin Guo}
\address{Jingmin Guo: School of Mathematics and Statistics, Lanzhou University, Lanzhou 730000, P. R. China.}
\email{guojm18@lzu.edu.cn}
	
\author{Bing Duan}
\address{Bing Duan: School of Mathematics and Statistics, Lanzhou University, Lanzhou 730000, P. R. China.}
\email{duanbing@lzu.edu.cn}
	
\author{Yanfeng Luo$^\dag$}
\address{Yanfeng Luo: School of Mathematics and Statistics, Lanzhou University, Lanzhou 730000, P. R. China.}
\email{luoyf@lzu.edu.cn}
\thanks{$\dag$ Corresponding author}
	
\date{}
	
\maketitle
	
\begin{abstract}
We introduce generalized Hernandez-Leclerc modules over $U_q(\widehat{\mathfrak{sl}_{n+1}})$ as a generalization of Hernandez-Leclerc modules of type A, and prove that they are real and prime via monoidal categorifications of cluster algebras.

\hspace{0.15cm}
		
\noindent
{\bf Keywords}:  Quantum affine algebras, Hernandez-Leclerc modules, generalized Hernandez-Leclerc modules, Cluster algebras
		
\hspace{0.15cm}
		
\noindent
{\bf 2020 Mathematics Subject Classification}: 13F60; 17B37

\end{abstract}
	
\tableofcontents
	
\setcounter{section}{0}
	
\allowdisplaybreaks
	
\section{Introduction}
Let $\mathfrak{g}$ be a complex simple Lie algebra and $U_q(\widehat{\mathfrak{g}})$ the corresponding quantum affine algebra. Let $\mathscr{C}$ be the category of finite-dimensional $U_q(\widehat{\mathfrak{g}})$-modules, and $K_0(\mathscr{C})$ the Grothendieck ring of $\mathscr{C}$. Every simple module in $\mathscr{C}$ is parameterized by its highest $l$-weight monomial \cite{CP944,FM01,FR99}. The study on simple modules has continuously attracted numerous researchers, for example, fundamental modules \cite{Bit21a, CP91, CP94, C02, CM06, J22, TDL233}, Kirillov-Reshetikhin modules \cite{H06, HL10, HL16, N03, N04, N11},  minimal affinizations \cite{CG11, CMY13, H07, LQ17, TDL23, ZDLL16}, snake modules of type AB \cite{DLL20, DS20, MY12a,MY12b}, Hernandez-Leclerc modules \cite{BC19, DLL2021,DS23, GDL22, HL13}.

A simple module $M$ in $\mathscr{C}$ is said to be \textit{prime} if it is not isomorphic to a tensor product of two nontrivial modules \cite{CP97}. A simple module $M$ in $\mathscr{C}$ is said to be \textit{real} if $M\otimes M$ is simple, otherwise it is said to be $\textit{imaginary}$ \cite{L03}. Classifying prime simple modules and real simple modules in $\mathscr{C}$ is an open and highly challenging question.

Cluster algebras were introduced by Fomin and Zelevinsky \cite{FZ02} as a tool to study canonical basis and total positivity from Lie theory. A cluster algebra is a commutative algebra with distinguished generators called cluster variables, which are grouped into overlapping sets called clusters. The monomials in cluster variables from the same cluster are called cluster monomials.

In \cite{HL10}, Hernandez and Leclerc introduced the concept of monoidal categorifications of cluster algebras. In particular, an abelian monoidal category of finite-dimensional $U_q(\widehat{\mathfrak{g}})$-modules was introduced, which plays an important role as the one of cluster category in additive categorifications of cluster algebras.

For $\mathfrak{g}$ of type ADE, Qin \cite{Q17}, independently, Kang, Kashiwara, Kim, and Oh \cite{KKKO18}, proved that every cluster monomial in $K_0(\mathscr{C})$ is in fact the equivalence class of a real simple module. Kashiwara, Kim, Oh, Park \cite{KKOP22} demonstrated that for $\mathfrak{g}$ of type AB, every cluster monomial in $K_0(\mathscr{C})$ is the equivalence class of a real simple module.  

Hernandez-Leclerc modules were first appeared in \cite{HL10, HL13}, and Hernandez-Leclerc modules of type A were investigated and named by Brito and Chari \cite{BC19}. In \cite{GDL22}, the authors gave a path description of $q$-characters of Hernandez-Leclerc modules. Moreover, they proved that the equivalence classes of Hernandez-Leclerc modules are cluster variables in $K_0(\mathscr{C})$, and the geometric $q$-character formula conjecture is true for Hernandez-Leclerc modules. Very recently, Hernandez-Leclerc modules of type ADE were uniformly defined and completely classified by Duan and Schiffler \cite{DS23} via an algorithm similar to the knitting algorithm of Auslander-Reiten quivers. In particular, Hernandez-Leclerc modules of type ADE are real and prime, and equations satisfied by $q$-characters of Hernandez-Leclerc modules were given \cite{BC19,DS23,GDL22}.

In \cite{BC22}, Brito and Chari constructed directly a family of imaginary modules from the perspective of representations of quantum affine algebras. They proved that there exists an imaginary module in the composition factors of the tensor product of a non-fundamental higher order Kirillov-Reshetikhin module with its dual.  Higher order Kirillov-Reshetikhin modules are prime snake modules.

In this paper, we introduce generalized HL-modules of type A as a generalization of Hernandez-Leclerc modules of type A. Every generalized HL-module of type A is of the form $L((Y_{i_1,a_1})_{r,r_1}(Y_{i_2,a_2})_{r,r_2}\dots (Y_{i_k,a_k})_{r,r_k})$, see Definition \ref{definition of generalizedHL}. In the case where $r\in \mathbb{Z}_{\geq 1}$ and $r_1=r_2=\dots=r_k=0$, we call $L((Y_{i_1,a_1})_{r,0}(Y_{i_2,a_2})_{r,0}\dots (Y_{i_k,a_k})_{r,0})$ an $\HL$-module, see Definition \ref{definition of GHL}. Moreover, if $r=1$, $\HL$-modules recover Hernandez-Leclerc modules (HL-modules for short). 

We study generalized HL-modules via monoidal categorifications of cluster algebras. Compared with HL-modules, the difficulty in the study of generalized HL-modules is that our cluster algebra is no longer of finite type. To attain it, we first study $\HL$-modules, which are related to cluster algebras of finite type, and then use the results of $\HL$-modules to study generalized HL-modules. Let us explain the process in detail as follows.

In Section \ref{section:definition of cluster algebra}, we define a cluster algebra $\mathscr{A}(\mathbf{x}, Q_\xi)$ with initial seed $(\mathbf{x}, Q_\xi)$ and with coefficients, where $\mathbf{x}=(\mathbf{f}'_1, \dots, \mathbf{f}'_n,\mathbf{x}_1,\dots,\mathbf{x}_n,\mathbf{f}''_1, \dots, \mathbf{f}''_n)$ is an initial extended cluster with initial cluster variables $\mathbf{x}_1,\dots,\mathbf{x}_n$ and frozen variables $\mathbf{f}'_1, \dots, \mathbf{f}'_n,\mathbf{f}''_1, \dots, \mathbf{f}''_n$, and $Q_\xi$ is a quiver associated to a height function $\xi$ with a type $A_n$ quiver as its principal part and with $2n$ frozen vertices $\{1',\dots,n',1'',\dots, n''\}$. In particular, for any height function $\xi$, if we delete the first $n$ frozen vertices $1',\dots,n'$ and incident arrows from $Q_\xi$, then the resulting quiver is opposite to a Chari-Brito's quiver \cite{BC19}. 

We next prove that $\mathscr{A}(\mathbf{x}, Q_\xi)$ is isomorphic to a cluster subalgebra of $K_0(\mathscr{C}_\ell)$, where $\mathscr{C_\ell}$ is a monoidal subcategory of $\mathscr{C}$ introduced by Hernandez and Leclerc \cite{HL10}, see Theorem \ref{mutation equivalent}, and give some exchange relations in $K_0(\mathscr{C}_\ell)$, see Corollary \ref{prop:mutation equations}. With the help of \cite{GLS13, KKKO18, KKOP22, Q17}, we give a criterion to compute the highest $l$-weight monomials of real prime simple modules corresponding to cluster variables, see Proposition \ref{lemma:equations satisfy by q-character}. We give a mutation sequence for an arbitrary $\HL$-module in $\seed$, see Section \ref{section of mutation sequence}, and prove that the equivalence classes of $\HL$-modules are cluster variables in $\seed$, see Theorem \ref{theorem:real and prime}. In particular, $\HL$-modules are real and prime. A recursive $q$-character formula of $\HL$-modules by an induction on the length of their highest $l$-weight monomials is given, see Proposition \ref{prop:equation system}.

By revising a mutation sequence of $L((Y_{i_1,a_1})_{r,0}(Y_{i_2,a_2})_{r,0}\dots (Y_{i_k,a_k})_{r,0})$, we formulate a mutation sequence of $L((Y_{i_1,a_1})_{r,r_1}(Y_{i_2,a_2})_{r,r_2}\dots (Y_{i_k,a_k})_{r,r_k})$ in Section \ref{section of generalizedHL}. In particular, the equivalence classes of generalized HL-modules are cluster variables of $\seed$, as a result, these modules are real and prime, see Theorem \ref{generalized HL-modules are cluster variables}.

The organization of this paper is outlined as follows. In Section \ref{preface}, we recall some basic and necessary materials on cluster algebras, quantum affine algebras and their representations, as well as Hernandez-Leclerc modules. In Section \ref{The construction of the cluster algebra}, we define a cluster algebra $\mathscr{A}(\mathbf{x},\Q)$ with coefficients, and give some exchange relations. To study generalized HL-modules, in Section \ref{The study on HLr modules}, we introduce $\HL$-modules, prove that their equivalence classes are cluster variables in $K_0(\mathscr{C}_\ell)$, and give a recursive formula of $q$-characters of $\HL$-modules. In Section \ref{section of generalizedHL}, we define generalized HL-modules by their highest $l$-weight monomials, and prove that generalized HL-modules are real and prime via monoidal categorifications of cluster algebras. In Appendix \ref{appendix}, we prove a typical case for the arrows incident to a mutable vertex in our mutation sequence.

\section{Preliminary} \label{preface}

\subsection{Cluster algebras}

We start by recalling the definition of a cluster algebra \cite{FZ02}. Let $m$ and $n$ be two positive integers with $m\geq n$. Let $\mathcal{F}$ be the field of rational functions over $\mathbb{Q}$ in $m$ independent variables. A pair $\Sigma=(\mathbf{x},Q)$ is called a seed if $\mathbf{x}=(x_1,x_2,\dots,x_m)$ is an $m$-tuple of elements of $\mathcal{F}$, which form a free generating set of $\mathcal{F}$, and $Q$ is a quiver without loops or 2-cycles and with vertices labeled by $1,2,\dots,m$. The $m$-tuple $\mathbf{x}$ is called an \textit{extended cluster}, the $n$-tuple $(x_1,x_2,\dots,x_n)$ is called a \textit{cluster}, the variables $x_i$, $i\in [1,n]$, are called \textit{cluster variables}, and the variables $x_i$, $n+1\leq i \leq m$, are called \textit{frozen variables}. Let $1\leq k \leq n$.  The mutation of the seed $\Sigma$ at $k$ will get a new seed $(\mathbf{x}',Q'):=\mu_k(\mathbf{x},Q)$, where $\mathbf{x}'=(x'_1,x'_2,\dots,x'_m)$ is defined by $x'_i=x_i$ for $i\neq k$, and
\begin{align*}
x'_k=\frac{\prod_{i\rightarrow k} x_i+\prod_{k\rightarrow j} x_j}{x_k},
\end{align*}
where the first (respectively, second) product in the right-hand side is over all arrows of $Q$ with sink (respectively, source) $k$, and $Q'$ is obtained from $Q$ by
\begin{enumerate}
\item for each oriented two-arrow path $i\rightarrow k\rightarrow j$, adding a new arrow $i\rightarrow j$ unless $n+1\leq i, j \leq m$; \label{mutation rule1}
\item reversing the orientations of all arrows incident to $k$;\label{mutation rule2}
\item repeatedly removing oriented 2-cycles until unable to do so.\label{mutation rule3}
\end{enumerate}
Denote by $\mathcal{C}$ the set of all clusters obtained from $\Sigma$ by a finite sequence of mutations. The \textit{cluster algebra} $\mathscr{A}_{\Sigma}$ associated with $\Sigma$ is the $\mathbb{Q}[x_{n+1}, \dots, x_m]$-subalgebra of the ambient field $\mathcal{F}$ generated by all cluster variables in $\mathcal{C}$. \textit{Cluster monomials} are monomials in the cluster variables and frozen variables belonging to the same extended cluster.
	
\subsection{Quantum affine algebras and their representations}\label{q-character}

Let $\mathfrak{g}=\mathfrak{sl}_{n+1}(\mathbb{C})$. Denote by $I=[1,n]$ the set of vertices of the Dynkin diagram of $\mathfrak{g}$ and by $C=(c_{ij})_{i,j\in I}$ the Cartan matrix of $\mathfrak{g}$. Let $\widehat{\mathfrak{g}}$ be the corresponding untwisted affine Lie algebra which is realized as a central extension of the loop algebra $\mathfrak{g}\otimes\mathbb{C}[t, t^{-1}]$. Let $U_q(\widehat{\mathfrak{g}})$ be the Drinfeld-Jimbo quantum enveloping algebra (quantum affine algebra for short) of $\widehat{\mathfrak{g}}$ with parameter $q\in\mathbb{C}^{\times}$ not a root of unity.
	
Let $\mathscr{C}$ be the category of finite-dimensional $U_q(\widehat{\mathfrak{g}})$-modules. Fix $a\in \mathbb{C}^{\times}$, let $Y_{i,s}=Y_{i,aq^s}$ with $i\in I$ and $s\in\mathbb{Z}$. Denote by $\mathcal{P}$ the free abelian group generated by $Y^{\pm1}_{i,s}$ with $i\in I$ and  $s\in\mathbb{Z}$, and by $\mathcal{P}^{+}$ the submonoid of $\mathcal{P}$ generated by $Y_{i,s}$ with $i\in I$ and $s\in\mathbb{Z}$. The elements in $\mathcal{P}^{+}$ are called dominant monomials. Every simple module in $\mathscr{C}$ is of the form $L(M)$, where $M\in\mathcal{P}^{+}$ is called the highest $l$-weight (or sometimes, loop-weight) monomial of $L(M)$. 
	
Denote by $K_0(\mathscr{C})$ the Grothendieck ring of $\mathscr{C}$, and by $[V]\in K_0(\mathscr{C})$ the equivalence class of $V\in \mathscr{C}$. Following \cite{FR99}, the $q$-character map is defined as an injective ring homomorphism $\chi_q: K_0(\mathscr{C}) \to \mathbb{Z}[Y^{\pm1}_{i,s}]_{i\in I; s\in \mathbb{Z}}$, where $\mathbb{Z}[Y^{\pm1}_{i,s}]_{i\in I; s\in \mathbb{Z}}$ is the ring of Laurent polynomials in infinitely many formal variables $(Y^{\pm1}_{i,s})_{i\in I; s\in \mathbb{Z}}$.

Following~\cite{FR99}, for $i\in I$, $s\in\mathbb{Z}$, one can define
\[
A^{-1}_{i,s}=Y^{-1}_{i,s-1}Y^{-1}_{i,s+1}\prod_{j:c_{ji}=-1}Y_{j,s},
\]
where the $c_{ji}$ is the entry of the Cartan matrix $C$ of $\mathfrak{g}$. Obviously, $A_{i,s}$ is a Laurent monomial in the variables $Y_{i,r}$ with $i\in I$, $r\in \mathbb{Z}$.
	
For any simple module $V$ in $\mathscr{C}$, it was shown by Frenkel and Mukhin~\cite{FM01} that the $q$-character of $V$ can be expressed as
\begin{align}\label{Frenkel-Mukhin formula}
\chi_q(V)=m^+(1+\sum_p M_p),
\end{align}
where $m^+\in \mathcal{P}^{+}$ is the highest $l$-weight monomial of $V$, and each $M_p$ is a product of factors $A^{-1}_{i,s}$ with some $i\in I$ and $s\in \mathbb{Z}$. There is a partial order $\leq$ defined on $\mathcal{P}$ as follows:
\begin{align}\label{partial order on weight lattice}
m\leq m'\Leftrightarrow m'm^{-1}~\text{is a monomial generated by } A_{i,s} \text{with } i\in I, s\in\mathbb{Z}.
\end{align}
Then in Equation (\ref{Frenkel-Mukhin formula}), $m^+$ is maximum with respect to $\leq$ among the monomials appearing in $\chi_q(V)$. 
	
\subsection{Category $\mathscr{C}_{\ell}$ and cluster structure on $K_0(\mathscr{C}_{\ell})$} \label{K0Ccluster algebra} 
	
In \cite{HL10,HL16}, Hernandez and Leclerc introduced a series of monoidal subcategories $\mathscr{C}_{\ell}$ of $\mathscr{C}$ with $\ell\in\mathbb{Z}_{\geq0}$. Denote by $\mathcal{P}^{+}_{\ell}$ the submonoid of $\mathcal{P}$ generated by $Y_{i,\xi_i-2k}$, where $i\in I$, $k\in[0,\ell]$, and $\xi_i=-1$ if $i\equiv1 \pmod 2$, $\xi_i=0$ if $i\equiv0 \pmod 2$. An object $V$ in $\mathscr{C}_{\ell}$ is a finite-dimensional $U_q(\widehat{\mathfrak{g}})$-module which satisfies the condition: for every composition factor $S$ of $V$, the highest $l$-weight of $S$ is a monomial in the variables $Y_{i,\xi_i-2k}$ with $i\in I$ and $k\in[0,\ell]$, see \cite{HL10}. Simple modules in $\mathscr{C}_{\ell}$ are of the form $L(M)$, where $M\in\mathcal{P}^{+}_{\ell}$. Denote by $K_{0}(\mathscr{C}_{\ell})$ the Grothendieck ring of $\mathscr{C}_{\ell}$.
		
Following \cite{HL16}, let $Q_{\ell}$ be an iced quiver with the vertex set $V_{\ell}=I\times[1,\ell+1]$, and with arrows:
\begin{align*}
(i,r) & \rightarrow (i-1,r+1), \quad (i,r) \rightarrow (i+1,r+1) \quad \text{if $i \equiv 1 \pmod 2$},\\
(i,r) & \rightarrow (i-1,r), \quad (i,r) \rightarrow (i+1,r) \quad  \text{if $i \equiv 0 \pmod 2$, $r\in [1,\ell]$}, \\
(i,r) & \rightarrow (i,r-1).
\end{align*}
Here the vertices $(i,\ell+1)$ with $i\in I$ are frozen vertices in $Q_{\ell}$.

Let $\mathbf{z}=\{z_{i,r}\mid (i,r)\in V_{\ell}\}$ and let $\mathscr{A}_{\ell}$ be the cluster algebra defined by the initial seed $(\mathbf{z},Q_{\ell})$. For $i \in I$, $s \in \mathbb{Z}$, $k \in \mathbb{Z}_{\geq1}$, let
\[
X^{(s)}_{i,k}=Y_{i,s} Y_{i,s+2} \cdots Y_{i,s+2k-2}.
\]
The modules $L(X^{(s)}_{i,k})$ are called Kirillov-Reshetikhin modules and their equivalence classes $[L(X^{(s)}_{i,k})]$ serve as initial cluster variables. When $k=1$, the modules $L(X^{(s)}_{i,1})=L(Y_{i,s})$ are called fundamental modules. It was shown in \cite{HL10,HL16} that the assignments 
$z_{i,r}\mapsto [L(X^{(\xi_i-2r+2)}_{i,r})]$ with $i\in I$ and $r\in[1,\ell+1]$, extend to a ring isomorphism $\Phi: \mathscr{A}_{\ell}\rightarrow K_{0}(\mathscr{C}_{\ell})$. For example, the initial cluster for $K_{0}(\mathscr{C}_{4})$ in type $A_6$ is shown in Figure \ref{initial seed}. We always read the vertices in $V_\ell$ layer-by-layer.

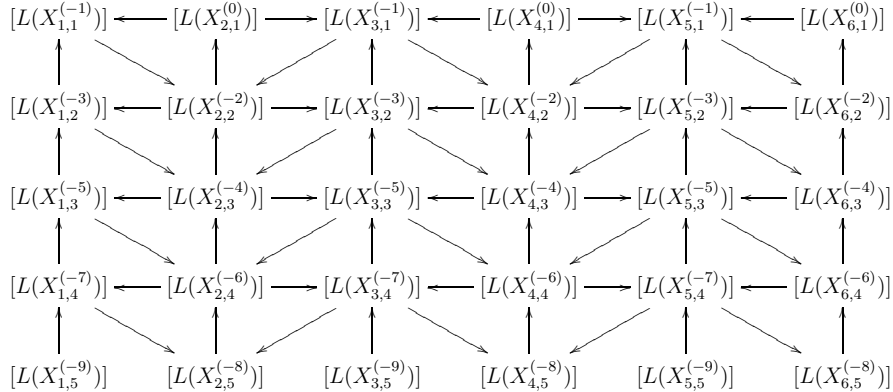
\begin{figure}[H]
\center
\resizebox{12cm}{!}{
\xymatrix{
[L(X^{(-1)}_{1,1})] \ar[rd] & [L(X^{(0)}_{2,1})] \ar[r]\ar[l]& [L(X^{(-1)}_{3,1})]\ar[ld]\ar[rd]& [L(X^{(0)}_{4,1})]  \ar[r]\ar[l] &[L(X^{(-1)}_{5,1})]\ar[rd]\ar[ld]  & [L(X^{(0)}_{6,1})]\ar[l]\\
[L(X^{(-3)}_{1,2})]\ar[rd]\ar[u]  & [L(X^{(-2)}_{2,2})]\ar[r]\ar[l]\ar[u] & [L(X^{(-3)}_{3,2})]\ar[ld]\ar[u] \ar[rd]& [L(X^{(-2)}_{4,2})]\ar[r]\ar[l]\ar[u]& [L(X^{(-3)}_{5,2})]\ar[ld]\ar[u] \ar[rd]& [L(X^{(-2)}_{6,2})] \ar[l]\ar[u] \\
[L(X^{(-5)}_{1,3})]\ar[rd]\ar[u]& [L(X^{(-4)}_{2,3})]\ar[l]\ar[r] \ar[u] &[L(X^{(-5)}_{3,3})]\ar[ld]\ar[u] \ar[rd] & [L(X^{(-4)}_{4,3})] \ar[r]\ar[l]\ar[u]&[L(X^{(-5)}_{5,3})]\ar[ld]\ar[u] \ar[rd] & [L(X^{(-4)}_{6,3})] \ar[l]\ar[u] \\
[L(X^{(-7)}_{1,4})]  \ar[rd]\ar[u]  & [L(X^{(-6)}_{2,4})]\ar[r] \ar[l]\ar[u] &[L(X^{(-7)}_{3,4})] \ar[ld]\ar[u] \ar[rd] & [L(X^{(-6)}_{4,4})]\ar[r]\ar[l]\ar[u]&[L(X^{(-7)}_{5,4})]\ar[ld]\ar[u] \ar[rd] & [L(X^{(-6)}_{6,4})] \ar[l]\ar[u] \\
[L(X^{(-9)}_{1,5})]  \ar[u] & [L(X^{(-8)}_{2,5})]  \ar[u]  &[L(X^{(-9)}_{3,5})] \ar[u] & [L(X^{(-8)}_{4,5})] \ar[u]  &[L(X^{(-9)}_{5,5})] \ar[u]  & [L(X^{(-8)}_{6,5})] \ar[u]
}}
\caption{The initial seed for $K_{0}(\mathscr{C}_{4})$ in type $A_6$.}\label{initial seed}
\end{figure}

Hernandez and Leclerc proposed the following conjectures on $\mathscr{C}_{\ell}$ and $K_{0}(\mathscr{C}_{\ell})$.

\begin{conjecture}[{\cite[Conjecture 13.2]{HL10},\cite[Conjecture 5.2]{HL16},\cite[Conjecture 9.1]{L10}}]\label{conjecture of bijection}
There is a bijection between the cluster monomials (respectively, cluster variables) in $K_{0}(\mathscr{C}_{\ell})$ and real simple (respectively, real prime simple) modules
 in $\mathscr{C}_{\ell}$.
\end{conjecture}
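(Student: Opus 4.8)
This is a conjecture of Hernandez and Leclerc which, in full generality, is still open; below I outline how the two inclusions behind the claimed bijection are approached and where the genuine difficulty lies. One inclusion is essentially in hand. Through the isomorphism $\Phi\colon\mathscr{A}_\ell\rightarrow K_0(\mathscr{C}_\ell)$ identifying the initial cluster variables $z_{i,r}$ with classes of Kirillov-Reshetikhin modules, the monoidal categorification theorems of Qin and of Kang-Kashiwara-Kim-Oh (types ADE) and of Kashiwara-Kim-Oh-Park (types AB) assert precisely that every cluster monomial of $K_0(\mathscr{C}_\ell)$ is the class of a real simple module; a cluster variable, being a cluster monomial not expressible as a product of other cluster monomials, maps to a real simple module that is moreover prime. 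As $\chi_q$ is injective, distinct cluster monomials yield non-isomorphic modules. So one already has an injection of cluster monomials (resp.\ cluster variables) into real simple (resp.\ real prime simple) modules of $\mathscr{C}_\ell$, and only surjectivity is at stake.

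For surjectivity, the plan is constructive. To realize a real prime simple $L(M)$, $M\in\mathcal{P}^{+}_{\ell}$, as a cluster variable, one builds from the initial seed $(\mathbf{z},Q_\ell)$ an explicit finite mutation sequence whose terminal cluster variable has $q$-character $\chi_q(L(M))$ --- exactly the mechanism used in this paper for $\HL$-modules (Theorem \ref{theorem:real and prime}) and for generalized HL-modules (Theorem \ref{generalized HL-modules are cluster variables}), and earlier by Brito-Chari for HL-modules. A general real simple module is then handled by first writing it as a tensor product of real primes and running such a sequence on each factor, reading off the corresponding cluster monomial. The opposite half of the bijection --- that a simple module \emph{not} obtained in this way is necessarily imaginary --- is made plausible by the imaginary modules Brito-Chari exhibited inside tensor products of higher-order Kirillov-Reshetikhin modules.

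The principal obstacle is this surjectivity (equivalently, the converse) direction, aggravated by the absence of finiteness: as noted in the introduction, the cluster algebras attached to $\mathscr{C}_\ell$ are in general not of finite type, so one cannot simply enumerate seeds. What is needed is (i) a uniform classification of the simple objects of $\mathscr{C}_\ell$ together with their prime factorizations, and (ii) for each real prime a canonical mutation path from the initial seed reaching it. The most delicate technical point along any such path is to verify that every exchange relation produced algebraically by mutation is realized by an honest short exact sequence of $U_q(\widehat{\mathfrak{g}})$-modules whose outer or middle term is simple --- that is, that the mutation never cuts across a nontrivial decomposition. It is precisely this that forces one to dispatch tractable subfamilies first --- finite-type cluster subalgebras, then $\HL$-modules, then generalized HL-modules --- before any assault on $\mathscr{C}_\ell$ in full.
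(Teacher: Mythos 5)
The statement is a conjecture of Hernandez and Leclerc; the paper does not prove it, but merely records it with references and later verifies it for generalized HL-modules. Your write-up correctly identifies it as open, and the half you describe as settled (cluster monomials/variables give real simple/real prime simple modules, via Geiss--Leclerc--Schr\"oer, Qin, Kang--Kashiwara--Kim--Oh, and Kashiwara--Kim--Oh--Park) is exactly the content of Lemma~\ref{theorem of real prime} in the paper, while your account of why surjectivity remains out of reach matches the paper's own discussion.
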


Hernandez and Leclerc proved Conjecture \ref{conjecture of bijection} for $\ell=1$ of type $A_n$ and type $D_4$ in \cite{HL10}, and for $\ell=1$ of type $A_n$ and type $D_n$ in \cite{HL13}, as well as Kirillov-Reshetikhin modules of all types in \cite{HL16}. Nakajima proved Conjecture \ref{conjecture of bijection} for type ADE and $\ell=1$ in \cite{N03}. Conjecture \ref{conjecture of bijection} holds for some special real simple modules, for example, minimal affinizations \cite{LQ17,ZDLL16}, snake modules of type AB \cite{DLL20, DS20}, Hernandez-Leclerc modules \cite{BC19}. A variant of Conjecture \ref{conjecture of bijection} for type ADE was proved in \cite{DS23} for the subcategories whose
Grothendieck rings are cluster algebras of finite type. In addition, some important advancements of Conjecture \ref{conjecture of bijection} were achieved by Geiss, Leclerc and Schr\"oer \cite{GLS13}, Qin \cite{Q17}, Kang, Kashiwara, Kim and Oh \cite{KKKO18}, and Kashiwara, Kim, Oh, and Park \cite{KKOP22}, which are summarized as follows.

\begin{lemma}[{\cite[Corollary 8.6]{GLS13},\cite[Theorem 1.2.1]{Q17},\cite[Corollary 7.1.4]{KKKO18},\cite[Theorem 6.10]{KKOP22}}] \label{theorem of real prime}
The modules corresponding to cluster monomials (cluster variables) of $K_{0}(\mathscr{C}_{\ell})$ are real (real prime) simple modules.
\end{lemma}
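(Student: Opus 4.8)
The plan is to recognize this statement as one half of the assertion that the monoidal category $\mathscr{C}_{\ell}$ is a \emph{monoidal categorification} of the cluster algebra $K_0(\mathscr{C}_{\ell})\cong\mathscr{A}_{\ell}$, and to prove it by propagating the properties ``real simple'' (for cluster monomials) and ``real prime simple'' (for cluster variables) along mutations out of the initial seed. Since Hernandez and Leclerc's isomorphism $\Phi$ sends the initial and frozen cluster variables to classes of Kirillov--Reshetikhin modules, and Kirillov--Reshetikhin modules are known to be real and prime, the base case holds; everything then rests on showing that a mutation never leaves the class of a simple module and preserves reality and primeness.

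For the ``real simple'' half I would follow one of two established routes. In the finite cluster type case -- the one relevant to the $\HL$-modules studied later -- one invokes Geiss--Leclerc--Schr\"oer: via the identification of $K_0(\mathscr{C}_{\ell})$ with a quantum coordinate ring carrying a dual canonical basis, every cluster monomial lies in that basis, hence is the class of a simple module, and its square, being again a cluster monomial, is again a basis element, forcing the tensor square of that module to be simple. In general one uses Qin's common triangular basis: pass to the quantum Grothendieck ring, show that $\mathscr{A}_{\ell}^{q}$ admits a common triangular basis containing all quantum cluster monomials, identify this triangular basis with the basis of classes of simple modules (matching leading terms under the dominance order on $l$-weight monomials), and conclude as before. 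Alternatively, the Kang--Kashiwara--Kim--Oh approach transports the question through a generalized Schur--Weyl duality to modules over symmetric quiver Hecke algebras, where a mutation is categorified by a short exact sequence assembled from renormalized $R$-matrices, and one proves inductively that each new cluster variable is a real simple module.

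Primeness is then the claim that, at each mutation step, the module $M_k'$ produced by the exchange relation $[M_k][M_k']=\prod_{i\to k}[M_i]+\prod_{k\to j}[M_j]$ is prime. Assuming inductively that the cluster variables of the current seed are real, prime, and pairwise ``commute'' (all their pairwise tensor products being simple), one uses the $R$-matrix invariants $\Lambda$ and $\mathfrak{d}$ to show that $M_k'$ is real simple, that the updated family still commutes, and that a nontrivial tensor factorization of $M_k'$ would contradict the value of $\Lambda(M_k,M_k')$ dictated by the exchange relation; in triangular-basis language, a cluster variable is a basis element whose $g$-vector is too extremal to be the sum of the $g$-vectors of two nontrivial basis elements.

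The hard part is exactly this inductive control of mutation -- proving that the element created by the cluster exchange relation is the class of a \emph{single} simple module, and that this module is prime. This is the deep content of the theorems of Geiss--Leclerc--Schr\"oer, Qin, Kang--Kashiwara--Kim--Oh, and Kashiwara--Kim--Oh--Park quoted in the statement, so the proof here consists of checking that $K_0(\mathscr{C}_{\ell})$ (of type $A$, hence of finite or acyclic type) falls within their hypotheses and invoking them.
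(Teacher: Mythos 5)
The paper offers no proof of this lemma at all: it is imported verbatim from the cited works of Geiss--Leclerc--Schr\"oer, Qin, Kang--Kashiwara--Kim--Oh and Kashiwara--Kim--Oh--Park, which is precisely what your proposal reduces to (verify that $K_0(\mathscr{C}_\ell)$, of simply-laced type $A$, satisfies their hypotheses and invoke their theorems), so your approach is essentially the same as the paper's. One minor misattribution: \cite{GLS13} (Corollary 8.6) enters through factoriality of cluster algebras---cluster variables are irreducible elements of the factorial ring $K_0(\mathscr{C}_\ell)$, which yields primeness of the corresponding simple modules---rather than through a dual-canonical-basis argument, but since the substance is carried by the quoted theorems this does not affect correctness.
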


Conjecture \ref{conjecture of bijection} predicts that the converse of Lemma \ref{theorem of real prime} is also true. However it is very hard to classify the real simple modules in $\mathscr{C}_{\ell}$. Lapid and M\'inguez \cite{LM18} classified real simple modules satisfying a certain regularity condition in type A. Recently, in \cite{DS23}, Duan and Schiffler classified all real simple modules in the subcategories whose Grothendieck rings are cluster algebras of finite type.

Following \cite{HL21}, let $Q$ be a quiver with the vertex set $I$, and $M$ a representation of $Q$ over $\mathbb{C}$. We denote by $\mathbf{e}=(e_i)_{i\in I}\in \mathbb{N}^{I}$ a dimension vector. The variety $Gr(\mathbf{e},M)$ is a quiver Grassmannian. 

\begin{definition}[{\cite[Definition 5.1]{HL21}}]
The F-polynomial of the representation $M$ of $Q$ is
\[
F_{M}(\mathbf{v}):=\Sigma_{\mathbf{e}\in\mathbb{N}^{I}}\chi(Gr(\mathbf{e},M))\mathbf{v}^{\mathbf{e}},
\]
where $\mathbf{v}:=(v_i)_{i\in I}$ is a sequence of commutative variables, $\mathbf{v}^{\mathbf{e}}:=\prod_{i}v_{i}^{e_i}$, and $\chi(Gr(\mathbf{e},M))$ 
denotes the Euler characteristic of the variety $Gr(\mathbf{e},M)$.
\end{definition}

Let $\Gamma^-$ be a quiver with vertex set $\Gamma_{0}^{-}=\{(i,r)\mid i\in I, r<0, i\equiv r+1 \pmod 2 \}$ and arrows $(i, r)\rightarrow (j, s)$ if $c_{ij}\neq0$ and $s= r+c_{ij}$, which is introduced in \cite{HL16}. 
A potential $P$ is the formal sum of all oriented $3$-cycles in $\Gamma^-$. Let $R$ be the set of all cyclic derivatives of $P$ and let $J$ denote be the two-sided ideal of the path algebra $\mathbb{C}\Gamma^{-}$ generated by $R$.  Following \cite{DWZ08, HL16}, one can define the Jacobian algebra $A=\mathbb{C}\Gamma^{-}\slash J$. 

Let $I_{i, r}$ be the injective $A$-module at the vertex $(i, r)$ with $(i, r)\in \Gamma_{0}^{-}$. Assume that $m$ is a dominant monomial in $Y_{i,r}$, with $(i,r)\in W^{-}=\{(i,a+1)\mid (i,a)\in\Gamma_{0}^{-}\}$. It follows from \cite{HL16} that $m$ can be rewritten as 
\[
m:=\prod\limits_{(i,r)\in W^{-}}z_{i,r}^{g_{i,r}(m)},
\]
where $g_{i, r}(m)\in\mathbb{Z}$, 
$z_{i, r} = Y_{i, r}Y_{i, r+2} \cdots Y_{i,\xi_i}$.

Define 
\[
I(m)^{+} = \bigoplus_{g_{i, r}(m)>0} I_{i, r-1}^{\oplus g_{i, r}(m)}, \quad
I(m)^{-} = \bigoplus_{g_{i, r}(m)<0} I_{i, r-1}^{\oplus \mid g_{i, r}(m)\mid}.
\]
Let $K(m)$ be the kernel of a generic $A$-module homomorphism from $I(m)^{-}$ to $I(m)^{+}$.

\begin{conjecture}[{\cite[Conjecture 5.3]{HL16}}]\label{conjecture of geometric q-character formula}
Suppose that $L(m)$ is a real simple module in $\mathscr{C}_{\ell}$. Then the truncated $q$-character of $L(m)$ is equal to
\[
\chi_q^{-}(L(m))=mF_{K(m)}.
\]
\end{conjecture}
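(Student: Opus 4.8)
The plan is to reduce the statement to a computation of Euler characteristics of quiver Grassmannians attached to the Jacobian algebra $A=\mathbb{C}\Gamma^{-}/J$, and to match them against an explicit combinatorial formula for the truncated $q$-character. First I would observe that, by Lemma \ref{theorem of real prime} together with the isomorphism $\Phi\colon\mathscr{A}_{\ell}\to K_{0}(\mathscr{C}_{\ell})$, the real simple module $L(m)$ corresponds to a cluster monomial, so its class is reached from the initial seed by a finite sequence of mutations; the Derksen--Weyman--Zelevinsky separation formula then expresses this cluster variable as a product of frozen variables times $F$ evaluated at the appropriate $\hat y$-variables, where $F$ is the $F$-polynomial of the decorated representation of $(\Gamma^{-},P)$ produced by that mutation sequence. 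The content of Conjecture \ref{conjecture of geometric q-character formula} is then twofold: that this decorated representation is exactly $(K(m),0)$, the generic kernel of $I(m)^{-}\to I(m)^{+}$ with no coefficient part, and that under the substitution dictated by $\chi_q$ the $F$-polynomial specializes to $F_{K(m)}(\mathbf{v})$ with $v_{i,r}$ the monomial in $Y_{i,s}$ recorded before the statement.

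The first key step is to \emph{identify the decorated representation}. I would track along the mutation sequences constructed in Section \ref{section of mutation sequence} and revised in Section \ref{section of generalizedHL} the $g$-vectors and $c$-vectors, and use the description of the exponents $g_{i,r}(m)$ recalled above to show that the representation-theoretic datum at the end of the sequence has dimension vector forcing it to be $K(m)$ itself, with trivial coefficient part. Here one uses that $\Gamma^{-}$ with the sum-of-$3$-cycles potential $P$ is nondegenerate, so \cite{DWZ08} applies; since $L(m)$ is assumed real, the corresponding cluster variable is a genuine cluster variable whose $g$-vector class contains a \emph{rigid} representation, and rigidity pins down the generic kernel $K(m)$ uniquely. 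That the multiplicities of the injectives $I_{i,r-1}$ in $I(m)^{\pm}$ are $|g_{i,r}(m)|$ makes this matching essentially bookkeeping once the mutation sequence is in hand.

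The second key step is the \emph{comparison of truncated $q$-characters with $F_{K(m)}$}, which I would do by induction on the length of the highest $l$-weight monomial $m$. The base case consists of fundamental and Kirillov--Reshetikhin modules, where $K(m)$ is an injective $A$-module and $F_{K(m)}$ is already computed in \cite{HL16,HL21}, and one checks directly against the (truncated) Frenkel--Mukhin formula. For the inductive step, a one-step mutation in $\mathscr{A}(\mathbf{x},\Q)$ gives an exchange relation from Corollary \ref{prop:mutation equations}, which on the $U_q(\widehat{\mathfrak g})$-side is an identity of $q$-characters and on the $A$-side is a short exact sequence of modules expressing $K(m)$ in terms of smaller $K(m')$'s; combining the recursive $q$-character formulas (Proposition \ref{prop:equation system} and its analogue in Section \ref{section of generalizedHL}) with the behaviour of $F$-polynomials under such short exact sequences yields the desired equality $\chi_q^{-}(L(m))=mF_{K(m)}$. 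The explicit path description of $\chi_q$ for HL-modules, extended to $\HL$- and generalized HL-modules, lets one read each monomial of $\chi_q^{-}(L(m))$ as a lattice point and thereby see the two indexing sets as an Euler-characteristic-preserving decomposition.

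The main obstacle is the non-finite-type situation of Section \ref{section of generalizedHL}: the exchange graph of $\mathscr{A}(\mathbf{x},\Q)$ is infinite, so one cannot lean on a finite classification and must instead verify \emph{uniformly} that at each mutable vertex the local arrow configuration (the typical case of which is treated in Appendix \ref{appendix}) produces exactly the exchange relation matching the $A$-module short exact sequence, and that genericity of $K(m)$ persists along all reachable mutations. I expect the most delicate point to be controlling the frozen variables $\mathbf{f}'_i,\mathbf{f}''_i$: they enter the cluster expansion but must cancel out of the truncated $q$-character, so the separation formula has to be carefully renormalized so that the specialized $F$-polynomial is genuinely $F_{K(m)}(\mathbf{v})$ in the variables of the statement and not a frozen-variable multiple of it.
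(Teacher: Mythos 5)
The statement you are asked about is a \emph{conjecture} ([HL16, Conjecture 5.3]); the paper does not prove it in general and does not claim to. What the paper actually establishes is the special case of generalized HL-modules, and it does so in one line: by Theorem \ref{generalized HL-modules are cluster variables} these modules correspond to cluster variables, and by \cite[Section 5.5.4]{HL21} the geometric $q$-character formula is already known to hold for real simple modules corresponding to cluster monomials. No new argument about $F$-polynomials, decorated representations, or quiver Grassmannians is carried out in this paper.

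Your proposal has a genuine gap at its very first step, and it is fatal for the statement as written. You invoke Lemma \ref{theorem of real prime} to conclude that the real simple module $L(m)$ corresponds to a cluster monomial and is therefore reachable from the initial seed by a finite mutation sequence. But Lemma \ref{theorem of real prime} goes in the opposite direction: it says cluster monomials give real simple modules. The implication you need --- every real simple module in $\mathscr{C}_\ell$ is a cluster monomial --- is precisely the open Conjecture \ref{conjecture of bijection} (the paper stresses that ``Conjecture \ref{conjecture of bijection} predicts that the converse of Lemma \ref{theorem of real prime} is also true'' and that classifying real simple modules is very hard). Without that converse, the DWZ separation formula, the identification of the decorated representation with $(K(m),0)$, and everything downstream never get off the ground for an arbitrary real simple $L(m)$. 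This is why the statement remains a conjecture: it is known (by \cite{HL21}) exactly on the class of modules reachable by mutation, and open beyond it.

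If you restrict your argument to generalized HL-modules --- the only class for which this paper asserts the formula --- then your sketch is essentially a re-derivation of \cite[Section 5.5.4]{HL21}, which the paper cites rather than reproves. In that restricted setting your outline is reasonable in spirit, though it is considerably heavier than what the paper does (the paper never needs to identify the decorated representation along its mutation sequences, nor to track $g$- and $c$-vectors); the honest statement of the result is ``Conjecture \ref{conjecture of geometric q-character formula} holds for generalized HL-modules,'' not the conjecture itself.
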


Hernandez and Leclerc proved Conjecture \ref{conjecture of geometric q-character formula} for Kirillov-Reshetikhin modules of all types in \cite{HL16}. If  the equivalences of real simple modules are cluster monomials (equivalently, Conjecture \ref{conjecture of bijection} holds), then it follows from \cite[Section 5.5.4]{HL21} that Conjecture \ref{conjecture of geometric q-character formula} is true. Some real simple modules, for example, snake modules of types A and B \cite{DS20}, HL-modules \cite{BC19, DS23, GDL22}, and minimal affinizations of type $G_2$ \cite{TDL23}, have been verified.

\subsection{Hernandez-Leclerc modules of type $A_n$}\label{section:recall HL-module}
			
Now we recall the definition of Hernandez-Leclerc modules of type $A_n$ in \cite{BC19} and their highest $l$-weight monomials \cite{DS23}. It is often convenient to abbreviate Hernandez-Leclerc modules with HL-modules.
			
Recall that $I=\{1,2,\dots,n\}$. Let $\xi: I \to \mathbb{Z}$ be a height function subject to $\vert \xi(i)-\xi(i-1)\vert =1$ for $2\leq i\leq n$. It will be convenient to extend $\xi$ to $[0, n+1]$ by defining $\xi(0)=\xi(2)$ and $\xi(n+1)=\xi(n-1)$. Define $\omega(i,j)\in \mathcal{P^+}$ as follows:
\begin{align}\label{Brito-Chari notation}
\omega(i,j)=Y_{i_1,a_1} Y_{i_2,a_2} \cdots Y_{i_k,a_k},
\end{align}
where $\{i_1,i_2,\dots,i_k\} \subset I$ such that $i=i_1<i_2<\dots<i_k=j$, $i_{2}<\dots<i_{k-1}$ is an ordered enumeration of the subset $\{\ell \mid i< \ell <j, \ \xi(\ell-1)=\xi(\ell+1)\}$, $a_\ell=\xi(i_\ell) \pm1$ if $\xi(i_\ell)= \xi(i_\ell-1) \pm 1$ for $\ell \geq 2$, and $a_{1}=\xi(i)\pm1$ if $\xi(i)=\xi(i+1)\pm 1$.
			
Let $Pr_{\xi}=\{Y_{i,\xi(i)\pm1}\mid i\in I\}\cup\{\omega(i,j)\mid 1\leq i<j \leq n \}$, and $\mathbf{f}=\{\mathbf{f}_i \mid i\in I\}$, where $\mathbf{f}_i =Y_{i,\xi(i)+1}Y_{i,\xi(i)-1}$. A simple module $L(m)$ for $m\in \mathbf{Pr}_{\xi}\cup\mathbf{f}$ is called an HL-module \cite{BC19}.
			
Following \cite{BC19}, for $i\in [1,n-2]$, let $i_{\diamond}\in [i,n]$ be minimal such that $i_{\diamond}\geq i$, $\xi(i_{\diamond})=\xi(i_{\diamond}+2),$ and set $i_{\diamond}=n-1$ if $i_{\diamond}$ dose not exist in $[i,n-2]$. We  adopt the convention that $(n-1)_{\diamond}=(n-1)$, $n_{\diamond}=n$. For $1\leq j\leq 1_{\diamond}$, set $j_{\bullet}=0$ and for $j>1_{\diamond}$, let $j_{\bullet} \in [1_{\diamond},j)$ be maximal such that $j_{\bullet}=(j_{\bullet})_\diamond$.
			
The next lemma gives the highest $l$-weight monomials of HL-modules.
			
\begin{lemma}[{\cite[Theorem 3.1]{DLL2021},\cite[Theorem 7.2]{DS23}}]  \label{def:HL}
An HL-module corresponding to a cluster variable (excluding frozen variables) is a simple $U_q(\widehat{\mathfrak{g}})$-module with the highest $l$-weight monomial
\begin{equation} \label{equation: HL}
Y_{i_1,a_1}Y_{i_2,a_2}\dots Y_{i_k,a_k},
\end{equation}
where $k\in\mathbb{Z}_{\geq 1}$, $i_j\in I, a_{j}\in \mathbb{Z}$ for $j=1,2,\dots,k$, and
\begin{enumerate}
\item[(1)]  $i_1<i_2<\dots<i_k$,
\item[(2)]  $(a_{j}-a_{j-1})(a_{j+1}-a_{j})<0$ for $2\leq j\leq k-1$,
\item[(3)]  $\lvert a_{j}-a_{j-1}\rvert=i_{j}-i_{j-1}+2$ for $2\leq j\leq k$.
\end{enumerate}
\end{lemma}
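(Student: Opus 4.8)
The statement is recorded in \cite[Theorem 3.1]{DLL2021} and \cite[Theorem 7.2]{DS23}, and the plan is to reconstruct it by making the combinatorial translation explicit. The input I would take from \cite{BC19} is that the cluster algebra of \cite{BC19} associated with $\xi$ is of finite type and its non-frozen cluster variables are exactly the classes $[L(m)]$ with $m\in Pr_{\xi}=\{Y_{i,\xi(i)\pm1}\mid i\in I\}\cup\{\omega(i,j)\mid 1\le i<j\le n\}$; so it suffices to check that every $m\in Pr_{\xi}$ has the shape $(\ref{equation: HL})$ with conditions (1)--(3), the converse being obtained by reversing the construction below. The monomials $Y_{i,\xi(i)\pm1}$ are the $k=1$ case, where (1)--(3) are empty, so I would treat $m=\omega(i,j)=Y_{i_1,a_1}\dots Y_{i_k,a_k}$ with $i_1=i<i_2<\dots<i_k=j$. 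First I would read off the shape of $\xi$ on $[i,j]$ from the $i_\ell$: by $(\ref{Brito-Chari notation})$ the indices $i_2<\dots<i_{k-1}$ enumerate the points $\ell\in(i,j)$ with $\xi(\ell-1)=\xi(\ell+1)$, and since $|\xi(\ell)-\xi(\ell-1)|=1$ throughout, each such $\ell$ is a strict local extremum of $\xi$ while no interior point of a segment $[i_{s-1},i_s]$ ($2\le s\le k$) is one. Hence $\xi$ is strictly monotone on each $[i_{s-1},i_s]$, so $|\xi(i_s)-\xi(i_{s-1})|=i_s-i_{s-1}$, and (1) is immediate.

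To obtain (2) and (3) I would combine this with the sign rules defining the $a_\ell$. Fix $2\le s\le k$ and suppose $\xi$ increases on $[i_{s-1},i_s]$ (the decreasing case being symmetric). Then $a_{s-1}=\xi(i_{s-1})-1$: if $s-1=1$ this is the rule for $a_1$ applied to $\xi(i)=\xi(i+1)-1$, and if $s-1\ge2$ then $i_{s-1}$ is a local minimum, so $\xi(i_{s-1})=\xi(i_{s-1}-1)-1$ and the rule for $a_\ell$ with $\ell\ge2$ applies. Similarly $a_s=\xi(i_s)+1$ whether $s=k$ or $s<k$ (in the latter case $i_s$ is a local maximum). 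Therefore
\[
a_s-a_{s-1}=\xi(i_s)-\xi(i_{s-1})+2=(i_s-i_{s-1})+2>0,
\]
and symmetrically $a_s-a_{s-1}=-(i_s-i_{s-1})-2<0$ when $\xi$ decreases on $[i_{s-1},i_s]$. In both cases $|a_s-a_{s-1}|=i_s-i_{s-1}+2$, which is (3); and since the sign of $a_s-a_{s-1}$ is the direction of monotonicity of $\xi$ on $[i_{s-1},i_s]$, which reverses across each interior extremum $i_s$ with $2\le s\le k-1$, the products $(a_s-a_{s-1})(a_{s+1}-a_s)$ are negative, which is (2). The converse is the same computation run backwards: given $i_1<\dots<i_k$ and $a_1,\dots,a_k$ with (2)--(3), one lets $\xi$ on $[i_1,i_k]$ be piecewise linear with slope $+1$ or $-1$ on $[i_{s-1},i_s]$ according to the sign of $a_s-a_{s-1}$ (well defined and alternating by (2), with jump $i_s-i_{s-1}$ by (3)), fixes the base value $\xi(i_1)$ so that $a_1$ is recovered, extends $\xi$ to $I$ arbitrarily, and checks that $\{i_2,\dots,i_{k-1}\}$ are precisely its interior turning points in $(i_1,i_k)$.

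I do not expect a serious obstacle beyond the single imported fact that the non-frozen cluster variables of this cluster algebra are exactly the $[L(m)]$, $m\in Pr_{\xi}$; once that is granted, everything above is elementary bookkeeping about monotone integer-valued functions. The only place where I would be careful is at the boundary indices $s-1=1$ and $s=k$, where $i_{s-1}$ and $i_k$ need not be turning points of $\xi$ and the extremum analysis has to be replaced by the separate rules in $(\ref{Brito-Chari notation})$ for $a_1$ and for $a_k$.
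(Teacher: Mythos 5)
The paper itself offers no proof of this lemma: it is imported verbatim from \cite[Theorem 3.1]{DLL2021} and \cite[Theorem 7.2]{DS23}, so there is no internal argument to compare yours against. Judged on its own terms, your reconstruction is sound. Modulo the one external input you flag (that the non-frozen cluster variables are exactly the classes $[L(m)]$ with $m\in Pr_{\xi}$ --- which is really the content of the cited results of \cite{BC19,DLL2021,DS23,GDL22} rather than something you re-derive), the combinatorial translation is correct: the interior $i_2<\dots<i_{k-1}$ are precisely the turning points of $\xi$ in $(i,j)$, $\xi$ is strictly monotone of slope $\pm1$ on each $[i_{s-1},i_s]$, and your case analysis of the sign rules for $a_{s-1}$ and $a_s$ (local minimum/maximum for interior indices, the separate rules for $a_1$ and for $\ell\ge 2$ at the endpoints) gives $a_s-a_{s-1}=\pm\bigl((i_s-i_{s-1})+2\bigr)$ with sign equal to the direction of monotonicity, hence (2) and (3). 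Two points are worth making explicit, and you essentially do: first, conditions (1)--(3) with arbitrary $a_j\in\mathbb{Z}$ describe the union of the sets $Pr_{\xi}$ over all height functions, so the converse genuinely requires constructing $\xi$ from the data, which your piecewise-linear construction does correctly (the slopes alternate by (2), the base value recovers $a_1$, the values at the $i_s$ are recovered by (3), and no extra turning points appear inside $(i_1,i_k)$ since the slope is constant on each open segment); second, the frozen monomials $\mathbf{f}_i=Y_{i,\xi(i)+1}Y_{i,\xi(i)-1}$ fall outside the description (1) because of the repeated index, consistent with their exclusion in the statement. I see no gap beyond the imported classification, which is acceptable given that the lemma is itself a quoted result.
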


\section{Cluster algebra $\mathscr{A}(\mathbf{x}, \Q)$}\label{The construction of the cluster algebra}
In this section, we define a cluster algebra of type $A_n$, and then give its some exchange relations. 
			
\subsection{The quiver $\Q$ and cluster algebra $\mathscr{A}(\mathbf{x},\Q)$}\label{section:definition of cluster algebra}
Let $\xi$ be a height function. Inspired by \cite{BC19}, we define a quiver $\Q$ with $3n$ vertices labeled $\{1',\dots,n',1,\dots,n,1'',\dots,n''\}$ and with the set of edges given as follows:
\begin{itemize}
\item[(1)]  the vertices $\{1',\dots,n',1'',\dots,n''\}$ are frozen vertices, there are no arrows between frozen vertices;

\item[(2)]  if $1\leq j\leq n-1$ and $\xi(j)=\xi(j+1)-1$, the arrows at $j$ are:
\begin{align*}
\xymatrix{
  &j'\ar[d]  & & (j+1)'\\
 j-1 & j\ar[l]\ar@/^10pt/[rr]_{\delta_{j,j_{\diamond}}} \ar[drr]_{1-\delta_{j,j_{\diamond}}}\ar[urr]^{1-\delta_{j,j_{\diamond}}}& & j+1 \ar@/^6pt/[ll]^{\quad 1-\delta_{j,j_{\diamond}}} \\
  &j''\ar[u]  & & (j+1)''
}
\end{align*}
and reverse the orientations of arrows if $\xi(j)=\xi(j+1)+1$, where $\delta_{i,j}$ is the Kronecker delta function, and we use the convention that a labeled arrow exists if and only if the label is $1$;  

\item[(3)]  if $\xi(n-1)=\xi(n)+1$, the arrows at $n$ are:
\begin{align*}
\xymatrix{
  &n'\ar[d]  \\
 (n-1) & n\ar[l]  \\
  &n''\ar[u]
}
\end{align*}
and reverse the orientations of arrows if $\xi(n-1)=\xi(n)-1$.
\end{itemize}
It is easy to see that if we ignore the frozen vertices, then $j$ is a sink or source of $\Q$ if and only if $j=1$ or $j=j_\diamond$.
			
\begin{example}
Assume that $\xi$ is defined as follows.
\[
\begin{tabular}{|c|c|c|c|c|c|c|}
\hline
$i$ &  $1$ & $2$  & $3$  & $4$  & $5$  & $6$  \\
\hline
$\xi(i)$ & $-3$ & $-2$  & $-3$  & $-4$  & $-5$  & $ -4$ \\
\hline
\end{tabular}
\]
Then the quiver $\Q$ is shown in Figure \ref{quiver of example}.
\begin{figure}[H]
\centerline{
\xymatrix{
1' \ar[d] & 2' & 3'\ar[ld]& 4' \ar[ld] &5'\ar[d] & 6' \\
1\ar[r]& 2\ar[u]\ar[d]\ar[r] & 3\ar[u]\ar[d]\ar[r]& 4\ar[u]\ar[d]& 5\ar[l]\ar[r]& 6\ar[u]\ar[d]\\
1''\ar[u]& 2''& 3''\ar[lu]& 4''\ar[lu]& 5''\ar[u]& 6''\\
}}
\caption{The quiver $\Q$ associated with $\xi$.}\label{quiver of example}
\end{figure}
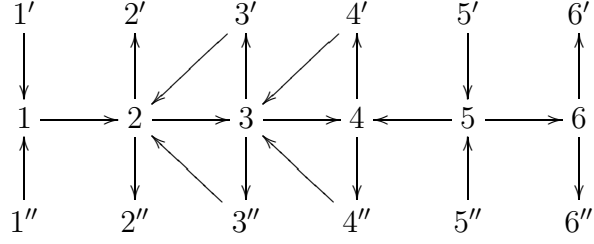
\end{example}
			
Let $\mathbf{x}=\{\mathbf{f}'_{1},\dots,\mathbf{f}'_{n},\mathbf{x}_1,\dots,\mathbf{x}_n,\mathbf{f}''_{1},\dots,\mathbf{f} ''_{n}\}$ be a set of algebraically independent variables and $\mathcal{F}$ the field of rational functions generated by $\mathbf{x}$.
			
Let $\mathscr{A}(\mathbf{x},\Q)$ be a cluster algebra with coefficients and with initial seed $(\mathbf{x},\Q)$. By definition, the principal part of $\Q$ is a Dynkin quiver of type $A_n$. We denote by $\{\alpha_i \mid 1\leq i\leq n\}$ the set of simple roots in type $A_n$, and let $\alpha_{i,j}=\alpha_i+\dots+\alpha_j$ for $1\leq i\leq j$. Then $\Phi_{\geq -1} = \{-\alpha_i, \, \alpha_{i,j} \mid 1\leq i\leq j\leq n\}$ is the set of almost positive roots in type $A_n$. Following \cite{FZ03}, the cluster variables in $\mathscr{A}(\mathbf{x},\Q)$ are parameterized by $\Phi_{\geq-1}$.  Cluster variables and frozen variables in $\mathscr{A}(\mathbf{x},\Q)$ are denoted
\[
\{\mathbf{x}_i:=\mathbf{x}[-\alpha_i],\mathbf{x}[\alpha_{i,j}],\mathbf{f}'_{i},\mathbf{f}''_{i} \mid 1\leq i\leq j\leq n\}.
\]
Moreover, the cluster variable $\mathbf{x}[\alpha_{i,j}]$ is obtained from the initial seed by performing a mutation sequence $i, i+1, \dots, j$. In particular, $\mathbf{x}[\alpha_{i,i}]$ is obtained from the initial seed by mutating at vertex $i$.
			
\subsection{Some exchange relations}
In this subsection, we give some exchange relations in $\mathscr{A}(\mathbf{x},\Q)$, which are useful in the sequel.
			
For $i,j\in[1,n]$, let $\Q[i,j]=\Q$ if $j<i$ and let $\Q[i,j]$ be the quiver obtained from $\Q$ by mutating the sequence $i,i+1,\dots,j$ if $j\geq i$. Let $d_{j}=\delta_{j,j_\diamond}$ for $1\leq j\leq n$. In the following, we describe the arrows incident to $j$ in $Q_\xi[i,j-1]$.
			
\begin{lemma}\label{arrows accident with j}
Suppose that $j>i$ and there is an arrow $j-1\rightarrow j$ in $\Q$. Then the arrows incident to $j$ in $\Q[i,j-1]$ are shown as follows:
\[
\xymatrix{
& \max\{i,j_{\bullet}\}'\ar[drr]^{b_{j}}  &  &j' &(j+1)' \ar[dl]_{1-d_{j}} \\
\max\{i-1,j_{\bullet}-1\} \ar@/^1pc/[rrr]^{a_{j}}& & j-1 & j\ar[u]^{d_{j-1}} \ar[l]\ar@/^/[r]^{\quad 1-d_{j}}\ar[d]_{d_{j-1}} &  j+1 \ar@/^/[l]^{d_{j}} \\
& \max\{i,j_{\bullet}\}''\ar[urr]_{b_{j}}  &  & j''  & (j+1)''\ar[ul]^{1-d_{j}}}
\]
where $a_j=1-\delta_{i,j_{\bullet}},$ $b_{j}=\min\{1,(1-\delta_{j_{\bullet},i_{\bullet}})d_{j_{\bullet}-1}+\delta_{j_{\bullet},i}\}$. Reversing the orientation of every arrow if there is an arrow $j-1\leftarrow j$ in $\Q$.
\end{lemma}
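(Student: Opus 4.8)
The plan is to prove Lemma \ref{arrows accident with j} by induction on the length $j-i$ of the mutation sequence, tracking how the local picture around vertices $j-1$, $j$, $j+1$ and their attached frozen vertices evolves under the mutations $\mu_i, \mu_{i+1}, \dots, \mu_{j-1}$. The base case is $j = i+1$: here $\Q[i,j-1] = \Q[i,i] = \mu_i(\Q)$, and one reads off the arrows incident to $j = i+1$ directly from the defining local pictures of $\Q$ in items (2)--(3) of Section \ref{section:definition of cluster algebra}, applying the three mutation rules at vertex $i$. One must check the claimed formulas $a_j = 1 - \delta_{i, j_\bullet}$ and $b_j = \min\{1, (1-\delta_{j_\bullet, i_\bullet}) d_{j_\bullet - 1} + \delta_{j_\bullet, i}\}$ reduce correctly in this case (note when $j = i+1$ we have $j_\bullet \le i$, so typically $j_\bullet = i$ or $j_\bullet < i$, making these Kronecker deltas easy to evaluate).

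For the inductive step, I would assume the stated local picture around vertex $j-1$ in $\Q[i, j-2]$ (the analogous statement with $j$ replaced by $j-1$), then apply $\mu_{j-1}$ and verify that the local picture around $j$ in $\Q[i,j-1] = \mu_{j-1}(\Q[i,j-2])$ matches the claim. The key structural facts to exploit are: (i) the principal part of $\Q$ is a type $A_n$ Dynkin quiver, so vertex $j$ is adjacent in the principal part only to $j-1$ and $j+1$, hence mutating at $j-1$ only affects arrows through the path segments $\ast \to j-1 \to j$ and $\ast \to j-1 \leftarrow$(frozen); (ii) mutation rule (1) forbids creating arrows between two frozen vertices, which is exactly why the frozen-to-frozen arrows predicted at $\max\{i, j_\bullet\}'$, $\max\{i,j_\bullet\}''$ versus $j', j''$ never appear and instead we get the arrows $\max\{i,j_\bullet\}' \to j-1$ and $j-1 \to \max\{i,j_\bullet\}''$ (or their reverses); (iii) the sink/source observation at the end of Section \ref{section:definition of cluster algebra} — that $j$ is a sink or source of the principal part of $\Q$ iff $j = 1$ or $j = j_\diamond$ — controls the orientation flips and is encoded through $d_{j-1} = \delta_{j-1, (j-1)_\diamond}$. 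One then performs the bookkeeping: the arrow $j-1 \to j$ (present by hypothesis) becomes $j \to j-1$ after $\mu_{j-1}$; any 2-arrow path $w \to j-1 \to j$ produces $w \to j$; oriented 2-cycles so created are cancelled; the vertical arrows $j \to j'$ or $j' \to j$ survive with multiplicity $d_{j-1}$ precisely when $j-1$ is a sink/source and thus carries the vertical arrows that compose through it, etc. The definitions of $i_\bullet$, $j_\bullet$, $i_\diamond$, $j_\diamond$ and the combinatorial identities relating $(j-1)_\bullet$ to $j_\bullet$ (e.g., whether $j-1 > 1_\diamond$, whether $(j-1)_\bullet = (j-1)$ etc.) are what make the recursion for $a_j$ and $b_j$ close up.

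The main obstacle, I expect, is the careful case analysis for $b_j$, the multiplicity of the arrows between $\max\{i, j_\bullet\}'$, $\max\{i,j_\bullet\}''$ and $j-1$: one has to distinguish whether $j_\bullet = i$ (so the relevant frozen vertices coincide with $i', i''$ and were involved from the very first mutation $\mu_i$), whether $j_\bullet > i$ with $j_\bullet = i_\bullet$ (a degenerate coincidence forcing $b_j$ down to make the min nontrivial), and the generic case $j_\bullet > i$, $j_\bullet \ne i_\bullet$ where $b_j = d_{j_\bullet - 1}$. Each branch requires verifying that the arrows created through the chain of mutations $\mu_i, \dots, \mu_{j-1}$ have the right multiplicity after all 2-cycle cancellations, and this is where hidden cancellations or doubled arrows could arise. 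Because the fully general verification is long and the branching is delicate, I would prove one representative branch in detail in Appendix \ref{appendix} (as the paper indicates is done) and indicate that the remaining branches follow by entirely analogous, if tedious, computations. The statement "reverse all orientations if there is an arrow $j-1 \leftarrow j$ in $\Q$" is handled automatically by the symmetry of mutation under reversing all arrows of the quiver, so it requires no separate argument.
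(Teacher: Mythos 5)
Your proposal follows essentially the same route as the paper: induction on $j-i$, with the base case read off from $\mu_i(Q_\xi)$ and the inductive step obtained by applying one further mutation to the local picture around the previous vertex, splitting into cases according to whether $d$ of the relevant vertex is $0$ or $1$ (equivalently how $j_\bullet$ relates to $i$ and $i_\bullet$), and disposing of the reversed-orientation case by symmetry. The only small discrepancy is organizational: the paper carries out all branches of this lemma in the main text (its appendix treats a typical case of a different, later mutation-sequence lemma), but this does not affect the correctness of your argument.
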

			
\begin{proof}
We prove the case that there is an arrow $j-1\rightarrow j$ in $\Q$ by induction on $j-i$, the other is similar.
				
Suppose that $j-i=1$. We need to consider two cases: $d_i=1$ or $d_i=0$. In the former case, we have $j_\bullet=(i+1)_\bullet=i$, and so $a_j=0$, $b_{j}=1$. The mutation of $Q_\xi$ at $i$ is shown as follows:
 \[
\xymatrix{
&i'\ar[d]& (i+1)'&  (i+2)'\ar[ld]_{1-d_{i+1}}~~&~~  i'\ar[rd]& (i+1)'& (i+2)'\ar[ld]_{1-d_{i+1}}& \\
i-1 &i\ar[r]\ar[l]&  i+1 \ar[u]\ar[d]\ar@/^/[r]^{\quad \quad 1-d_{i+1}}&  i+2 \ar@/^/[l]^{d_{i+1}}  ~~\ar[r]^{~~\quad\mu_{i}}&~~  i& i+1  \ar[u]\ar[d]\ar[l]\ar@/^/[r]^{\quad\quad  1-d_{i+1}}&  i+2 \ar@/^/[l]^{d_{i+1}}&\\
&i''\ar[u]& (i+1)''&(i+2)'' \ar[lu]^{1-d_{i+1}} ~~&~~  i''\ar[ru]& (i+1)''& (i+2)''\ar[lu]^{1-d_{i+1}}&
}
\]
The arrows incident to $i+1$ in $Q_\xi[i,i]$ are as required.
				
In the latter case, we have $(i+1)_\bullet<i$, $j_\bullet=(i+1)_\bullet=i_{\bullet}$, and so $a_j=1$, $b_{j}=0$. The mutation of $Q_\xi$ at $i$ is shown as follows:
\[
\xymatrix{
&i'& (i+1)'\ar[ld]&  (i+2)'\ar[ld]_{1-d_{i+1}}~~&~~   &  &  & (i+2)'\ar[ld]_{1-d_{i+1}} \\
i-1 \ar[r]&i\ar[d]\ar[u]\ar[r]& i+1 \ar[u]\ar[d]\ar@/^/[r]^{\quad \quad 1-d_{i+1}}&   i+2 \ar@/^/[l]^{d_{i+1}}  ~~\ar[r]^{\mu_{i}}&~~   i-1 \ar@/^{2pc}/[rr]&i&  i+1  \ar[l]\ar@/^/[r]^{\quad \quad 1-d_{i+1}}&  i+2 \ar@/^/[l]^{d_{i+1}}\\
&i''& (i+1)''\ar[lu]&(i+2)'' \ar[lu]^{1-d_{i+1}} ~~&~~   &  & & (i+2)'' \ar[lu]^{1-d_{i+1}}
}
\]
The arrows incident to $i+1$ in $Q_{\xi}[i,i]$ are as required.
				
Suppose that $i+1<j<n$, and the arrows incident to $j$ in $\Q[i,j-1]$ are the required arrows. By induction, we need to prove that the result holds for the arrows incident to $j+1$ in $\Q[i,j]$. We consider two cases: $d_j=1$ or $d_j=0$.
In the former case, we have $j=j_{\diamond}$, $(j+1)_{\bullet}=j$, and then
\begin{align*}
& \max\{i-1,(j+1)_\bullet-1\}=j-1, \quad \max\{i,(j+1)_{\bullet}\}=j, \\
& a_{j+1}=1-\delta_{i,j}=1, \quad b_{j+1}=d_{j-1}.
\end{align*}
By induction the arrows incident to $j$, $j+1$ are shown as follows:
\[
\xymatrix{
&\max\{i,j_{\bullet}\}'\ar[drr]^{b_j}&&j' & (j+1)'\ar[d]&  (j+2)'\\
\max\{i-1,j_{\bullet}-1\}\ar@/^1.1pc/[rrr]^{a_j}&& j-1 &j\ar[l]\ar[d]^{d_{j-1}}\ar[u]_{d_{j-1}}&  j+1 \ar[l] \ar@/^/[r]^{d_{j+1}}\ar[ru]^{1-d_{j+1}}\ar[rd]_{1-d_{j+1}}&  j+2 \ar@/^/[l]^{\quad 1-d_{j+1}}  \\
& \max\{i,j_{\bullet}\}''\ar[urr]_{b_j}& & j''& (j+1)''\ar[u]&(j+2)''
}
\]
After mutating $j$, the arrows incident to $j+1$ are shown as follows:
\[
\xymatrix{
&j' & (j+1)'\ar[d]&  (j+2)'\\
j-1 &j\ar[r] & j+1 \ar[ld]^{d_{j-1}} \ar[lu]_{d_{j-1}} \ar[ru]^{1-d_{j+1}} \ar[rd]_{1-d_{j+1}} \ar@/^{1.5pc}/[ll] \ar@/^/[r]^{d_{j+1}} &  j+2 \ar@/^/[l]^{\quad 1-d_{j+1}} \\
&j'' & (j+1)''\ar[u]&(j+2)''
}
\]
Note that $j \leftarrow j+1$ in $Q_\xi$. The arrows incident to $j+1$ in $Q_\xi[i,j]$ are as required.
				
In the latter case, we have $j\neq j_{\diamond}$, $j_{\bullet}=(j+1)_{\bullet}<j$, and then
\begin{align*}
& \max\{i-1, (j+1)_{\bullet}-1\}=\max\{i-1,j_{\bullet}-1\}, \quad  \max\{i,(j+1)_{\bullet}\}= \max\{i,j_{\bullet}\}, \\
& a_{j+1}=a_j, \quad b_{j+1}=b_j.
\end{align*}
By induction the arrows incident to $j$, $j+1$ are shown as follows:
\[
\xymatrix{
& \max\{i,j_{\bullet}\}'\ar[drr]^{b_j}&&j' & (j+1)' \ar[ld]&  (j+2)'\ar[ld]_{1-d_{j+1}}\\
\max\{i-1,j_{\bullet}-1\}\ar@/^1.1pc/[rrr]^{a_j}&& j-1 &j\ar[l]\ar[d]\ar[u]\ar[r]& j+1 \ar[d]\ar[u]\ar@/^/[r]^{\quad \quad 1-d_{j+1}}& j+2 \ar@/^/[l]^{d_{j+1}}  \\
& \max\{i,j_{\bullet}\}''\ar[urr]_{b_j}& & j''& (j+1)'' \ar[lu]&(j+2)''\ar[lu]^{1-d_{j+1}}
}
\]
After mutating $j$, the arrows incident to $j+1$ are shown as follows:
\[
\xymatrix{
& \max\{i,j_{\bullet}\}'\ar[drrr]^{b_j}&&  & &  (j+2)'\ar[ld]_{1-d_{j+1}}\\
\max\{i-1,j_{\bullet}-1\}\ar@/^1pc/[rrrr]^{a_j}&& &j  &\ar[l] j+1 \ar@/^/[r]^{\quad \quad 1-d_{j+1}}&  j+2 \ar@/^/[l]^{d_{j+1}}  \\
& \max\{i,j_{\bullet}\}''\ar[urrr]_{b_j}& &  &  &(j+2)''\ar[lu]^{1-d_{j+1}}
}
\]	
Note that  $j\rightarrow j+1$ in $Q_\xi$. The arrows incident to $j+1$ in $Q_\xi[i,j]$ are as required.
				
By the principle of induction, the assertion is true. The proof is completed.
\end{proof}
			
By Lemma \ref{arrows accident with j}, we have the following exchange relations in $\mathscr{A}(\mathbf{x},\Q)$.
			
\begin{corollary}\label{prop:mutation equations0}
Let $1\leq i<j\leq n$. Then
\begin{align*} 
\mathbf{x}_i \mathbf{x}[\alpha_{i,i}]=&\mathbf{f}'_i \mathbf{f}''_i \mathbf{x}^{1-d_i}_{i+1}+\mathbf{x}_{i-1} \mathbf{f}'^{1-d_i}_{i+1} \mathbf{f}''^{1-d_i}_{i+1}  \mathbf{x}^{d_i}_{i+1},\\
\begin{split} 
\mathbf{x}_j \mathbf{x}[\alpha_{i,j}]=  & \mathbf{f}'^{1-d_j}_{j+1} \mathbf{f}''^{1-d_j}_{j+1} \mathbf{x}^{d_j}_{j+1} 
\big( \mathbf{f}'^{\delta_{i,j_{\bullet}}}_{i} \mathbf{f}''^{\delta_{i,j_{\bullet}}}_{i} \mathbf{x}^{\delta_{i_\bullet,j_{\bullet}}}_{i-1} +(1-\delta_{i_\bullet,j_\bullet}-\delta_{i,j_{\bullet}}) \mathbf{f}'^{d_{j_\bullet-1}}_{j_\bullet}\mathbf{f}''^{d_{j_\bullet-1}}_{j_\bullet} \mathbf{x}[\alpha_{i,j_\bullet-1}]
\big)+\\
&\mathbf{x}[\alpha_{i,j-1}] {\mathbf{f}'_j}^{d_{j-1}}  \mathbf{f}''^{d_{j-1}}_{j}  \mathbf{x}^{1-d_{j}}_{j+1}.
\end{split}
\end{align*}
\end{corollary}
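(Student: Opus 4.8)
The plan is to read off both identities from the one-step cluster exchange relation $x_kx_k'=\prod_{i\to k}x_i+\prod_{k\to j}x_j$ applied in the appropriate seed; the only inputs needed are the description of $\Q$ in Section~\ref{section:definition of cluster algebra} and the arrow list of Lemma~\ref{arrows accident with j}.

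For the first relation, $\mathbf{x}[\alpha_{i,i}]$ is by definition the variable exchanged when the initial seed $(\mathbf{x},\Q)$ is mutated at $i$, so I would read off the arrows at $i$ directly from the definition of $\Q$: the frozen vertices $i'$, $i''$ are always attached to $i$, and the configuration among $\{i-1,i,i+1,(i+1)',(i+1)''\}$ is governed by $d_i=\delta_{i,i_\diamond}$, all orientations being reversed when $\xi(i)=\xi(i+1)+1$ (with item~(3) of the definition covering $i=n$). Since the exchange relation only involves the \emph{products} of the two half-neighbourhoods of $i$, this reversal is immaterial: the arrows into $i$ always contribute $\mathbf{f}'_i\mathbf{f}''_i\mathbf{x}_{i+1}^{1-d_i}$ and those out of $i$ contribute $\mathbf{x}_{i-1}\mathbf{x}_{i+1}^{d_i}\mathbf{f}'^{1-d_i}_{i+1}\mathbf{f}''^{1-d_i}_{i+1}$ (conventions $\mathbf{x}_0=\mathbf{x}_{n+1}=1$), and their sum is the asserted formula.

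For the second relation, recall that $\mathbf{x}[\alpha_{i,j}]$ is obtained by mutating along $i,i+1,\dots,j$; hence it is the variable $x_j'$ exchanged when one mutates at $j$ in the seed $\mu_{j-1}\cdots\mu_i(\mathbf{x},\Q)$, whose quiver is $\Q[i,j-1]$. Since a mutation at one vertex never changes the variable at a different vertex, the extended cluster of this seed carries $\mathbf{x}[\alpha_{i,k}]$ at each already-mutated vertex $k\in[i,j-1]$ (in particular $\mathbf{x}[\alpha_{i,j-1}]$ at $j-1$) and the initial variables at every other vertex (in particular $\mathbf{x}_j$, $\mathbf{x}_{j+1}$, $\mathbf{x}_{i-1}$ and the $\mathbf{f}'_\bullet$, $\mathbf{f}''_\bullet$). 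Feeding the arrow list of Lemma~\ref{arrows accident with j} into the exchange relation, the arrows leaving $j$ contribute $\mathbf{x}[\alpha_{i,j-1}]\mathbf{f}'^{d_{j-1}}_{j}\mathbf{f}''^{d_{j-1}}_{j}\mathbf{x}_{j+1}^{1-d_j}$, which is exactly the last summand of the claim, while those entering $j$ contribute $\mathbf{f}'^{1-d_j}_{j+1}\mathbf{f}''^{1-d_j}_{j+1}\mathbf{x}_{j+1}^{d_j}$ times $x_{\max\{i-1,j_\bullet-1\}}^{a_j}\mathbf{f}'^{b_j}_{\max\{i,j_\bullet\}}\mathbf{f}''^{b_j}_{\max\{i,j_\bullet\}}$. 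Evaluating this last monomial is the step that needs care: a short check on the definitions of $\bullet$ and $\diamond$ (using $i_\bullet\le j_\bullet$ and that $j_\bullet$, when nonzero, is fixed by $\diamond$) splits into three cases. If $i=j_\bullet$ then $a_j=0$, $b_j=1$, $\max\{i,j_\bullet\}=i$, giving $\mathbf{f}'_i\mathbf{f}''_i$; if $i_\bullet=j_\bullet\neq i$ then $a_j=1$, $b_j=0$, $\max\{i-1,j_\bullet-1\}=i-1<i$, giving $\mathbf{x}_{i-1}$; otherwise $i<j_\bullet$, so $a_j=1$, $b_j=d_{j_\bullet-1}$, $\max\{i,j_\bullet\}=j_\bullet$ and $\max\{i-1,j_\bullet-1\}=j_\bullet-1\in[i,j-1]$, whence that vertex carries $\mathbf{x}[\alpha_{i,j_\bullet-1}]$ and the monomial is $\mathbf{f}'^{d_{j_\bullet-1}}_{j_\bullet}\mathbf{f}''^{d_{j_\bullet-1}}_{j_\bullet}\mathbf{x}[\alpha_{i,j_\bullet-1}]$. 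These are exactly the three monomials selected by the Kronecker-delta exponents in the bracket of the statement (read so that exactly one of the two written summands is active), and the same product-symmetry as in the first relation handles the case $\xi(j-1)=\xi(j)+1$. I expect the only real difficulty to be organizational: tracking which cluster variable occupies each of $\max\{i-1,j_\bullet-1\}$, $j-1$, $j+1$ in $\Q[i,j-1]$, and checking that the boundary conventions ($j=n$, $i=1$, $j_\bullet=0$) render harmless every occurrence of an otherwise-undefined symbol such as $\mathbf{x}_0$, $\mathbf{x}_{n+1}$, or $\mathbf{x}[\alpha_{i,-1}]$.
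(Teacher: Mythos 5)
Your proposal is correct and follows essentially the same route as the paper, which derives the corollary simply by applying the exchange relation $x_kx_k'=\prod_{i\to k}x_i+\prod_{k\to j}x_j$ at vertex $i$ in $(\mathbf{x},\Q)$ and at vertex $j$ in the seed with quiver $\Q[i,j-1]$ described by Lemma~\ref{arrows accident with j}. Your case analysis translating $a_j$, $b_j$, and $\max\{i-1,j_\bullet-1\}$, $\max\{i,j_\bullet\}$ into the Kronecker-delta form (with exactly one bracket summand active, and the boundary conventions $\mathbf{x}_0=\mathbf{x}_{n+1}=1$) is exactly the computation the paper leaves implicit.
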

			
\section{$\HL$-modules}\label{The study on HLr modules}
In this section we define and study a family of simple $U_q(\widehat{\mathfrak{sl}_{n+1}})$-modules, and call them $\HL$-modules.
			
\subsection{Definition of $\HL$-modules}

\begin{definition}\label{definition of GHL}

An $\HL$-module is a simple $U_q(\widehat{\mathfrak{sl}_{n+1}})$-module with the highest $l$-weight monomial
\begin{align}\label{Monomial of HLr}
(Y_{i_1,a_1}Y_{i_2,a_2}\dots Y_{i_k,a_k})_r:=\prod^{k}_{s=1} X^{(a_s)}_{i_s,r}=\prod^{k}_{s=1}Y_{i_s,a_s} Y_{i_s,a_s+2} \cdots Y_{i_s,a_s+2r-2},
\end{align}
where $k,r\in\mathbb{Z}_{\geq 1}$, $i_j\in I, a_{j}\in \mathbb{Z}$ for $j=1,2,\dots,k$, and
\begin{enumerate}
\item[(1)]  $i_1<i_2<\dots<i_k$,
\item[(2)]  $(a_{j}-a_{j-1})(a_{j+1}-a_{j})<0$ for $2\leq j\leq k-1$,
\item[(3)]  $\lvert a_{j}-a_{j-1}\rvert=i_{j}-i_{j-1}+2$ for $2\leq j\leq k$.
\end{enumerate}
\end{definition}
			
Comparing Lemma \ref{def:HL} with Definition \ref{definition of GHL}, HL-modules are $\HL$-modules. When $r=1$, $\HL$-modules coincide with HL-modules, and hence we view $\HL$-modules as a generalization of HL-modules. It is obvious that Kirillov-Reshetikhin modules are $\HL$-modules; furthermore, an $\HL$-module is a Kirillov-Reshetikhin module if and only if $k=1$. It is easy to see that an $\HL$-module is a prime snake module if and only if  $k=1$ or $k=2$ and $r=1$. The $\HL$-modules are not same as snake modules, for example, when $k=3$ and $r=2$, the module $L(Y_{2,-6}  Y_{2,-4}  Y_{4,-2}  Y_{4,0}  Y_{6,-6}  Y_{6,-4})$ is an $\HL$-module, but not a (prime) snake module; $L(Y_{1,-7}  Y_{2,-4}  Y_{3,-1})$ is a (prime) snake module, but not an $\HL$-module.

\subsection{Mutation sequences}\label{section of mutation sequence}
Let $\mathscr{A}(\mathbf{x},Q_\xi)$ be the cluster algebra with initial seed $(\mathbf{x},Q_\xi)$ defined in Section \ref{The construction of the cluster algebra}. Fix a positive integer $r$, we choose a large enough $\ell$. 
			
The initial quiver $Q_\ell$ of $K_0(\mathscr{C}_{\ell})$ as a cluster algebra is defined in Section \ref{K0Ccluster algebra}. In the following, for any height function $\xi$, we define a mutation sequence such that after mutating the sequence on $Q_\ell$, a subquiver with vertices at the $(r-1)$th, $r$th, and $(r+1)$th rows of the resulting quiver is the same as our quiver $Q_\xi$ if the vertices at the $(r-1)$th and $(r+1)$th rows are viewed as frozen vertices. In particular, vertices at the $(r-1)$th (respectively, $r$th, $(r+1)$th) row of the resulting quiver correspond precisely to the vertices $1',\dots,n'$ (respectively, $1,\dots,n$, $1'',\dots,n''$) of $Q_\xi$.
			
We use $(i,k)$ to denote the $k$th vertex in $i$th column of $Q_\ell$, counting from the top. Let
\[
s_{k,1}=\{(1,k), (3,k),(5,k),\dots\} \cap V_{\ell},\quad s_{k,2}=\{(2,k), (4,k),(6,k),\dots\} \cap V_{\ell}.
\]
We define an infinite sequence as follows.
\begin{align}\label{sequence of vertices of G-}
\begin{split}
& s_{1,2}, \\
&s_{2,2}, s_{1,1}\\
&s_{3,2}, s_{2,1}, s_{1,2},\\
&s_{4,2}, s_{3,1}, s_{2,2},s_{1,1},\\
&s_{5,2}, s_{4,1}, s_{3,2},s_{2,1},s_{1,2},\\
& \cdots  \quad  \cdots \quad  \cdots.
\end{split}
\end{align}
			
Let $\xi$ be a height function and $r\geq1$. We define $\mathscr{S}$ to be the finite sequence taken the first $p$ rows of (\ref{sequence of vertices of G-}), with  $p\in\mathbb{Z}_{\geq1}$, where $p$ is large enough and satisfies that
\begin{equation*}
\begin{cases}
p\equiv n\pmod 2 & \text{if $\xi(n-1)>\xi(n)$, $r$ is odd, or $\xi(n-1)<\xi(n)$, $r$ is even}, \\
p\equiv n-1\pmod 2 & \text{if $\xi(n-1)>\xi(n)$,  $r$ is even, or $\xi(n-1)<\xi(n)$, $r$ is odd}.
\end{cases}
\end{equation*}
Let $\{j_1,\dots,j_s\}\subset I$ such that $j_1<j_2<\dots<j_s$ and $d_{j_1}=d_{j_2}=\dots=d_{j_s}=0$, where $d_i=\delta_{i,i_\diamond}$ for $i\in I$. For $1\leq t\leq s$, $j_t\equiv 0 \pmod 2$, let
\begin{align*}
\mathscr{S}_t= \{&\dots,s_{r-3,2},s_{r-1,2},s_{r+1,2},s_{r+3,2},\dots,\\
&\dots,s_{r-4,1},s_{r-2,1},s_{r,1},s_{r+2,1},\dots
\} \cap  ([1,j_t] \times \mathbb{Z}_{\geq1})\cap V_{\ell}.
\end{align*}
For $1\leq t\leq s$, $j_t\equiv 1 \pmod 2$, let
\begin{align*}
\mathscr{S}_t= \{&\dots,s_{r-3,1},s_{r-1,1},s_{r+1,1},s_{r+3,1},\dots,\\
&\dots,s_{r-4,2},s_{r-2,2},s_{r,2},s_{r+2,2},\dots
\} \cap  ([1,j_t] \times \mathbb{Z}_{\geq1})\cap V_{\ell}.
\end{align*}
			
Following \cite{KNS94}, the $T$-system equations in type $A_n$ are of the following form
\begin{align*}
T^{(i)}_{k,r+1} T^{(i)}_{k,r-1}  = T^{(i)}_{k-1,r+1} T^{(i)}_{k+1,r-1} + \prod_{j: c_{ij}=-1} T^{(j)}_{k,r},
\end{align*}
where $i\in I$, $k\geq 1$, $r\in \mathbb{Z}$, and $T^{(i)}_{0,r}=1$.
			
Mutating the sequence $\mathscr{S}$ in the initial seed of $K_0(\mathscr{C}_{\ell})$, then all of the exchange relations among cluster variables in the process of mutations are from $T$-system equations, denote the resulting seed by $\mu_{\mathscr{S}}(\seed)$. Then the seed $\mu_{\mathscr{S}}(\seed)$ is shown in Figure \ref{figure obtained 1117} or Figure \ref{figure obtained 11172} up to spectral parameter. 			
\begin{figure}
\centerline{
\resizebox{12cm}{!}{
\xymatrix{
[L(X^{(-2)}_{1,1})]  \ar[d] &  [L(X^{(-1)}_{2,1})] \ar[l] \ar[r]& [L(X^{(-2)}_{3,1})] \ar[d] &  [L(X^{(-1)}_{4,1})] \ar[r]\ar[l] & [L(X^{(-2)}_{5,1})] \ar[d] &  [L(X^{(-1)}_{6,1})] \ar[l]\ar[r] &\cdots \\
[L(X^{(-2)}_{1,2})] \ar[r]  &  [L(X^{(-3)}_{2,2})]  \ar[d]\ar[u] & [L(X^{(-2)}_{3,2})] \ar[l]\ar[r] & [L( X^{(-3)}_{4,2} )]\ar[d]\ar[u]  &  [L(X^{(-2)}_{5,2})]  \ar[l]\ar[r]&  [L(X^{(-3)}_{6,2})] \ar[d]\ar[u]&\cdots\ar[l]\\
[L(X^{(-4)}_{1,3})] \ar[d]\ar[u]&  [L(X^{(-3)}_{2,3})] \ar[l]\ar[r]  & [L(X^{(-4)}_{3,3})]  \ar[u]\ar[d] &  [L(X^{(-3)}_{4,3})] \ar[l]\ar[r] & [L(X^{(-4)}_{5,3})] \ar[d]\ar[u] & [L(X^{(-3)}_{6,3} )] \ar[l]\ar[r]&\cdots\\
[L(X^{(-4)}_{1,4})] \ar[r] &  [L(X^{(-5)}_{2,4})] \ar[d]\ar[u] & [L(X^{(-4)}_{3,4})]  \ar[l]\ar[r] & [L(X^{(-5)}_{4,4})] \ar[u]\ar[d] & [L(X^{(-4)}_{5,4})] \ar[l]\ar[r] & [L(X^{(-5)}_{6,4})] \ar[d]\ar[u] &\cdots\ar[l]\\
[L(X^{(-6)}_{1,5})] \ar[d]\ar[u] & [L(X^{(-5)}_{2,5})] \ar[l]\ar[r]  & [L(X^{(-6)}_{3,5})] \ar[d]\ar[u] & [L(X^{(-5)}_{4,5})]\ar[l]\ar[r]   & [L(X^{(-6)}_{5,5})] \ar[d]\ar[u]& [L( X^{(-5)}_{6,5})] \ar[r]\ar[l] &\cdots\\
\vdots  & \vdots\ar[u]  & \vdots & \vdots \ar[u]  & \vdots & \vdots\ar[u] &\cdots\\
}
}}
\caption{The seed $\mu_{\mathscr{S}}(\seed)$.}\label{figure obtained 1117}
\end{figure}
			
\begin{figure}
\centerline{
\resizebox{12cm}{!}{
\xymatrix{
[L(X^{(-1)}_{1,1})]  \ar[r] & [L(X^{(-2)}_{2,1})]  \ar[d] & [L(X^{(-1)}_{3,1})] \ar[r]\ar[l] &  [L(X^{(-2)}_{4,1})] \ar[d]  &  [L(X^{(-1)}_{5,1})] \ar[r]\ar[l] &  [L(X^{(-2)}_{6,1})] \ar[d] &\cdots\ar[l] \\
[L(X^{(-3)}_{1,2})] \ar[d]\ar[u]  & [L(X^{(-2)}_{2,2})] \ar[l]\ar[r] &  [L(X^{(-3)}_{3,2})] \ar[d]\ar[u]&  [L(X^{(-2)}_{4,2})] \ar[l]\ar[r] &  [L(X^{(-3)}_{5,2})] \ar[d]\ar[u]&  [L(X^{(-2)}_{6,2})] \ar[r]\ar[l] &\cdots \\
[L(X^{(-3)}_{1,3})] \ar[r] &  [L(X^{(-4)}_{2,3})] \ar[d]\ar[u] & [L(X^{(-3)}_{3,3})]  \ar[l]\ar[r]  &  [L(X^{(-4)}_{4,3})] \ar[u]\ar[d] & [L(X^{(-3)}_{5,3})] \ar[l]\ar[r] &  [L(X^{(-4)}_{6,3})]  \ar[d]\ar[u]&\cdots\ar[l]\\
[L(X^{(-5)}_{1,4})] \ar[d]\ar[u]& [L(X^{(-4)}_{2,4})] \ar[l]\ar[r] & [L(X^{(-5)}_{3,4})] \ar[u]\ar[d] &  [L(X^{(-4)}_{4,4})] \ar[l]\ar[r] & [L(X^{(-5)}_{5,4})] \ar[d]\ar[u]& [L(X^{(-4)}_{6,4})]  \ar[l]\ar[r]  &\cdots \\
[L(X^{(-5)}_{1,5})] \ar[r] & [L(X^{(-6)}_{2,5} )]\ar[d]\ar[u]  & [L(X^{(-5)}_{3,5})] \ar[l]\ar[r]& [L(X^{(-6)}_{4,5})] \ar[d]\ar[u] & [L(X^{(-5)}_{5,5})] \ar[l]\ar[r] & [L(X^{(-6)}_{6,5})] \ar[d]\ar[u] &\cdots \ar[l] \\
\vdots\ar[u]   & \vdots & \vdots\ar[u] & \vdots   & \vdots\ar[u] & \vdots &\cdots\\
}
}}
\caption{The seed $\mu_{\mathscr{S}}(\seed)$.}\label{figure obtained 11172}
\end{figure}

Let $(\mathbf{x},Q)$ be a seed of rank $n$. Following \cite{FWZ17}, \textit{freezing} at the index $i$ of a cluster variable $x_i$ is a transformation of the seed that reclassifies $i$ and $x_i$ as frozen, and accordingly removes all arrows connecting frozen vertices to each other. Let $I \cup J$ be a partition of the index set of $\mathbf{x}$ such that there are no arrows connecting the intersection of $I$ and the index set of cluster variables in $\mathbf{x}$ with $J$ to each other. The pair $(\mathbf{x}_I, Q_I)$ is called a \textit{restricted seed} of $(\mathbf{x},Q)$, where $\mathbf{x}_I = (x_i)_{i\in I}$, and $Q_I$ is a full subquiver of $Q$ with vertex set $I$.
 
\begin{definition}[{\cite[Definition 4.2.6]{FWZ17}}]\label{Definition of FWZ3}
Let $\Sigma$ be a seed. Freeze some subset of cluster variables to obtain a seed $\Sigma'$. Next, restrict $\Sigma'$ to obtain a restricted seed $\Sigma''$. We then say that the seed pattern defined by $\Sigma''$ is a \textit{seed subpattern} of the seed pattern defined by $\Sigma$; and the cluster algebra associated to $\Sigma''$ is a \textit{cluster subalgebra} of the cluster algebra associated to $\Sigma$.
\end{definition} 

We further mutate the seed $\mu_{\mathscr{S}}(\seed)$ using the sequences $\mathscr{S}_s, \mathscr{S}_{s-1}, \ldots, \mathscr{S}_1$ in order, and denote the new obtained seed by 
\begin{align} \label{the new obtained seed before HLr}
\mathscr{S}_m(\seed):=\mu_{\mathscr{S}_1} \circ \cdots \circ \mu_{\mathscr{S}_s} \circ \mu_{\mathscr{S}}(\seed).
\end{align}
The following theorem describes the subseed indexed by vertices at $(r-1)$th, $r$th, $(r+1)$th rows of the quiver in $\mathscr{S}_m(\seed)$.

\begin{theorem}\label{mutation equivalent}
The cluster algebra $\mathscr{A}(\mathbf{x},\Q)$ is isomorphic to a cluster subalgebra of $K_0(\mathscr{C}_\ell)$.
\end{theorem}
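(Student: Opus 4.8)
The plan is to exhibit $\mathscr{A}(\mathbf{x},\Q)$ concretely as the cluster subalgebra of $K_0(\mathscr{C}_\ell)\cong\mathscr{A}_\ell$ obtained by the freezing-plus-restriction procedure of Definition~\ref{Definition of FWZ3}, applied to the seed $\mathscr{S}_m(\seed)$ of~(\ref{the new obtained seed before HLr}). Concretely, I would first carry out the mutation sequence $\mathscr{S}$ on the initial seed of $K_0(\mathscr{C}_\ell)$ and verify, using the $T$-system equations of \cite{KNS94} recalled above, that every exchange relation encountered is a $T$-system relation, so the resulting cluster variables are the Kirillov--Reshetikhin classes displayed in Figure~\ref{figure obtained 1117} (when $\xi(n-1)>\xi(n)$ with $r$ odd, or $\xi(n-1)<\xi(n)$ with $r$ even) or Figure~\ref{figure obtained 11172} (in the complementary parity cases). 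This requires a careful bookkeeping of spectral parameters along the diagonals of the grid $V_\ell$, but is a routine induction on the rows of~(\ref{sequence of vertices of G-}); the parity condition on $p$ guarantees that the boundary column $n$ terminates with the correct orientation.

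Next I would analyze the effect of the further mutations $\mu_{\mathscr{S}_1}\circ\cdots\circ\mu_{\mathscr{S}_s}$ on the subquiver formed by rows $r-1$, $r$, $r+1$ of $\mu_{\mathscr{S}}(\seed)$. The sequences $\mathscr{S}_t$ are tailored so that mutating at the vertices in columns $1,\dots,j_t$ (with $j_t$ ranging over the indices with $d_{j_t}=0$) reorganizes exactly the arrows that distinguish the Hernandez--Leclerc quiver $Q_\ell$ from the Brito--Chari-type quiver $\Q$. The key claim is that after all these mutations, the full subquiver on rows $r-1,r,r+1$ — with rows $r-1$ and $r+1$ reclassified as frozen — is isomorphic to $\Q$, with the dictionary: vertex in row $r-1$ (resp.\ $r$, resp.\ $r+1$) and column $i$ corresponds to $i'$ (resp.\ $i$, resp.\ $i''$) of $\Q$. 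To verify this it suffices to check that at each mutable vertex $i$ the incident arrows match those prescribed in Section~\ref{section:definition of cluster algebra}, distinguishing the two cases $\xi(i)=\xi(i+1)\mp 1$ and the Kronecker-delta labels $\delta_{i,i_\diamond}$; one representative local computation of this matching is deferred to Appendix~\ref{appendix}, and the remaining cases are analogous. Once the quiver match is established, there are no arrows between rows $\le r-2$ and rows $\ge r$ other than through the frozen rows $r-1,r+1$ (and symmetrically at the bottom), so the partition of $V_\ell$ into $I=(\text{rows }r-1,r,r+1)$ and $J=$ the rest satisfies the hypothesis of Definition~\ref{Definition of FWZ3}; hence $\Q$ together with its cluster is a restricted seed of $\mathscr{S}_m(\seed)$.

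Finally, since $\mathscr{S}_m(\seed)$ is mutation-equivalent to the initial seed of $K_0(\mathscr{C}_\ell)$, the restricted seed $(\mathbf{x}',\Q)$ — where $\mathbf{x}'$ is the tuple of Kirillov--Reshetikhin classes sitting in rows $r-1,r,r+1$ of $\mathscr{S}_m(\seed)$ — generates a cluster subalgebra of $K_0(\mathscr{C}_\ell)$ in the sense of Definition~\ref{Definition of FWZ3}. Mapping the formal initial variables $\mathbf{f}'_i,\mathbf{x}_i,\mathbf{f}''_i$ of $\mathscr{A}(\mathbf{x},\Q)$ to the corresponding entries of $\mathbf{x}'$ gives an algebra isomorphism $\mathscr{A}(\mathbf{x},\Q)\xrightarrow{\ \sim\ }$ this cluster subalgebra, because the two cluster algebras now have identical initial seeds (same quiver, algebraically independent initial clusters of the same size). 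I expect the main obstacle to be the second step: tracking precisely how the sequences $\mathscr{S}_t$ reshape the arrows around each mutable vertex and confirming — over all the orientation/parity cases controlled by the height function $\xi$ and the invariants $i_\diamond$, $i_\bullet$ — that the outcome is exactly $\Q$ and not merely a quiver mutation-equivalent to it; the local arrow computation in the appendix is the crux, and the exchange relations of Corollary~\ref{prop:mutation equations0} (equivalently Lemma~\ref{arrows accident with j}) are what pin down the match on the $\mathscr{A}(\mathbf{x},\Q)$ side.
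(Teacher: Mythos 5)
Your proposal follows essentially the same route as the paper's proof: mutate along $\mathscr{S}$ (all exchange relations being $T$-system relations) to reach the source--sink configuration of Figures \ref{figure obtained 1117}--\ref{figure obtained 11172}, then along $\mathscr{S}_s,\dots,\mathscr{S}_1$ to adjust the local picture on rows $r-1$, $r$, $r+1$ to the given height function, and finally freeze rows $r-1$ and $r+1$, restrict, and invoke Definition \ref{Definition of FWZ3} to obtain the cluster subalgebra and the isomorphism. The one small inaccuracy is that Appendix \ref{appendix} actually treats a local arrow computation for the mutation sequence of Section \ref{section of generalizedHL} rather than for this theorem, but this does not change the substance of the argument.
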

			
\begin{proof}
Recall that in \cite{KNS94}, the $T$-system equations in type $A_n$ are of the following form
\begin{equation}\label{equ:T-system equation}
T^{(i)}_{k,r+1} T^{(i)}_{k,r-1}  = T^{(i)}_{k-1,r+1} T^{(i)}_{k+1,r-1} + \prod_{j: c_{ij}=-1} T^{(j)}_{k,r},
\end{equation}
where $i\in I$, $k\geq 1$, $r\in \mathbb{Z}$, and $T^{(i)}_{0,r}=1$.
All of the exchange relations among the cluster variables in the aforementioned mutation sequence are from $T$-system equations with respect to the initial seed of $\seed$.
				
It follows from Figures \ref{figure obtained 1117} and \ref{figure obtained 11172} that the seed $\mu_{\mathscr{S}}(\seed)$ is shown in Figure \ref{seed, fist} up to spectral parameter. In this case, the local seed of $\mathscr{S}_m(\seed)$ corresponds to that of $\mathscr{A}(\mathbf{x}, Q_\xi)$, where $\xi$ is a source-sink height function.
\begin{figure}
\center
\resizebox{14cm}{!}{
\xymatrix{
\cdots&\vdots\ar[d]&\vdots&\vdots\ar[d]&\vdots&\cdots\\
\cdots & [L((Y_{j-2,\xi(j-2)+3})_{r-2})] \ar[r] \ar[l]&  [L((Y_{j-1,\xi(j-1)+1})_{r-2})]\ar[d]\ar[u] & [L((Y_{j,\xi(j)+3})_{r-2})] \ar[l]\ar[r] & [L((Y_{j+1,\xi(j+1)+1})_{r-2})] \ar[d]\ar[u] & \cdots\ar[l] \\
\cdots\ar[r] & [L((Y_{j-2,\xi(j-2)+1})_{r-1})]\ar[u]\ar[d] & [L((Y_{j-1,\xi(j-1)+1})_{r-1})] \ar[r]\ar[l] &  [L((Y_{j,\xi(j)+1})_{r-1})]   \ar[u]\ar[d]   & [L((Y_{j+1,\xi(j+1)+1})_{r-1})] \ar[l]\ar[r] & \cdots\\
\cdots & [L((Y_{j-2,\xi(j-1)})_r)]\ar[l]\ar[r] & [L((Y_{j-1,\xi(j)})_r)] \ar[u] \ar[d] & [L((Y_{j,\xi(j+1)})_r)]  \ar[r] \ar[l] & [L((Y_{j+1,\xi(j+2)})_r)] \ar[u]  \ar[d] & \cdots\ar[l]\\
\cdots\ar[r] &[L((Y_{j-2,\xi(j-2)-1})_{r+1})] \ar[u]\ar[d] &  [L((Y_{j-1,\xi(j-1)-1})_{r+1})] \ar[r]\ar[l] & [L((Y_{j,\xi(j)-1})_{r+1})]  \ar[u] \ar[d]& [L((Y_{j+1,\xi(j+1)-1})_{r+1})] \ar[l]\ar[r] & \cdots\\
\cdots & [L((Y_{j-2,\xi(j-2)-1})_{r+2})] \ar[l]\ar[r] &  [L((Y_{j-1,\xi(j-1)-3})_{r+2})] \ar[u] \ar[d] & [L((Y_{j,\xi(j)-1})_{r+2})]  \ar[r] \ar[l] & [L((Y_{j+1,\xi(j+1)-3})_{r+2})] \ar[u]  \ar[d] & \cdots\ar[l]\\
\cdots&\vdots\ar[u]&\vdots&\vdots\ar[u]&\vdots&\cdots
}}
\caption{The local seed corresponds to that of $\mathscr{A}(\mathbf{x}, Q_\xi)$, where $\xi(j)=\xi(j+2)$ for $1\leq j\leq n-2$.}\label{seed, fist}
\end{figure}
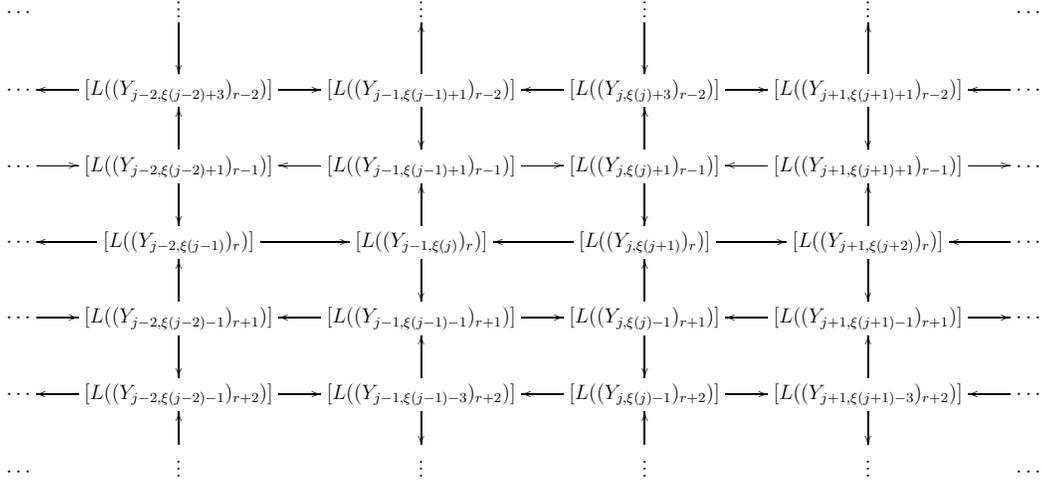
				
Assume that $j=j_s$. Then we proceed to mutate the seed $\mu_{\mathscr{S}}(\seed)$ using the sequence $\mathscr{S}_{s}$ and the local picture of the seed $\mu_{\mathscr{S}_{s}}\circ \mu_{\mathscr{S}}(\seed)$ is shown in Figure \ref{figure obtained by sequence j}. In this case, the local seed corresponds to that of  $\mathscr{A}(\mathbf{x}, Q_\xi)$, where $\xi(j-2)-1=\xi(j-1)= \xi(j)-1=\xi(j+1)$.
				
Assume that $j-1=j_s$. Then we proceed to mutate the seed $\mu_{\mathscr{S}}(\seed)$ using the sequence $\mathscr{S}_{s}$ and the local picture of the seed $\mu_{\mathscr{S}_{s}}\circ \mu_{\mathscr{S}}(\seed)$ is shown in Figure \ref{figure obtained by sequence j-1}. In this case, the local seed corresponds to that of  $\mathscr{A}(\mathbf{x}, Q_\xi)$, where $\xi(j-2)-1=\xi(j-1)= \xi(j)-1=\xi(j+1)-2$.
\begin{figure}
\centerline{
\resizebox{14cm}{!}{
\xymatrix{
\cdots&\vdots&\vdots\ar[d]&\vdots&\vdots\ar[ld]&\cdots\\
\cdots\ar[r] & [L((Y_{j-2,\xi(j-2)+1})_{r-2})] \ar[u] \ar[d]& \boxed{[L((Y_{j-1,\xi(j-1)+3})_{r-2})]}\ar[l]\ar[r] & [L((Y_{j,\xi(j)+1})_{r-2})] \ar[r]\ar[u]\ar[d] & [L((Y_{j+1,\xi(j+1)+1})_{r-2})] \ar[d]\ar[u] & \cdots\ar[l] \\
\cdots & \boxed{[L((Y_{j-2,\xi(j-2)+1})_{r-1})]}\ar[l]\ar[r] & [L((Y_{j-1,\xi(j-1)+1})_{r-1})]  \ar[u]\ar[d] & \boxed{[L((Y_{j,\xi(j)+1})_{r-1})]}  \ar[l]\ar[r]   & [L((Y_{j+1,\xi(j+1)+1})_{r-1})] \ar[ld]\ar[lu] \ar[r] & \cdots\\
\cdots \ar[r]& [L((Y_{j-2,\xi(j-1)})_r)] \ar[u]\ar[d] & \boxed{[L((Y_{j-1,\xi(j)})_r)]} \ar[r] \ar[l] & [L((Y_{j,\xi(j+1)})_r)] \ar[r] \ar[u] \ar[d] & [L((Y_{j+1,\xi(j+2)})_r)] \ar[u]  \ar[d] & \cdots\ar[l]\\
\cdots & \boxed{[L((Y_{j-2,\xi(j-2)-1})_{r+1})]} \ar[l]\ar[r] & [L((Y_{j-1,\xi(j-1)-1})_{r+1})] \ar[u]\ar[d] & \boxed{[L((Y_{j,\xi(j)-1})_{r+1})]} \ar[l] \ar[r]& [L((Y_{j+1,\xi(j+1)-1})_{r+1})] \ar[ld]\ar[lu]\ar[r] & \cdots\\
\cdots\ar[r] & [L((Y_{j-2,\xi(j-2)-3})_{r+2})] \ar[u]\ar[d] & \boxed{[L((Y_{j-1,\xi(j-1)-1})_{r+2})]} \ar[l] \ar[r] & [L((Y_{j,\xi(j)-3})_{r+2})] \ar[r] \ar[u] \ar[d] & [L((Y_{j+1,\xi(j+1)-3})_{r+2})] \ar[u]  \ar[d] & \cdots\ar[l]\\
\cdots&\vdots&\vdots\ar[u]&\vdots&\vdots\ar[lu]&\cdots\\
}}}
\caption{The local seed corresponds to $(\mathbf{x},\Q)$, where $\xi(j-2)-1=\xi(j-1)= \xi(j)-1=\xi(j+1)$.}\label{figure obtained by sequence j}
\end{figure}
\begin{figure}
\centerline{
\resizebox{14cm}{!}{
\xymatrix{
\cdots&\vdots &\vdots\ar[d]&\vdots\ar[d]&\vdots&\cdots\\
\cdots\ar[r] & \boxed{[L((Y_{j-2,\xi(j-2)+1})_{r-2})]}\ar[d] \ar[u]& [L((Y_{j-1,\xi(j-1)+3})_{r-2})] \ar[rd]\ar[ru]\ar[l] &  [L((Y_{j,\xi(j)+3})_{r-2})] \ar[l]\ar[r] & [L((Y_{j+1,\xi(j+1)+1})_{r-2})] \ar[d]\ar[u] & \cdots\ar[l] \\
\cdots & [L((Y_{j-2,\xi(j-2)+1})_{r-1})] \ar[l]\ar[r] & \boxed{[L((Y_{j-1,\xi(j-1)+1})_{r-1})]} \ar[d]\ar[u] & [L((Y_{j,\xi(j)+1})_{r-1})]\ar[l]  \ar[u]\ar[d]   & [L((Y_{j+1,\xi(j+1)+1})_{r-1})] \ar[l]\ar[r] & \cdots\\
\cdots\ar[r] & \boxed{[L((Y_{j-2,\xi(j-1)})_{r})]} \ar[d]\ar[u] & [L((Y_{j-1,\xi(j)})_{r})] \ar[ru] \ar[rd]\ar[l] & [L((Y_{j,\xi(j+1)})_r)]  \ar[r] \ar[l] & [L((Y_{j+1,\xi(j+2)})_r)] \ar[u]  \ar[d] & \cdots\ar[l]\\
\cdots  & [L((Y_{j-2,\xi(j-2)-1})_{r+1})] \ar[l]\ar[r] & \boxed{[L((Y_{j-1,\xi(j-1)-1})_{r+1})]}\ar[d]\ar[u] &  [L((Y_{j,\xi(j)-1})_{r+1})] \ar[l]  \ar[u] \ar[d]& [L((Y_{j+1,\xi(j+1)-1})_{r+1})] \ar[l]\ar[r] & \cdots\\
\cdots\ar[r] & \boxed{[L((Y_{j-2,\xi(j-2)-3})_{r+2})]}\ar[u]\ar[d] & [L((Y_{j-1,\xi(j-1)-1})_{r+2})] \ar[l]\ar[ru] \ar[rd] & [L((Y_{j,\xi(j)-1})_{r+2})] \ar[r] \ar[l] & [L((Y_{j+1,\xi(j+1)-3})_{r+2})] \ar[u]  \ar[d] & \cdots\ar[l]\\				\cdots&\vdots&\vdots\ar[u]&\vdots\ar[u]&\vdots&\cdots\\
}}}
\caption{The local seed corresponds to that of $\mathscr{A}(\mathbf{x}, Q_\xi)$, where $\xi(j-2)-1=\xi(j-1)= \xi(j)-1=\xi(j+1)-2$.}\label{figure obtained by sequence j-1}
\end{figure}
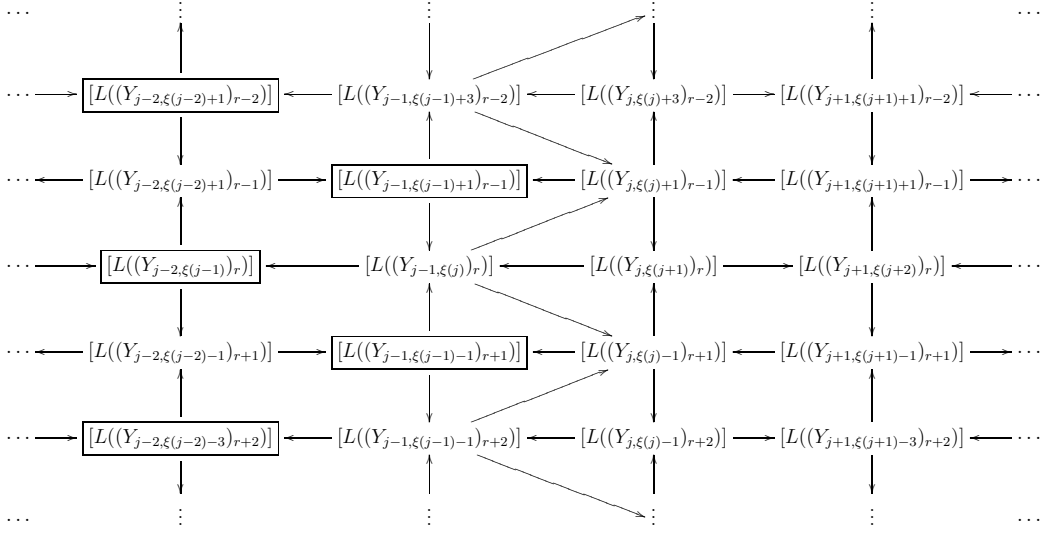
				
Continuing to mutate the seed $\mu_{\mathscr{S}_{s}}\circ \mu_{\mathscr{S}}(\seed)$ using the sequences $\mathscr{S}_{s-1}, \mathscr{S}_{s-2}, \dots,$ $\mathscr{S}_{1}$ in order, then we obtain the seed $\mathscr{S}_m(\seed)$. Based on the argument above, we can obtain $Q_\xi$ from the quiver of $\mathscr{S}_m(\seed)$ by following two steps:
\begin{itemize}
\item[(a)] freezing the vertices at the $(r-1)$th, $(r+1)$th rows, denote the obtained seed as $\Sigma'$.
\item[(b)] restricting to the vertices at the $(r-1)$th, $r$th, $(r+1)$th rows of the seed $\Sigma'$, denote the obtained seed as $\Sigma''$. 
\end{itemize}				

We define the set $I$ to represent the index set of the vertices at the $(r-1)$th, $r$th, $(r+1)$th rows of the quiver of $\Sigma'$, define the set $J$ to represent the index set of the vertices of the quiver of $\Sigma'$ that not in the set $I$, then there are no arrows connecting the intersection of $I$ and the index set of cluster variables with $J$ to each other in seed $\Sigma'$. According to  Definition \ref{Definition of FWZ3}, $\Sigma''$  is a seed subpattern of the seed $\mathscr{S}_m(\seed)$, the cluster algebra associated to $\Sigma''$ is a cluster subalgebra of the cluster algebra $K_0(\mathscr{C}_\ell)$ associated to $\mathscr{S}_m(\seed)$. 

Thus the seed $(\mathbf{x},\Q)$ is isomorphic to a seed subpattern of the seed $\mathscr{S}_m(\seed)$, the cluster algebra $\mathscr{A}(\mathbf{x},\Q)$ is isomorphic to a cluster subalgebra of the cluster algebra $K_0(\mathscr{C}_\ell)$.
\end{proof}

In the seed $\mathscr{S}_m(\seed)$, we denote by $[L(f'_i)]$ the cluster variable corresponding to the vertex $(i,r-1)$, by $[L(x_i)]$ the cluster variable corresponding to the vertex $(i,r)$, and by $[L(f''_i)]$ the cluster variable corresponding to the vertex $(i,r+1)$, $i\in I$, where $f'_{i}= (Y_{i,\xi(i)+1})_{r-1}, x_i=(Y_{i,\xi(i+1)})_r, f''_{i}= (Y_{i,\xi(i)-1})_{r+1}$ up to spectral parameter. It follows from Lemma \ref{theorem of real prime} that the image of $\mathbf{x}[\alpha_{i,j}]$ under the isomorphism in Theorem \ref{mutation equivalent}  is the equivalence class of a simple module, denoted by $[L(x[\alpha_{i,j}])]$. The isomorphism in Theorem \ref{mutation equivalent} is explicitly given by
\[
\mathbf{x}_{i} \to [L(x_i)], \quad \mathbf{f}'_{i} \to [L(f'_i)], \quad \mathbf{f}''_{i} \to [L(f''_i)], \quad \mathbf{x}[\alpha_{i,j}] \to [L(x[\alpha_{i,j}])],
\]
where $x_i=(Y_{i,\xi(i+1)})_r, f'_{i}= (Y_{i,\xi(i)+1})_{r-1}, f''_{i}= (Y_{i,\xi(i)-1})_{r+1}$ up to spectral parameter.
			
Applying the isomorphism in Theorem \ref{mutation equivalent} to the equations in Corollary \ref{prop:mutation equations0}, we obtain some exchange relations in $\seed$ as follows.

\begin{corollary} \label{prop:mutation equations}
Let $1\leq i<j\leq n$. Then
\begin{gather}
\begin{aligned}\label{mutation equationsi=i} 
[L(x_i)] [L(x[\alpha_{i,i}])] & = [L(f'_i)] [L(f''_i)] [L(x_{i+1})]^{1-d_i} +[L(x_{i-1})] [L(f'_{i+1})]^{1-d_i} [L(f''_{i+1})]^{1-d_i} [L(x_{i+1})]^{d_i},
\end{aligned}
\end{gather}

\begin{gather}
\begin{aligned}\label{mutation equations} 
[L(x_j)] [L(x[\alpha_{i,j}])] & =  [L(x[\alpha_{i,j-1}])] [L(f'_j)]^{d_{j-1}} [L(f''_j)]^{d_{j-1}}   [L(x_{j+1})]^{1-d_{j}} +  [L(f'_{j+1})]^{1-d_j} [L(f''_{j+1})]^{1-d_j}    \\ 
& \quad \,\, [L(x_{j+1})]^{d_j} \big(  [L(f'_i)]^{\delta_{i,j_{\bullet}}} [L(f''_i)]^{\delta_{i,j_{\bullet}}} [L(x_{i-1})]^{\delta_{i_\bullet,j_{\bullet}}} + (1-\delta_{i_\bullet,j_\bullet}-\delta_{i,j_{\bullet}}) [L(f'_{j_\bullet})]^{d_{j_\bullet-1}}  \\
& \quad \,\,  [L(f''_{j_\bullet})]^{d_{j_\bullet-1}}  [L(x[\alpha_{i,j_\bullet-1}])] \big). 
\end{aligned}
\end{gather}
\end{corollary}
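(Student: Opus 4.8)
The plan is to obtain the two identities by transporting the corresponding identities of Corollary~\ref{prop:mutation equations0} along the ring isomorphism furnished by Theorem~\ref{mutation equivalent}. Write $\Psi\colon \mathscr{A}(\mathbf{x},\Q)\xrightarrow{\sim}\seed$ for that isomorphism, understood as taking values in the cluster subalgebra of $\seed$ described in the theorem, which is in particular a subring of $\seed$. By the explicit description of $\Psi$ recorded just before the present statement, $\Psi$ acts by $\mathbf{x}_i\mapsto[L(x_i)]$, $\mathbf{f}'_i\mapsto[L(f'_i)]$, $\mathbf{f}''_i\mapsto[L(f''_i)]$ and $\mathbf{x}[\alpha_{i,j}]\mapsto[L(x[\alpha_{i,j}])]$; the last assignment is nothing but the definition of the symbol $[L(x[\alpha_{i,j}])]$, and it is the class of a real prime simple module by Lemma~\ref{theorem of real prime}, since $\mathbf{x}[\alpha_{i,j}]$ is a cluster variable of $\mathscr{A}(\mathbf{x},\Q)$.

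First I would record that both displayed equalities in Corollary~\ref{prop:mutation equations0} are genuine polynomial identities in the commutative ring $\mathscr{A}(\mathbf{x},\Q)$: each side is a sum of monomials in the generators $\mathbf{x}_i,\mathbf{f}'_i,\mathbf{f}''_i$ and the cluster variables $\mathbf{x}[\alpha_{i,\bullet}]$, and every exponent appearing --- the numbers $d_i=\delta_{i,i_\diamond}$ as well as the Kronecker deltas $\delta_{i,j_\bullet}$ and $\delta_{i_\bullet,j_\bullet}$ --- is a fixed integer depending only on the height function $\xi$, not on the ring. Applying $\Psi$ and using that it is both multiplicative and additive, each monomial is carried to the monomial with the same exponents in the corresponding classes $[L(\cdot)]$, and the two summands (respectively three, after distributing the factor in front of the bracket) on each right-hand side are preserved. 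What results is exactly Equations~\eqref{mutation equationsi=i} and \eqref{mutation equations}.

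The only points deserving a comment are the usual boundary conventions, and they cause no trouble. When $i=1$ the factor $\mathbf{x}_{i-1}$ is read as $1$, so the corresponding factor $[L(x_{i-1})]$ on the other side is the unit of $\seed$; and when $j_\bullet=0$ the coefficient $1-\delta_{i_\bullet,j_\bullet}-\delta_{i,j_\bullet}$ of the term involving $\mathbf{x}[\alpha_{i,j_\bullet-1}]$ vanishes, so no ill-defined cluster variable is ever referenced. These are precisely the conventions fixed in Section~\ref{section:recall HL-module}, and $\Psi$ respects them. Consequently I anticipate no genuine obstacle: all the substance has already been supplied by Lemma~\ref{arrows accident with j}, Corollary~\ref{prop:mutation equations0} and Theorem~\ref{mutation equivalent}, and the present statement is their immediate combination --- the proof reduces to one application of the isomorphism of Theorem~\ref{mutation equivalent} to the relations of Corollary~\ref{prop:mutation equations0}.
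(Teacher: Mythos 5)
Your proposal is correct and coincides with the paper's argument: the paper likewise obtains these relations simply by applying the isomorphism of Theorem \ref{mutation equivalent} to the exchange relations of Corollary \ref{prop:mutation equations0}, using the explicit assignments $\mathbf{x}_i\mapsto[L(x_i)]$, $\mathbf{f}'_i\mapsto[L(f'_i)]$, $\mathbf{f}''_i\mapsto[L(f''_i)]$, $\mathbf{x}[\alpha_{i,j}]\mapsto[L(x[\alpha_{i,j}])]$. Your extra remarks on the integrality of the exponents and the boundary conventions are harmless elaborations of the same one-step argument.
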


\subsection{Equivalence classes of $\HL$-modules as cluster variables}
Recall that the definitions of $\xi$ and $\omega(i,j)$ in Section \ref{section:recall HL-module}. The following lemmas provide some properties of $(\omega(i,j))_r$ that we will use in the sequel.
			
\begin{lemma}\label{equation of omega1}
If $i_\diamond<j-1$ and $j-1 \neq (j-1)_{\diamond}$, then
\[
(\omega(i,j))_r = (\omega(i,j_{\bullet}+1))_r(Y_{j,\xi(j+1)})_r.
\]
\end{lemma}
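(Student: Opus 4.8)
The plan is to prove the identity $(\omega(i,j))_r = (\omega(i,j_\bullet+1))_r\,(Y_{j,\xi(j+1)})_r$ by unwinding the definition of $\omega(i,j)$ from Equation~\eqref{Brito-Chari notation} and matching factors on both sides. Recall that $\omega(i,j) = Y_{i_1,a_1}Y_{i_2,a_2}\cdots Y_{i_k,a_k}$, where $i=i_1<i_2<\cdots<i_k=j$ and $i_2<\cdots<i_{k-1}$ enumerates $\{\ell\mid i<\ell<j,\ \xi(\ell-1)=\xi(\ell+1)\}$, that is, the indices $\ell$ in the open interval with $\ell=\ell_\diamond$. Since $(\cdot)_r$ is multiplicative in the sense of Equation~\eqref{Monomial of HLr} (it sends $Y_{i_s,a_s}$ to $X^{(a_s)}_{i_s,r}$ and a product of $Y$'s to the product of the corresponding $X$'s), it suffices to establish the corresponding factorization of the underlying dominant monomial $\omega(i,j) = \omega(i,j_\bullet+1)\,Y_{j,\xi(j+1)}$, and then apply $(\cdot)_r$ to both sides.

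First I would analyze which indices appear in $\omega(i,j)$. Under the hypothesis $i_\diamond<j-1$ and $j-1\neq(j-1)_\diamond$, I claim the largest ``interior diamond index'' $i_{k-1}$ is exactly $j_\bullet$: by definition $j_\bullet\in[1_\diamond,j)$ is maximal with $j_\bullet=(j_\bullet)_\diamond$, and the condition $\xi(\ell-1)=\xi(\ell+1)$ is precisely $\ell=\ell_\diamond$ (using the extension conventions $\xi(0)=\xi(2)$, $\xi(n+1)=\xi(n-1)$), so the interior set $\{\ell\mid i<\ell<j,\ \ell=\ell_\diamond\}$ has maximum $j_\bullet$ provided $j_\bullet>i$; the hypotheses $i_\diamond<j-1$ and $j-1\neq(j-1)_\diamond$ guarantee this interior set is nonempty and that $j_\bullet\le j-2$, so $j_\bullet+1\le j-1<j$. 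Hence $\omega(i,j)$ and $\omega(i,j_\bullet+1)$ share the same initial block $Y_{i_1,a_1}\cdots Y_{i_{k-1},a_{k-1}}$ of factors indexed by $i=i_1<\cdots<i_{k-1}=j_\bullet$: indeed the interior-diamond indices of $\omega(i,j_\bullet+1)$ strictly between $i$ and $j_\bullet+1$ are the same as those of $\omega(i,j)$ strictly between $i$ and $j$ (since $j_\bullet$ is the largest such and it becomes the right endpoint), and one must check the spectral parameter $a_{k-1}$ assigned to $j_\bullet$ agrees in both cases. This is the delicate bookkeeping point: in $\omega(i,j)$ the parameter $a_{k-1}$ attached to $i_{k-1}=j_\bullet$ is determined by the sign rule $a_\ell=\xi(i_\ell)\pm1$ according to whether $\xi(i_\ell)=\xi(i_\ell-1)\pm1$, which depends only on local data at $j_\bullet$, hence is the same whether $j_\bullet$ sits inside $\omega(i,j)$ or is the endpoint of $\omega(i,j_\bullet+1)$. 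So the initial blocks coincide, and it remains to identify the last factor $Y_{i_k,a_k}=Y_{j,a_k}$ of $\omega(i,j)$ with $Y_{j,\xi(j+1)}$, i.e.\ to show $a_k=\xi(j+1)$. Here $a_k = \xi(j)\pm1$ according to the sign of $\xi(j)-\xi(j-1)$; since $|\xi(j)-\xi(j+1)|=1$ and, by $j-1\neq(j-1)_\diamond$ meaning $\xi(j-2)\neq\xi(j)$ together with the height-function constraint, one gets $\xi(j+1)=\xi(j-1)$ forcing the correct sign so that $\xi(j)\pm1 = \xi(j+1)$.

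The second half is to transfer this monomial identity through $(\cdot)_r$. Because $(m_1 m_2)_r = (m_1)_r (m_2)_r$ whenever $m_1,m_2$ are dominant monomials whose product is written with the factors in the prescribed order (which holds here since the $i_s$ are strictly increasing and $j_\bullet+1\le j$), applying $(\cdot)_r$ to $\omega(i,j)=\omega(i,j_\bullet+1)\,Y_{j,\xi(j+1)}$ yields exactly $(\omega(i,j))_r = (\omega(i,j_\bullet+1))_r\,(Y_{j,\xi(j+1)})_r$, which is the claim. I would close by remarking that the degenerate sub-cases — when $j_\bullet=1_\diamond$ (so the interior set reduces to a singleton) or when $i=1_\diamond$ itself — are handled by the same parameter computation, using the conventions $(n-1)_\diamond=n-1$, $n_\diamond=n$ and the extension of $\xi$ to $[0,n+1]$.

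The main obstacle is the spectral-parameter bookkeeping: verifying that the exponent $a_{k-1}$ attached to $j_\bullet$ is unchanged when $j_\bullet$ is reinterpreted as the right endpoint of the shorter monomial, and that $a_k=\xi(j+1)$ exactly. Both reduce to careful case analysis of the two possible local shapes of $\xi$ near $j$ (whether $\xi$ increases or decreases across $j-1\to j$ and $j\to j+1$), using $|\xi(\ell)-\xi(\ell-1)|=1$ and the hypotheses $i_\diamond<j-1$, $j-1\neq(j-1)_\diamond$ to pin down the signs; once the signs are fixed the identity $\xi(j)\pm1=\xi(j+1)$ is immediate. Everything else is routine multiplicativity of $(\cdot)_r$.
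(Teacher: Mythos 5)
Your overall strategy coincides with the paper's (factor the underlying dominant monomial $\omega(i,j)=\omega(i,j_\bullet+1)\,Y_{j,\xi(j+1)}$ and then apply $(\cdot)_r$, whose multiplicativity is immediate), but two of your key identifications are incorrect, and the second would actually produce the wrong conclusion. First, the condition $\xi(\ell-1)=\xi(\ell+1)$ defining the interior indices of $\omega(i,j)$ is not ``$\ell=\ell_\diamond$''; since $m=m_\diamond$ means $\xi(m)=\xi(m+2)$, it is $(\ell-1)=(\ell-1)_\diamond$. Hence the largest interior index is $i_{k-1}=j_\bullet+1$, not $j_\bullet$ (the hypotheses $i_\diamond<j-1$ and $j-1\neq(j-1)_\diamond$ give $i<j_\bullet+1\leq j-1$). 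This off-by-one is what makes the truncated word $Y_{i_1,a_1}\cdots Y_{i_{k-1},a_{k-1}}$ literally equal to $\omega(i,j_\bullet+1)$: its right endpoint is $j_\bullet+1$, and the rule $a_\ell=\xi(i_\ell)\pm1$ for $\ell\geq 2$ depends only on $\xi(i_\ell)$ and $\xi(i_\ell-1)$, so the parameter at $j_\bullet+1$ is unchanged when it becomes an endpoint. With your identification $i_{k-1}=j_\bullet$, the bookkeeping does not close up: under your reading $j_\bullet$ would itself be an interior index of $\omega(i,j_\bullet+1)$, so the shorter word would not coincide with your claimed initial block.

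Second, and more seriously, your computation of $a_k$ is inverted. The hypothesis $j-1\neq(j-1)_\diamond$ says $\xi(j-1)\neq\xi(j+1)$ (not $\xi(j-2)\neq\xi(j)$), and it certainly does not yield $\xi(j+1)=\xi(j-1)$; that equality is exactly the negation of the hypothesis. The correct use is the opposite: since $\xi(j-1)\neq\xi(j+1)$ and both differ from $\xi(j)$ by $1$, the height function is strictly monotone across $j-1,\,j,\,j+1$, so $a_k=2\xi(j)-\xi(j-1)=\xi(j+1)$, which is the paper's argument (split into the cases $\xi(j-1)<\xi(j)$ and $\xi(j-1)>\xi(j)$). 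If $\xi(j+1)=\xi(j-1)$ held, as you assert, then $a_k=\xi(j)\pm1$ would equal $\xi(j+1)\pm2$ and the lemma's conclusion would be false, so your last step as written does not establish $a_k=\xi(j+1)$. Repairing these two points — take $i_{k-1}=j_\bullet+1$, and use $\xi(j-1)\neq\xi(j+1)$ to pin down the sign — turns your outline into the paper's proof.
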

			
\begin{proof}
We only prove the case that $\xi(j-1)<\xi(j)$, the case that $\xi(j-1)>\xi(j)$ is similar.
				
Assume  that $\omega(i,j)=Y_{i_1,a_1} \cdots Y_{i_k,a_k}$ with  $i=i_1<i_2<\dots<i_k=j$. By definition,
\[
(\omega(i,j))_r =(Y_{i_1,a_1} \cdots Y_{i_k,a_k})_r.
\]
It follows from the definition of $\omega(i,j)$ that $i_{k-1}=j_{\bullet}+1$. Since $\xi(j-1)<\xi(j)$, we have $a_k=\xi(j)+1$. Moreover, $j-1\neq (j-1)_{\diamond}$ and $\xi(j-1)<\xi(j)$ imply that $\xi(j)+1=\xi(j+1)$. Therefore
 \[
(\omega(i,j))_r =(Y_{i_1,a_1} \cdots Y_{i_{k-1},a_{k-1}})_r (Y_{i_k,a_k})_r=(\omega(i,j_{\bullet}+1))_r (Y_{j,\xi(j+1)})_r.
\]
\end{proof}
			
\begin{lemma}\label{equation of omega2}
If $i_\diamond<j-1$ and $j-1=(j-1)_{\diamond}$, then
\[
(\omega(i,j))_r = (\omega(i,(j-1)_{\bullet}+1))_r (Y_{j, \xi(j)\pm 1})_r
\]
if $\xi(j-1)\pm1=\xi(j)$.
\end{lemma}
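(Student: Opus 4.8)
The plan is to mimic the proof of Lemma \ref{equation of omega1}, adapting the bookkeeping of the distinguished indices appearing in $\omega(i,j)$ to the case $j-1=(j-1)_\diamond$. As before it suffices to treat one of the two sign cases, say $\xi(j-1)+1=\xi(j)$, the other being symmetric under reversing the height function. First I would write $\omega(i,j)=Y_{i_1,a_1}\cdots Y_{i_k,a_k}$ with $i=i_1<i_2<\cdots<i_k=j$, recalling that $i_2<\cdots<i_{k-1}$ enumerates $\{\ell\mid i<\ell<j,\ \xi(\ell-1)=\xi(\ell+1)\}$. The key combinatorial point is to identify $i_{k-1}$: unlike in Lemma \ref{equation of omega1}, where $j-1\neq(j-1)_\diamond$ forced $i_{k-1}=j_\bullet+1$, here the hypothesis $j-1=(j-1)_\diamond$ means $j-1$ itself is a $\diamond$-fixed point, so one must check from the definitions of $\bullet$ and $\diamond$ that $i_{k-1}=(j-1)_\bullet+1$. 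This is the step I expect to require the most care: it is a purely combinatorial identity about the height function $\xi$ near $j$, and getting the edge cases right (e.g. when $(j-1)_\bullet$ is small or when $i$ is close to $j$) is where a sloppy argument would break down.

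Next I would determine the spectral parameter $a_k$ attached to the last factor $Y_{j,a_k}$. Since $i_k=j=i_{k-1}\pm(\text{something})$ with $\xi(j-1)+1=\xi(j)$, the recipe in \eqref{Brito-Chari notation} gives $a_k=\xi(j)\pm 1$; the sign is pinned down by which of $\xi(i_{k-1}-1)=\xi(i_{k-1})\pm1$ holds, and the claimed answer $(Y_{j,\xi(j)\pm1})_r$ with $\xi(j-1)\pm1=\xi(j)$ tells us exactly which sign to record. I would then split off this last factor: by the definition \eqref{Monomial of HLr} of $(\,\cdot\,)_r$, the operation $M\mapsto M_r$ is multiplicative, so
\[
(\omega(i,j))_r=(Y_{i_1,a_1}\cdots Y_{i_{k-1},a_{k-1}})_r\,(Y_{i_k,a_k})_r,
\]
and it remains to recognize the first factor as $(\omega(i,(j-1)_\bullet+1))_r$. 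For that I would invoke the identification $i_{k-1}=(j-1)_\bullet+1$ from the first step together with the observation that $Y_{i_1,a_1}\cdots Y_{i_{k-1},a_{k-1}}$ is precisely the monomial $\omega(i,i_{k-1})$ — i.e. that truncating $\omega(i,j)$ at its penultimate index reproduces the Brito--Chari monomial for the shorter interval. This truncation stability of $\omega$ is essentially immediate from the definition of the enumeration set $\{\ell\mid i<\ell<j,\ \xi(\ell-1)=\xi(\ell+1)\}$, since shrinking $j$ to $i_{k-1}$ only removes the endpoint condition and does not alter which interior indices qualify, but I would state it explicitly to keep the argument self-contained.

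Assembling these pieces gives
\[
(\omega(i,j))_r=(\omega(i,(j-1)_\bullet+1))_r\,(Y_{j,\xi(j)\pm1})_r
\]
under $\xi(j-1)\pm1=\xi(j)$, which is the assertion. In the write-up I would present the $\xi(j-1)<\xi(j)$ case in full and remark that the $\xi(j-1)>\xi(j)$ case follows by the evident symmetry, exactly as in the proof of Lemma \ref{equation of omega1}. The only genuinely non-formal input is the determination of $i_{k-1}$, so I would isolate that as the first displayed claim and give the $\bullet$/$\diamond$ bookkeeping there; everything after it is multiplicativity of $(\,\cdot\,)_r$ and the definition of $\omega$.
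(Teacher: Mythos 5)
Your proposal follows essentially the same route as the paper's proof: write $\omega(i,j)=Y_{i_1,a_1}\cdots Y_{i_k,a_k}$, identify $i_{k-1}=(j-1)_\bullet+1$ from the definition of $\omega$, note that $\xi(j-1)<\xi(j)$ forces $a_k=\xi(j)+1$ (and symmetrically in the other case), and split off the last factor using multiplicativity of $(\,\cdot\,)_r$ together with the truncation identity $Y_{i_1,a_1}\cdots Y_{i_{k-1},a_{k-1}}=\omega(i,i_{k-1})$, which the paper uses implicitly. One small bookkeeping correction: the sign of $a_k$ is determined by the recipe applied at the index $i_k=j$, i.e.\ by whether $\xi(j)=\xi(j-1)\pm1$, not by $\xi(i_{k-1}-1)$ versus $\xi(i_{k-1})$, but this does not affect your argument.
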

			
\begin{proof}
We only prove the case that $\xi(j-1)<\xi(j)$, the case that $\xi(j-1)>\xi(j)$ is similar.
				
Assume that $\omega(i,j)=Y_{i_1,a_1} \cdots Y_{i_k,a_k}$ with  $i=i_1<i_2<\dots<i_k=j$. By definition,
\[
(\omega(i,j))_r =(Y_{i_1,a_1} \cdots Y_{i_k,a_k})_r.
\]
It follows from the definition of $\omega(i,j)$ that $i_{k-1}=(j-1)_{\bullet}+1$. Since $\xi(j-1)<\xi(j)$, we have $a_k=\xi(j)+1$.  Therefore
\[
 (\omega(i,j))_r =(Y_{i_1,a_1} \cdots Y_{i_{k-1},a_{k-1}})_r (Y_{i_k,a_k})_r=(\omega(i,(j-1)_{\bullet}+1))_r (Y_{j,\xi(j)+1})_r.
\]
\end{proof}
						
The following proposition is very useful in the sequel.

\begin{proposition}\label{lemma:equations satisfy by q-character}
In the cluster algebra $\seed$, each cluster exchange relation is of the following form:
\[
[L(m_1)] [L(m_2)]= [L(m_{a_1})] \cdots[L(m_{a_s})] +  [L(m_{b_1})]\cdots[L(m_{b_t})],
\]
where $s,t\in\mathbb{Z}_{\geq1}$, and $m_1,m_2, m_{a_1},\ldots,m_{a_s}, m_{b_1},\ldots,m_{b_t}$ are in $\mathcal{P}^{+}$. Moreover, $m_1m_2$ equals to the maximum monomial between $m_{a_1} \cdots m_{a_s}$ and  $m_{b_1}  \cdots  m_{b_t}$ with respect to $\leq$ defined in (\ref{partial order on weight lattice}).
\end{proposition}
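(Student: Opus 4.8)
The plan is to combine Lemma~\ref{theorem of real prime} (every cluster variable and cluster monomial of $K_0(\mathscr{C}_\ell)$ is the class of a real, respectively real prime, simple module) with the explicit shape of the exchange relations obtained in Corollary~\ref{prop:mutation equations}. By Lemma~\ref{theorem of real prime}, any cluster variable appearing in an exchange relation of $\seed$ equals $[L(m)]$ for a unique dominant monomial $m\in\mathcal{P}^+$, and each of the two terms on the right-hand side of a mutation relation is a cluster monomial, hence also of the form $[L(n)]$ for a dominant monomial $n$ which is the product of the monomials attached to the individual cluster/frozen variables involved. This already gives the displayed shape $[L(m_1)][L(m_2)]=[L(m_{a_1})]\cdots[L(m_{a_s})]+[L(m_{b_1})]\cdots[L(m_{b_t})]$ with all listed monomials dominant; the only genuine content is the last sentence about $m_1m_2$.

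For the claim that $m_1m_2$ equals the $\leq$-maximum of $m_{a_1}\cdots m_{a_s}$ and $m_{b_1}\cdots m_{b_t}$, I would argue as follows. Apply $\chi_q$, which is an injective ring homomorphism, to the exchange relation; since $\chi_q$ is multiplicative, $\chi_q([L(m_1)][L(m_2)])=\chi_q([L(m_1)])\chi_q([L(m_2)])$, whose highest $l$-weight monomial with respect to $\leq$ is $m_1m_2$ (the product of the dominant monomials $m_1$, $m_2$, using that for any simple module the highest $l$-weight monomial is $\leq$-maximal in its $q$-character, as recorded after Equation~(\ref{Frenkel-Mukhin formula})). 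Likewise the highest monomial of $\chi_q([L(m_{a_1})]\cdots[L(m_{a_s})])$ is $m_{a_1}\cdots m_{a_s}$ and that of $\chi_q([L(m_{b_1})]\cdots[L(m_{b_t})])$ is $m_{b_1}\cdots m_{b_t}$. A Laurent polynomial identity $P=Q_a+Q_b$ among $q$-characters, combined with the fact that all monomials occurring are obtained from the respective highest monomials by multiplying by products of $A_{i,s}^{-1}$ (Frenkel--Mukhin, Equation~(\ref{Frenkel-Mukhin formula})), forces the highest monomial of $P$ to be the $\leq$-maximum of the highest monomials of $Q_a$ and $Q_b$: no cancellation can occur at the top because a strictly $\leq$-maximal monomial in $Q_a$ (or $Q_b$) appears with coefficient $1$ and cannot be cancelled, and if the two highest monomials were $\leq$-incomparable the sum would have two distinct maximal monomials, contradicting that $P$, being a product of $q$-characters of simples, has a unique $\leq$-maximal monomial $m_1m_2$. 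Hence $m_1m_2=\max_{\leq}\{m_{a_1}\cdots m_{a_s},\,m_{b_1}\cdots m_{b_t}\}$.

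Concretely, it suffices to verify this for the two families of exchange relations in Corollary~\ref{prop:mutation equations}, since (as already used implicitly) every cluster exchange relation in $\seed$ that we will invoke in the sequel arises, after applying the isomorphism of Theorem~\ref{mutation equivalent}, from a $T$-system relation, and the relations of Corollary~\ref{prop:mutation equations0}--\ref{prop:mutation equations} are exactly the images of the relevant mutations. For relation~(\ref{mutation equationsi=i}) one checks directly that $x_i\cdot x[\alpha_{i,i}]$, $f'_if''_i\,x_{i+1}^{1-d_i}$ and $x_{i-1}(f'_{i+1}f''_{i+1})^{1-d_i}x_{i+1}^{d_i}$ are dominant monomials and compares them in the order $\leq$ using the explicit expressions $x_i=(Y_{i,\xi(i+1)})_r$, $f'_i=(Y_{i,\xi(i)+1})_{r-1}$, $f''_i=(Y_{i,\xi(i)-1})_{r+1}$ recorded before the corollary together with the definition of $A_{i,s}^{-1}$; the computation for~(\ref{mutation equations}) is the same in spirit but bookkeeps more factors, including the $\delta$-dependent case split on whether $i=j_\bullet$, $i_\bullet=j_\bullet$, or neither. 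The main obstacle is precisely this last point: ensuring that in every branch of the case analysis the two right-hand monomials are in fact $\leq$-comparable (so that the ``maximum'' is well defined) and identifying which one it is; this is where the combinatorics of the height function $\xi$ and the indices $j_\bullet, i_\diamond$ enter, and it is the step that requires care rather than the general $q$-character argument, which is soft.
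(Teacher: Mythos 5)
Your first two paragraphs are essentially the paper's proof: invoke Lemma \ref{theorem of real prime} to identify every cluster variable (and cluster monomial) occurring in an exchange relation with the class of a real simple module, apply the injective ring homomorphism $\chi_q$, and compare highest $l$-weight monomials using the Frenkel--Mukhin form (\ref{Frenkel-Mukhin formula}); your added remarks that no cancellation can occur (all coefficients are nonnegative) and that incomparable heads would yield two distinct maximal monomials are exactly the implicit justification of the paper's final step. Up to that point your argument is complete and correct.

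The third paragraph, however, rests on a misunderstanding and should be dropped. There is no remaining case analysis: since $m_1m_2$ is the maximum monomial of the sum and must occur in one of the two summands, it equals the head of that summand and dominates the head of the other, so the comparability of $m_{a_1}\cdots m_{a_s}$ and $m_{b_1}\cdots m_{b_t}$ is automatic and requires no input from the height function $\xi$ or the indices $j_\bullet$, $i_\diamond$. Moreover, the proposition is a statement about \emph{every} cluster exchange relation of $\seed$, so ``verifying it for the two families in Corollary \ref{prop:mutation equations}'' would establish strictly less than what is claimed (and those relations are themselves obtained from Lemma \ref{arrows accident with j} together with Theorem \ref{mutation equivalent}, independently of this proposition). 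Finally, identifying \emph{which} of the two products is the maximum is not part of the statement; that determination is made later, relation by relation, in Lemma \ref{Lem:Hight weight monomial} and Proposition \ref{pro:cluster variables} via divisibility arguments such as $x_i \nmid x_{i-1}x_{i+1}$. With the third paragraph removed, what remains is precisely the paper's argument.
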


\begin{proof}
It follows from \cite{Q17} or independently \cite{KKKO18} that cluster monomials in $\seed$ are equivalence classes of real simple modules in $\mathscr{C}_\ell$. Hence we obtain the required cluster exchange relations:
\begin{align}\label{equ: equation satisfied by q-ch}
\begin{split}
\chi_q(L(m_1)) \chi_q(L(m_2))= \chi_q(L(m_{a_1})) \cdots \chi_q(L(m_{a_s})) + \chi_q(L(m_{b_1})) \cdots \chi_q(L(m_{b_t})).
\end{split}
\end{align}

By Equation (\ref{Frenkel-Mukhin formula}), we know that $m_1 m_2$ is the maximum monomial with respect to $\leq$ among monomials in $\chi_q(L(m_1)) \chi_q(L(m_2))$. So $m_1 m_2$ is the maximum monomial among monomials in
\[
\chi_q(L(m_{a_1})) \ldots \chi_q(L(m_{a_s})) +  \chi_q(L(m_{b_1})) \ldots \chi_q(L(m_{b_t})).
\]
Thus $m_1m_2$ equals to the maximum monomial between $m_{a_1} \cdots m_{a_s}$ and  $m_{b_1}  \cdots  m_{b_t}$.
\end{proof}

Proposition \ref{lemma:equations satisfy by q-character} provides a method to compute the highest $l$-weight monomial $m_2$ of the new cluster variable $[L(m_2)]$ in $\seed$ once the cluster variables $[L(m_1)], [L(m_{a_1})], \dots$, $[L(m_{a_s})], [L(m_{b_1})],\dots,[L(m_{b_t})]$ are known by an iteration from initial Kirillov-Reshetikhin modules.

\begin{lemma}\label{Lem:Hight weight monomial}
Let $\xi$ be a height function and $r\in\mathbb{Z}_{\geq1}$. Then for $1\leq i\leq j\leq n$, $L(x[\alpha_{i,j}])$ is an $\HL$-module with the highest $l$-weight monomial
\[
\begin{cases} 
(Y_{i,\xi(i)\pm1})_r\quad\quad&\text{if}~ j=i_{\diamond}~\text{and}~\xi(i)=\xi(i+1)\pm1,\\
(\omega(i,\overline{j}))_r \quad\quad&\text{if}~j\neq i_{\diamond},
\end{cases}
\]
where $\overline{j}=(j+1)(1-\delta_{j,j_{\diamond}})+(j_{\bullet}+1)\delta_{j,j_{\diamond}}$.
\end{lemma}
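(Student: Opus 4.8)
**The plan is to prove this by induction on $j-i$, using the exchange relations of Corollary \ref{prop:mutation equations} together with the monomial-comparison criterion of Proposition \ref{lemma:equations satisfy by q-character}.** The base case $j=i$ is handled by Equation (\ref{mutation equationsi=i}): I compute $[L(x_i)][L(x[\alpha_{i,i}])]$ using the known highest $l$-weight monomials $x_i=(Y_{i,\xi(i+1)})_r$, $f'_i=(Y_{i,\xi(i)+1})_{r-1}$, $f''_i=(Y_{i,\xi(i)-1})_{r+1}$, $x_{i-1}=(Y_{i-1,\xi(i)})_r$, $x_{i+1}=(Y_{i+1,\xi(i+2)})_r$, then by Proposition \ref{lemma:equations satisfy by q-character} the monomial $x[\alpha_{i,i}]$ is determined as $(x_i)^{-1}$ times the maximum of the two product monomials on the right. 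A short case distinction on $d_i$ (i.e.\ whether $i=i_\diamond$) and on the sign $\xi(i)=\xi(i+1)\pm1$ recovers the claimed answer: when $i=i_\diamond$ one gets $(Y_{i,\xi(i)\pm1})_r$, and when $i\neq i_\diamond$ one gets $(\omega(i,i+1))_r=(\omega(i,\overline{i}))_r$ since $\overline{i}=i+1$ in that case.

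For the inductive step I assume the formula holds for $L(x[\alpha_{i,j-1}])$ and all $L(x[\alpha_{i,j_\bullet-1}])$ (valid since $j_\bullet-1<j$), and apply Equation (\ref{mutation equations}). Again invoking Proposition \ref{lemma:equations satisfy by q-character}, $x[\alpha_{i,j}] = (x_j)^{-1}\cdot(\text{max of the two right-hand product monomials})$. I split into the two cases $d_j=1$ (so $j=j_\diamond$) and $d_j=0$. When $d_j=0$, the relevant term is $[L(x[\alpha_{i,j-1}])][L(f'_j)]^{d_{j-1}}[L(f''_j)]^{d_{j-1}}[L(x_{j+1})]$; substituting the inductive expression for $x[\alpha_{i,j-1}]$ — which is $(\omega(i,\overline{j-1}))_r$, and $\overline{j-1}=j$ precisely when $j-1=(j-1)_\diamond$, otherwise $\overline{j-1}=j_\bullet+1$ — and multiplying by $(Y_{j,\xi(j+1)})_r$ together with the appropriate frozen factors, I use Lemmas \ref{equation of omega1} and \ref{equation of omega2} to glue the product into $(\omega(i,j+1))_r=(\omega(i,\overline{j}))_r$. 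When $d_j=1$, the other bracket in (\ref{mutation equations}) contributes, and after the sub-case analysis on $\delta_{i,j_\bullet}$ and $\delta_{i_\bullet,j_\bullet}$ (which governs whether the inductive term is a genuine $x[\alpha_{i,j_\bullet-1}]$, a frozen variable $f'_i f''_i$, or involves $x_{i-1}$), the monomial reduces to $(\omega(i,j_\bullet+1))_r=(\omega(i,\overline{j}))_r$, or to $(Y_{i,\xi(i)\pm1})_r$ in the degenerate case $j=i_\diamond$.

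**The main obstacle I expect is the bookkeeping of spectral parameters in the gluing step.** The monomials $(\omega(i,m))_r$ are only specified "up to spectral parameter" throughout Section \ref{The study on HLr modules}, so I must verify that the product of $(\omega(i,\overline{j-1}))_r$ (or $(\omega(i,j_\bullet+1))_r$) with the fundamental-type factor $(Y_{j,*})_r$ lands on exactly the right shift to match $(\omega(i,\overline{j}))_r$ — this is where the height-function identities $\xi(j)+1=\xi(j+1)$ (when $j-1\neq(j-1)_\diamond$, $\xi(j-1)<\xi(j)$) and the parity conventions from Section \ref{section:recall HL-module} are essential, and Lemmas \ref{equation of omega1}, \ref{equation of omega2} are exactly designed to discharge it. A secondary subtlety is checking that the maximum in Proposition \ref{lemma:equations satisfy by q-character} is always attained by the correct one of the two terms; this follows because each factor $A_{i,s}^{-1}$ strictly lowers the weight, so comparing the two products reduces to comparing their total $A$-exponents, which the explicit forms make transparent. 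Once these two points are settled, the induction closes and the proof is complete.
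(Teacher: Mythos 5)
Your proposal is correct and follows essentially the same route as the paper: induction on $j-i$, the base case from the exchange relation (\ref{mutation equationsi=i}), the inductive step from (\ref{mutation equations}) with the case analysis governed by $d_j$, $d_{j-1}$, $\delta_{i,j_\bullet}$, $\delta_{i_\bullet,j_\bullet}$, and the gluing of monomials via Lemmas \ref{equation of omega1} and \ref{equation of omega2}. The only cosmetic difference is how the correct term of the exchange relation is selected: the paper simply observes that $x_j$ (a dominant factor of $m_1m_2$) does not divide the discarded product, which is a slightly cleaner criterion than your comparison of $A$-exponents, but both arguments rest on Proposition \ref{lemma:equations satisfy by q-character} in the same way.
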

				
\begin{proof}
We prove it for $\xi(j)<\xi(j+1)$ by induction on $j-i$. The case that $\xi(j)>\xi(j+1)$ is similar.
				
Assume that $j=i$. Then $\xi(i)<\xi(i+1)$ by hypothesis. If $j=i_{\diamond}$, then by Equation~(\ref{mutation equationsi=i}) we have
\[
[L(x_i)] [L(x[\alpha_{i,i}])] =[L(f'_i)] [L(f''_i)] + [L(x_{i-1})] [L(x_{i+1})].
\]
Since $x_i \nmid x_{i-1} x_{i+1}$, we have $x_i x[\alpha_{i,i}]=f'_i f''_i$ by Proposition \ref{lemma:equations satisfy by q-character}.  Hence
\[
x[\alpha_{i,i}]=\frac{f'_i f''_i}{x_{i}}=\frac{(Y_{i,\xi(i)+1})_{r-1} (Y_{i,\xi(i)-1})_{r+1}}{(Y_{i,\xi(i+1)})_r}=(Y_{i,\xi(i)-1})_r.
\]
If $j \neq i_{\diamond}$, then $\xi(i)<\xi(i+1)<\xi(i+2)$, and by Equation (\ref{mutation equationsi=i}),  		
\[
[L(x_i)] [L(x[\alpha_{i,i}])]=[L(f'_i)] [L(f''_i)] [L(x_{i+1})]+  [L(x_{i-1})][L(f'_{i+1})] [L(f''_{i+1})].
\]
Since $x_i \nmid x_{i-1} f'_{i+1} f''_{i+1} $, we have $x_i x[\alpha_{i,i}]= f'_i f''_i x_{i+1}$ by Proposition \ref{lemma:equations satisfy by q-character}. Hence 				
\begin{align*}
x[\alpha_{i,i}]=\frac{f'_i f''_i x_{i+1}}{x_i}&=\frac{(Y_{i,\xi(i)+1})_{r-1} (Y_{i,\xi(i)-1})_{r+1}  (Y_{i+1,\xi(i+2)})_r}{(Y_{i,\xi(i+1)})_r}\\
& =(Y_{i,\xi(i)-1})_r (Y_{i+1,\xi(i+2)})_{r}=(\omega(i,i+1))_r.
\end{align*}
Assume that our result holds for $i<j$. We prove it for $j+1$ by considering the following 8 cases.
				
{\bf Case 1.} Assume that $j+1<i_{\diamond}$.  Then $\xi(i)<\dots<\xi(j+1)<\xi(j+2)<\xi(j+3)$ by hypothesis. It follows from Equation (\ref{mutation equations}) that
\[
[L(x_{j+1})] [L(x[\alpha_{i,j+1}])]= [L(x[\alpha_{i,j}])] [L(x_{j+2})] +[L(x_{i-1})] [L(f'_{j+2})] [L(f''_{j+2})].
\]
Since $x_{j+1}\nmid x_{i-1}f'_{j+2} f''_{j+2}$, we have $x_{j+1} x[\alpha_{i,j+1}]= x[\alpha_{i,j}] x_{j+2}$ by Proposition \ref{lemma:equations satisfy by q-character}. Hence				
\begin{align*}
x[\alpha_{i,j+1}]=\frac{x[\alpha_{i,j}] x_{j+2}}{x_{j+1}}
=\frac{(\omega(i,j+1))_r x_{j+2}}{x_{j+1}}
&=\frac{(Y_{i,\xi(i)-1} Y_{j+1,\xi(j+1)+1})_r(Y_{j+2,\xi(j+3)})_r}{(Y_{j+1,\xi(j+2)})_r}\\
&=(Y_{i,\xi(i)-1})_r (Y_{j+2,\xi(j+3)})_r=(\omega(i,j+2))_r.
\end{align*}
				
{\bf Case 2.} Assume that $i_{\diamond}=j+1$. Then $\xi(i)<\dots<\xi(j+1)<\xi(j+2)$  by hypothesis. It follows from Equation (\ref{mutation equations}) that
\[
[L(x_{j+1})][L(x[\alpha_{i,j+1}])]=[L(x[\alpha_{i,j}])]+ [L(x_{i-1})][L(x_{j+2})].
\]
Since $x_{j+1} \nmid x_{i-1}x_{j+2}$, we have $x_{j+1}x[\alpha_{i,j+1}] =x[\alpha_{i,j}]$ by Proposition \ref{lemma:equations satisfy by q-character}. Hence				
\[
x[\alpha_{i,j+1}] =\frac{x[\alpha_{i,j}] }{x_{j+1}}=\frac{(\omega(i,j+1))_r}{x_{j+1}} =\frac{(Y_{i,\xi(i)-1})_r(Y_{j+1,\xi(j+1)+1})_r}{(Y_{j+1,\xi(j+2)})_r}=(Y_{i,\xi(i)-1})_r.
\]
				
{\bf Case 3.} Assume that $j=i_{\diamond}$ and $j+1=(j+1)_{\diamond}$. Then $d_{j-1}=0$ and $\xi(i)<\dots<\xi(j)<\xi(j+1)>\xi(j+2)$ by hypothesis.
It follows from Equation (\ref{mutation equations}) that
\[
[L(x_{j+1})] [L(x[\alpha_{i,j+1}])]=[L(x[\alpha_{i,j}])] [L(f'_{j+1})] [L(f''_{j+1})] +[L(x[\alpha_{i,j-1}])] [L(x_{j+2})].
\]
Since $x_{j+1} \nmid x[\alpha_{i,j-1}]x_{j+2}$, we have $x_{j+1} x[\alpha_{i,j+1}]= x[\alpha_{i,j}]  f'_{j+1} f''_{j+1}$ by Proposition \ref{lemma:equations satisfy by q-character}. Hence 				
\begin{align*}
x[\alpha_{i,j+1}]=\frac{x[\alpha_{i,j}] f'_{j+1} f''_{j+1} }{x_{j+1}} & = \frac{(Y_{i,\xi(i)-1})_r(Y_{j+1,\xi(j+1)+1})_{r-1}(Y_{j+1,\xi(j+1)-1})_{r+1}}{(Y_{j+1,\xi(j+2)})_r}\\
& =(Y_{i,\xi(i)-1})_r (Y_{j+1,\xi(j+1)+1})_{r}=(\omega(i,j+1))_r.
\end{align*}
				
{\bf Case 4.} Assume that $j=i_{\diamond}$ and $j+1\neq(j+1)_{\diamond}$. Then $d_{j-1}=0$ and $\xi(i)<\dots<\xi(j)<\xi(j+1)>\xi(j+2)>\xi(j+3)$ by hypothesis.
It follows from Equation (\ref{mutation equations}) that
\begin{gather*}
\begin{align*}
[L(x_{j+1})] [L(x[\alpha_{i,{j+1}}])]= [L(x[\alpha_{i,j}])] [L(f'_{j+1})] [L(f''_{j+1})] [L(x_{j+2})] +[L(x[\alpha_{i,j-1}])] [L(f'_{j+2})] [L(f''_{j+2})].
\end{align*}
\end{gather*}
 Since $x_{j+1} \nmid f'_{j+2}   f''_{j+2} x[\alpha_{i,j-1}]$, we have $x_{j+1} x[\alpha_{i,{j+1}}] =   x[\alpha_{i,j}] f'_{j+1} f''_{j+1} x_{j+2}$ by Proposition \ref{lemma:equations satisfy by q-character}. Hence 				
 \begin{align*}
x[\alpha_{i,{j+1}}]=\frac{x[\alpha_{i,j}] f'_{j+1} f''_{j+1} x_{j+2}}{x_{j+1}}
&=\frac{(Y_{i,\xi(i)-1})_r (Y_{j+1,\xi(j+1)+1})_{r-1}(Y_{j+1,\xi(j+1)-1})_{r+1} (Y_{j+2,\xi(j+3)})_r}{(Y_{j+1,\xi(j+2)})_r} \\
&=(Y_{i,\xi(i)-1})_r (Y_{j+1,\xi(j+1)+1} )_r (Y_{j+2,\xi(j+3)})_r=(\omega(i,j+2))_r.
\end{align*}
				
{\bf Case 5.} Assume that $i_{\diamond}<j+1$, $j\neq j_{\diamond}$, and $j+1=(j+1)_{\diamond}$. Then $j_{\bullet}=(j+1)_{\bullet}$ by hypothesis. It follows from Equation (\ref{mutation equations}) that
\begin{gather*}
\begin{align*}
[L(x_{j+1})] [L(x[\alpha_{i,j+1}])]=[L(x[\alpha_{i,j}])]+ [L(x_{j+2})] \big(  [L(f'_{i})]^{\delta_{i,(j+1)_{\bullet}}}[L(f''_{i})]^{\delta_{i,(j+1)_{\bullet}}}+(1-\delta_{i,(j+1)_{\bullet}})   \\ [L(f'_{(j+1)_\bullet})]^{d_{(j+1)_\bullet-1}} [L(f''_{(j+1)_\bullet})]^{d_{(j+1)_\bullet-1}} [L(x[\alpha_{i,(j+1)_\bullet-1}])]\big).
\end{align*}
\end{gather*}
Since $ x_{j+1} \nmid x_{j+2} \big( f'^{\delta_{i,(j+1)_{\bullet}}}_{i}f''^{\delta_{i,(j+1)_{\bullet}}}_{i} + (1-\delta_{i,(j+1)_{\bullet}}) f'^{d_{(j+1)_\bullet-1}}_{(j+1)_\bullet} f''^{d_{(j+1)_\bullet-1}}_{(j+1)_\bullet} x[\alpha_{i,(j+1)_\bullet-1}]
\big),$ we have $x_{j+1}x[\alpha_{i,j+1}]=x[\alpha_{i,j}]$ by Proposition \ref{lemma:equations satisfy by q-character}. Hence
\begin{align*}
x[\alpha_{i,j+1}]=\frac{x[\alpha_{i,j}]}{x_{j+1}}&=\frac{(\omega(i,j+1))_r}{(Y_{j+1,\xi(j+2)})_r}\\
&=(\omega(i,(j+1)_{\bullet}+1))_r \quad(\text{by Lemma}~\ref{equation of omega1}).
\end{align*}
				
{\bf Case 6.} Assume that $i_{\diamond}<j+1$, $j\neq j_{\diamond}$, and $j+1\neq(j+1)_{\diamond}$. Then $j_{\bullet}=(j+1)_{\bullet}$ by hypothesis. It follows from Equation (\ref{mutation equations}) that
\begin{gather*}
\begin{align*}
[L(x_{j+1})] [L(x[\alpha_{i,j+1}])]= [L(x[\alpha_{i,j}])] [L(x_{j+2})] + [L(f'_{j+2})] [L(f''_{j+2})] \big(
 [L(f'_i)]^{\delta_{i,(j+1)_{\bullet}}} [L(f''_{i})]^{\delta_{i,(j+1)_{\bullet}}} +\\
(1-\delta_{i,(j+1)_{\bullet}}) [L(f'_{(j+1)_\bullet})]^{d_{(j+1)_\bullet-1}} [L(f''_{(j+1)_\bullet})]^{d_{(j+1)_\bullet-1}} [L(x[\alpha_{i,(j+1)_\bullet-1}])]
\big).
\end{align*}
\end{gather*}
Since $ x_{j+1} \nmid f'_{j+2} f''_{j+2} ( f'^{\delta_{i,(j+1)_{\bullet}}}_i f''^{\delta_{i,(j+1)_{\bullet}}}_{i} + (1 -\delta_{i,(j+1)_{\bullet}}) f'^{d_{(j+1)_\bullet-1}}_{(j+1)_\bullet} f''^{d_{(j+1)_\bullet-1}}_{(j+1)_\bullet} x[\alpha_{i,(j+1)_\bullet-1}]
\big),$ we have $x_{j+1} x[\alpha_{i,j+1}]= x[\alpha_{i,j}] x_{j+2}$ by Proposition \ref{lemma:equations satisfy by q-character}.  Hence
\begin{align*}
x[\alpha_{i,j+1}]=\frac{x[\alpha_{i,j}] x_{j+2}}{x_{j+1}}&=\frac{(\omega(i,j+1))_r (Y_{j+2,\xi(j+3)})_r}{(Y_{j+1,\xi(j+2)})_r}\\
&=(\omega(i,(j+1)_{\bullet}+1))_r (Y_{j+2,\xi(j+3)})_r \quad(\text{by Lemma}~\ref{equation of omega1})\\
&=(\omega(i,j+2))_r \quad(\text{by Lemma}~\ref{equation of omega1}).
\end{align*}
				
{\bf Case 7.} Assume that $i_{\diamond}<j$, $j= j_{\diamond}$, and $j+1\neq(j+1)_{\diamond}$. Then $\xi(j)<\xi(j+1)>\xi(j+2)$ by hypothesis.
It follows from Equation (\ref{mutation equations}) that
\begin{align*}
[L(x_{j+1})] [L(x[\alpha_{i,j+1}])]= [L(x[\alpha_{i,j}])] [L(f'_{j+1})] [L(f''_{j+1})] [L(x_{j+2})] +[L(x[\alpha_{i,j-1}])] [L(f'_{j})]^{d_{j-1}} \\ 
[L(f''_{j})]^{d_{j-1}}  [L(f'_{j+2})][L(f''_{j+2})].
 \end{align*}
Since $x_{j+1} \nmid x[\alpha_{i,j-1}]  f'^{d_{j-1}}_{j}f''^{d_{j-1}}_{j} f'_{j+2}f''_{j+2}$, we have $x_{j+1} x[\alpha_{i,j+1}] = x[\alpha_{i,j}] f'_{j+1} f''_{j+1} x_{j+2}$ by Proposition \ref{lemma:equations satisfy by q-character}. Hence 	\begin{gather*}
\begin{align*}
x[\alpha_{i,j+1}] =\frac{x[\alpha_{i,j}]f'_{j+1} f''_{j+1} x_{j+2}}{x_{j+1}}
&=\frac{(\omega(i,j_{\bullet}+1))_r(Y_{j+1,\xi(j+1)+1})_{r-1}(Y_{j+1,\xi(j+1)-1})_{r+1}(Y_{j+2,\xi(j+3)})_r}{(Y_{j+1,\xi(j+2)})_r}\\
&=(\omega(i,j_{\bullet}+1))_r (Y_{j+1,\xi(j+1)+1})_{r}(Y_{j+2,\xi(j+3)})_r\\
&=(\omega(i,j+1))_r (Y_{j+2,\xi(j+3)})_r  \quad(\text{by Lemma}~\ref{equation of omega2})\\
&=(\omega(i,(j+2)_{\bullet}+1))_r (Y_{j+2,\xi(j+3)})_r\\
&=(\omega(i,j+2))_r \quad(\text{by Lemma}~\ref{equation of omega1}).
\end{align*}
\end{gather*}
				
{\bf Case 8.} Assume that $i_{\diamond}<j$, $j= j_{\diamond}$, and $j+1=(j+1)_{\diamond}$. Then $\xi(j)<\xi(j+1)>\xi(j+2)$ by hypothesis.
It follows from Equation (\ref{mutation equations}) that
\begin{gather*}
\begin{align*}
[L(x_{j+1})] [L(x[\alpha_{i,{j+1}}])]=[L(x[\alpha_{i,j}])] [L(f'_{j+1})][L(f''_{j+1})]  +[L(x[\alpha_{i,j-1}])]   [L(f'_{j})]^{d_{j-1}} [L(f''_{j})]^{d_{j-1}} [L(x_{j+2})].
\end{align*}
\end{gather*}
Since $x_{j+1} \nmid x[\alpha_{i,j-1}]  f'^{d_{j-1}}_{j} f''^{d_{j-1}}_{j} x_{j+2}$, we have $x_{j+1} x[\alpha_{i,{j+1}}]= x[\alpha_{i,j}] f'_{j+1} f''_{j+1}$ by Proposition \ref{lemma:equations satisfy by q-character}.  Hence
\begin{align*}
x[\alpha_{i,{j+1}}]=\frac{x[\alpha_{i,j}] f'_{j+1} f''_{j+1} }{x_{j+1}}
&=\frac{(\omega(i,j_{\bullet}+1))_r (Y_{j+1,\xi(j+1)+1})_{r-1}(Y_{j+1,\xi(j+1)-1})_{r+1}}{(Y_{j+1,\xi(j+2)})_r}\\
&= ( \omega(i,j_{\bullet}+1))_r (Y_{j+1,\xi(j+1)+1})_{r} \\
&=( \omega(i,j+1))_r \quad(\text{by}~\text{Lemma}~\ref{equation of omega2}).
\end{align*}
By the principle of induction, our assertion is true. The proof is completed.
\end{proof}
			
Now we are in the place for one of our main theorems.
			
\begin{theorem}\label{theorem:real and prime}
The equivalence classes of $\HL$-modules are cluster variables in $\seed$. In particular, $\HL$-modules are real prime simple modules.
\end{theorem}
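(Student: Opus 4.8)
The plan is to read Theorem~\ref{theorem:real and prime} as the combination of an easy inclusion, supplied directly by Lemma~\ref{Lem:Hight weight monomial}, and a realization statement, which is the real content: every $\HL$-monomial must be produced as a cluster variable $\mathbf{x}[\alpha_{i,j}]$ (or an initial cluster variable) of $\mathscr{A}(\mathbf{x},\Q)$ for a suitably chosen height function; the last sentence of the theorem then follows at once from Lemma~\ref{theorem of real prime}. For the easy half: under the isomorphism of Theorem~\ref{mutation equivalent}, Lemma~\ref{Lem:Hight weight monomial} sends $\mathbf{x}[\alpha_{i,j}]$ to $[L((Y_{i,\xi(i)\pm1})_r)]$ when $j=i_\diamond$ and to $[L((\omega(i,\overline j))_r)]$ when $j\ne i_\diamond$, while the initial cluster variables $\mathbf{x}_i$ go to $[L((Y_{i,\xi(i+1)})_r)]$; by~\eqref{Brito-Chari notation} and Definition~\ref{definition of GHL} all of these highest $l$-weight monomials have the shape~\eqref{Monomial of HLr}, so these cluster variables are equivalence classes of $\HL$-modules.

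For the converse, let $L(M)$ be an $\HL$-module with $M=(Y_{i_1,a_1}\cdots Y_{i_k,a_k})_r$ subject to conditions~(1)--(3) of Definition~\ref{definition of GHL}. After a global spectral shift---which preserves the $\HL$-module property and, using condition~(3) (whence $a_j-i_j$ is independent of $j$ modulo $2$), can be arranged so that all spectral parameters of $M$ have the parity needed for membership in $\mathcal{P}^{+}_{\ell}$---we may assume $M\in\mathcal{P}^{+}_{\ell}$ for $\ell$ large. If $k=1$ then $M=X^{(a_1)}_{i_1,r}$ is a Kirillov--Reshetikhin module, hence an initial cluster variable of $\seed$ once $r\le\ell$; so assume $k\ge2$. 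I would build a height function $\xi$ on $[1,n]$ with $n>i_k$, whose strict local extrema in the open interval $(i_1,i_k)$ are exactly $i_2,\dots,i_{k-1}$, whose successive ascents and descents alternate according to the signs of the differences $a_j-a_{j-1}$ (legitimate by condition~(2)), and which is strictly monotone on $[1,i_1]$ and on $[i_k,n]$. Condition~(3), namely $|a_j-a_{j-1}|=(i_j-i_{j-1})+2$, is exactly what makes such a $\xi$ consistent: between the consecutive extrema $i_{j-1}$ and $i_j$ the function $\xi$ moves monotonically in unit steps, hence by $\pm(i_j-i_{j-1})$, and the endpoint offsets built into~\eqref{Brito-Chari notation} account for the remaining $\pm2$. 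Choosing the additive constant of $\xi$ to align the values, the definition of $\omega$ in~\eqref{Brito-Chari notation} yields $\omega(i_1,i_k)=Y_{i_1,a_1}\cdots Y_{i_k,a_k}$ up to an immaterial global shift. Since $\xi$ is monotone through $i_k$ and $n>i_k$, one checks $i_k-1\ne(i_k-1)_\diamond$ and $i_k-1\ne i_\diamond$, so with $j:=i_k-1\ge i_1$ one has $j\ne j_\diamond$ and $\overline j=j+1=i_k$; Lemma~\ref{Lem:Hight weight monomial} then gives $\mathbf{x}[\alpha_{i_1,j}]\mapsto[L((\omega(i_1,i_k))_r)]=[L(M)]$, so $[L(M)]$ is a cluster variable of $\seed$.

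The step I expect to be the main obstacle is the combinatorial bookkeeping of this construction: one must verify that the prescribed slope pattern together with condition~(3) genuinely defines a height function (the unit-step condition at every vertex, including at $i_1$, at $i_k$, and throughout the monotone extensions, where the conventions $\xi(0)=\xi(2)$ and $\xi(n+1)=\xi(n-1)$ must also be honoured), that $\omega(i_1,i_k)$ for this $\xi$ has marked vertices precisely $i_1<i_2<\dots<i_k$ with no spurious ones, and that the degenerate possibilities---some $i_j$ adjacent to $i_{j\pm1}$, indices lying near $1$ or $n$, and the alternative value $\overline j=j_\bullet+1$ that occurs when $j=j_\diamond$---are all absorbed (each merely forces a different but equally admissible choice of $\xi$ and $j$). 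Granting the realization, the final statement is immediate: a cluster variable of $\seed=K_0(\mathscr{C}_\ell)$ is, by Lemma~\ref{theorem of real prime}, the equivalence class of a real prime simple module, so every $\HL$-module is simple, real, and prime.
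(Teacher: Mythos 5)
Your proposal is correct and takes essentially the same route as the paper: construct a height function tailored to the given monomial, identify the class of the $\HL$-module as the cluster variable $\mathbf{x}[\alpha_{i_1,\,\cdot}]$ via Lemma \ref{Lem:Hight weight monomial}, and conclude reality and primeness from Lemma \ref{theorem of real prime}, the only deviation being cosmetic (you let $\xi$ run monotonically through $i_k$ and use $\mathbf{x}[\alpha_{i_1,i_k-1}]$ with $\overline{i_k-1}=i_k$, while the paper turns at $i_k$ and uses $\mathbf{x}[\alpha_{i_1,i_k}]$). One minor imprecision: for $k=1$ the module $L(X^{(a_1)}_{i_1,r})$ is an \emph{initial} cluster variable only if the global spectral shift is chosen so that its top factor is $Y_{i_1,\xi_{i_1}}$; otherwise argue as the paper does, via Lemma \ref{Lem:Hight weight monomial} with a height function making $i_1=(i_1)_\diamond$, which does not affect the rest of your argument.
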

\begin{proof}
Let $L((Y_{i_1,a_1} Y_{i_2,a_2}\dots Y_{i_k,a_k})_r)$ be an $\HL$-module. Then $(Y_{i_1,a_1} Y_{i_2,a_2}\dots Y_{i_k,a_k})_r$ is of the form (\ref{Monomial of HLr}).  Assume that $k=1$. Then there is a height function $\xi$ subject to $\xi(i_1)=\xi(i_1+2)$ for a large enough $n$, and
\[
\xi(i_1)=a_1\mp1~\text{ if }~\xi(i_1)=\xi(i_1+1)\pm1.
\]
Note that $\xi(i_1)=\xi(i_1+2)$ implies that $i_1=(i_1)_\diamond$.  By Lemma \ref{Lem:Hight weight monomial}, we have  $x[\alpha_{i_1,i_1}]=(Y_{i_1,a_1})_r$.

Assume that $k = 2$ and $i_{2}-i_{1}\geq 1$. Then there is a height function $\xi$ subject to
\[
\begin{cases}
\xi(i_1)>\dots>\xi(i_2-1)>\xi(i_2)<\xi(i_2+1)>\xi(i_2+2)\quad \quad &\text{if}~a_1>a_2,\\
\xi(i_1)<\dots<\xi(i_2-1)<\xi(i_2)>\xi(i_2+1)<\xi(i_2+2) \quad\quad &\text{if}~a_1<a_2.
\end{cases}
\]
Note that $\xi(i_1)>\dots>\xi(i_2-1)>\xi(i_2)<\xi(i_2+1)>\xi(i_2+2)$ (respectively, $\xi(i_1)<\dots<\xi(i_2-1)<\xi(i_2)>\xi(i_2+1)<\xi(i_2+2)$) implies that $(i_1)_\diamond=i_{2}-1$, $i_2=(i_2)_\diamond$.
By Lemma \ref{Lem:Hight weight monomial}, we have  $x[\alpha_{i_1,i_2}]=(\omega(i_1,\overline{i_2}))_r=(\omega(i_1,i_2))_r=(Y_{i_1,a_1}Y_{i_2,a_2})_r$.

Assume that $k > 2$ and $i_{k}-i_{k-1}\geq 1$. Then there is a height function $\xi$ subject to
\[
\xi(i_s-1)=\xi(i_s+1)~\text{for}~2\leq s\leq k-1,
\]
and
\begin{gather*}
\begin{align*}
\begin{cases}
\xi(i_1)>\xi(i_2),\xi(i_{k-1})>\dots>\xi(i_k-1)>\xi(i_k)<\xi(i_k+1)>\xi(i_k+2)\quad &\text{if}~a_1>a_2,a_{k-1}>a_k,\\
\xi(i_1)<\xi(i_2),\xi(i_{k-1})>\dots>\xi(i_k-1)>\xi(i_k)<\xi(i_k+1)>\xi(i_k+2)\quad &\text{if}~a_1<a_2,a_{k-1}>a_k,\\
\xi(i_1)>\xi(i_2),\xi(i_{k-1})<\dots<\xi(i_k-1)<\xi(i_k)>\xi(i_k+1)<\xi(i_k+2)\quad &\text{if}~a_1>a_2,a_{k-1}<a_k,\\
\xi(i_1)<\xi(i_2),\xi(i_{k-1})<\dots<\xi(i_k-1)<\xi(i_k)>\xi(i_k+1)<\xi(i_k+2)\quad &\text{if}~a_1<a_2,a_{k-1}<a_k.\\
\end{cases}
\end{align*}
\end{gather*}
Note that $\xi(i_{k-1})>\dots>\xi(i_k-1)>\xi(i_k)<\xi(i_k+1)>\xi(i_k+2)$ (respectively, $\xi(i_{k-1})<\dots<\xi(i_k-1)<\xi(i_k)>\xi(i_k+1)<\xi(i_k+2)$) implies that $(i_{k-1})_\diamond=i_k-1$, $i_k=(i_k)_\diamond$.
By Lemma \ref{Lem:Hight weight monomial}, we have $x[\alpha_{i_1,i_k}]=(\omega(i_1,\overline{i_k}))_r=(\omega(i_1,i_k))_r=(Y_{i_1,a_1}\cdots Y_{i_k,a_k})_r$.

Therefore, the equivalence classes of $\HL$-modules are cluster variables in $\seed$, and so $\HL$-modules are real prime simple modules by Lemma \ref{theorem of real prime}.
\end{proof}

Given a height function $\xi$, the associated quiver $Q_\xi$ is defined in Section \ref{section:definition of cluster algebra}. If we delete the vertices $\{1',\dots,n'\}$ and the arrows incident to them, then the obtained quiver is opposite to the one defined in \cite{BC19}. Moreover, in $Q_\xi$, for any $i\in I$, the arrows incident to $\mathbf{f}'_i$ and the arrows incident to $\mathbf{f}''_i$ are symmetric with respect to the principal quiver. Replace $f_j$ by $\mathbf{f}'_j \mathbf{f}''_j$ in \cite[Proposition 2.5]{BC19}, we get a $q$-character formula of $\HL$-modules.

In \cite{DLL2021}, the authors gave a recursive $q$-character formula of HL-modules by induction on the length of the highest $l$-weight monomials of HL-modules. Likewise, replace $f_j$ by $\mathbf{f}'_j \mathbf{f}''_j$, we obtain the following proposition.

\begin{proposition}\label{prop:equation system}
Let $\xi$ be a height function such that for $1\leq i < j+1 \leq n$, $j_{\diamond}=j$, $(j+1)_{\diamond}=j+1$. Then
\begin{gather}\label{equation system}
\begin{aligned}[]
[L(x[\alpha_{i,j+1}])] = [L(x[\alpha_{i,j}])] [L(x[\alpha_{j+1,j+1}])] - [L(H_{i,\max\{i-1,j_{\bullet}-1\}})]^{1-\delta_{i,j_{\bullet}}} [L(f'_{\max\{i,j_{\bullet}\}})]^{\min\{1,(1-\delta_{j_{\bullet},i_{\bullet}})d_{j_{\bullet}-1}+\delta_{j_{\bullet},i}\}} \\
[L(f''_{\max\{i,j_{\bullet}\}})]^{\min\{1,(1-\delta_{j_{\bullet},i_{\bullet}})d_{j_{\bullet}-1}+\delta_{j_{\bullet},i}\}} [L(x[-\alpha_{j+2}])],
\end{aligned}
\end{gather}
where $H_{i,\max\{i-1,j_{\bullet}-1\}}=x[\alpha_{i,\max\{i-1,j_{\bullet}-1\}}]$ if $j_{\bullet}>i$, otherwise $H_{i,\max\{i-1,j_{\bullet}-1\}}=x[-\alpha_{i-1}]$. More
precisely,
\begin{equation}\label{equation: three equations}
\begin{cases}
[L(x[\alpha_{i,j+1}])]=[L(x[\alpha_{i,j}])] [L(x[\alpha_{j+1,j+1}])] - [L(x_{i-1})] [L(x_{j+2})] \quad  & \text{if}~j_{\bullet}=i_{\bullet},\\
[L(x[\alpha_{i,j+1}])] = [L(x[\alpha_{i,j}])] [L(x[\alpha_{j+1,j+1}])]- [L(f'_i)] [L(f''_i)] [L(x_{j+2})] \quad &\text{if}~j_{\bullet}=i,\\
[L(x[\alpha_{i,j+1}])] = [L(x[\alpha_{i,j}])] [L(x[\alpha_{j+1,j+1}])] - [L(x[\alpha_{i,j_{\bullet-1}}])] [L(f'_{j_{\bullet}})] [L(f''_{j_{\bullet}})] [L(x_{j+2})]\quad &\text{if}~j_{\bullet}> i~\text{and}~j_{\bullet}-1=(j_{\bullet}-1)_{\diamond},\\
[L(x[\alpha_{i,j+1}])] = [L(x[\alpha_{i,j}])] [L(x[\alpha_{j+1,j+1}])] - [L(x[\alpha_{i,j_{\bullet-1}}])] [L(x_{j+2})]\quad &\text{if}~j_{\bullet}> i~\text{and}~j_{\bullet}-1\neq(j_{\bullet}-1)_{\diamond}.
\end{cases}
\end{equation}
\end{proposition}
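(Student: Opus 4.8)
Proof proposal for Proposition \ref{prop:equation system}.

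The plan is to deduce (\ref{equation system}) by combining three exchange relations in $\seed$: the mutation relations produced by the last step of the sequences yielding $[L(x[\alpha_{i,j}])]$ and $[L(x[\alpha_{i,j+1}])]$ (whose form is read off from Lemma \ref{arrows accident with j}), together with Equation (\ref{mutation equationsi=i}) at index $j+1$. Since $\seed$ is a domain, a common factor such as $[L(x_{j+1})]$ may be cancelled. First I would translate the hypotheses: $j_\diamond=j$ and $(j+1)_\diamond=j+1$ say precisely that $d_j=d_{j+1}=1$, and since $j=j_\diamond$ forces $1_\diamond\le j$, the index $j$ is the largest element of $[1_\diamond,j+1)$ equal to its own $\diamond$, hence $(j+1)_\bullet=j$.

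Take $i<j$ first (the case $i=j$ is entirely analogous and gives $R=[L(x_{i-1})]$). Specialized to $d_j=d_{j+1}=1$ and $(j+1)_\bullet=j$, the three relations are: at index $j+1$, $[L(x_{j+1})][L(x[\alpha_{i,j+1}])]=[L(f'_j)]^{d_{j-1}}[L(f''_j)]^{d_{j-1}}[L(x[\alpha_{i,j-1}])][L(x_{j+2})]+[L(f'_{j+1})][L(f''_{j+1})][L(x[\alpha_{i,j}])]$; at index $j$, rearranged, $[L(f'_j)]^{d_{j-1}}[L(f''_j)]^{d_{j-1}}[L(x[\alpha_{i,j-1}])]=[L(x_j)][L(x[\alpha_{i,j}])]-[L(x_{j+1})]\,R$, where $R=[L(f'_{\max\{i,j_\bullet\}})]^{b_j}[L(f''_{\max\{i,j_\bullet\}})]^{b_j}[L(H_{i,\max\{i-1,j_\bullet-1\}})]^{a_j}$ is a single monomial with $a_j=1-\delta_{i,j_\bullet}$ and $b_j=\min\{1,(1-\delta_{j_\bullet,i_\bullet})d_{j_\bullet-1}+\delta_{j_\bullet,i}\}$; and Equation (\ref{mutation equationsi=i}) at $j+1$, $[L(f'_{j+1})][L(f''_{j+1})]=[L(x_{j+1})][L(x[\alpha_{j+1,j+1}])]-[L(x_j)][L(x_{j+2})]$. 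Feeding the last two into the first, the two resulting copies of $[L(x_j)][L(x_{j+2})][L(x[\alpha_{i,j}])]$ cancel, and one obtains $[L(x_{j+1})][L(x[\alpha_{i,j+1}])]=[L(x_{j+1})]\bigl([L(x[\alpha_{i,j}])][L(x[\alpha_{j+1,j+1}])]-[L(x_{j+2})]\,R\bigr)$; cancelling $[L(x_{j+1})]$ yields (\ref{equation system}), the negative term being $[L(x_{j+2})]\,R$.

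It then remains to unpack $R$ by the position of $i$ relative to $i_\bullet$ and $j_\bullet$: for $j_\bullet=i_\bullet$ one has $a_j=1$, $b_j=0$, $H=x[-\alpha_{i-1}]$, so $R=[L(x_{i-1})]$; for $j_\bullet=i$ one has $a_j=0$, $b_j=1$, so $R=[L(f'_i)][L(f''_i)]$; and for $j_\bullet>i$ one has $a_j=1$, $H=x[\alpha_{i,j_\bullet-1}]$, $b_j=d_{j_\bullet-1}$, so $R=[L(x[\alpha_{i,j_\bullet-1}])]$ or $R=[L(f'_{j_\bullet})][L(f''_{j_\bullet})][L(x[\alpha_{i,j_\bullet-1}])]$ according as $j_\bullet-1$ is or is not equal to $(j_\bullet-1)_\diamond$. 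These are the four cases of (\ref{equation: three equations}), and (\ref{equation system}) is their uniform form. I expect the main (though routine) difficulty to be precisely this bookkeeping — verifying that each mutation specializes as claimed under $d_j=d_{j+1}=1$, that $R$ is a single monomial in every configuration, and that the boundary conventions ($\mathbf{x}_0=\mathbf{x}_{n+1}=1$, $j_\bullet=0$ when $j=1_\diamond$) are respected. There is also a shorter conceptual route: the identity concerns only cluster variables of $\mathscr{A}(\mathbf{x},\Q)$, so it depends on the quiver $Q_\xi$ alone and not on $r$, and for $r=1$ it is the recursion of \cite{DLL2021} with $\mathbf{f}'_j\mathbf{f}''_j$ in the role of the single frozen variable $f_j$ (legitimate since $\mathbf{f}'_j$ and $\mathbf{f}''_j$ have symmetric neighbourhoods in $Q_\xi$); hence it holds for all $r$.
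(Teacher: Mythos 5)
Your derivation is correct and takes a genuinely different route from the paper. The paper gives essentially no computation here: it obtains Proposition \ref{prop:equation system} by noting that in $\Q$ the arrows at $\mathbf{f}'_j$ and at $\mathbf{f}''_j$ are symmetric with respect to the principal quiver, so the recursive formula of \cite{DLL2021} for HL-modules ($r=1$) transfers verbatim after replacing $f_j$ by $\mathbf{f}'_j\mathbf{f}''_j$ --- exactly the ``shorter conceptual route'' you sketch at the end. Your main argument instead proves (\ref{equation system}) internally: you specialize the paper's own exchange relations (Lemma \ref{arrows accident with j}, i.e.\ Corollary \ref{prop:mutation equations}, at indices $j$ and $j+1$ under $d_j=d_{j+1}=1$ and $(j+1)_\bullet=j$, together with (\ref{mutation equationsi=i}) at $j+1$), substitute, watch the two copies of $[L(x_j)][L(x_{j+2})][L(x[\alpha_{i,j}])]$ cancel, and remove the common factor $[L(x_{j+1})]$, which is legitimate since $K_0(\mathscr{C}_{\ell})$ is a polynomial ring in the classes of fundamental modules, hence a domain. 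This is more self-contained (no appeal to \cite{DLL2021}), and your reading of the bracketed factor in Corollary \ref{prop:mutation equations} as the single monomial $[L(f'_{\max\{i,j_\bullet\}})]^{b_j}[L(f''_{\max\{i,j_\bullet\}})]^{b_j}[L(H_{i,\max\{i-1,j_\bullet-1\}})]^{a_j}$ is the correct one dictated by Lemma \ref{arrows accident with j} (taken literally, the corollary's formula would produce a spurious summand $1$ when $j_\bullet>i$). What the paper's route buys is brevity and the observation that the identity is independent of $r$; what yours buys is a checkable proof inside the paper.

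One slip in your final bookkeeping: in the case $j_\bullet>i$ your ``according as'' clause pairs the two values of $R$ the wrong way around. Since $b_j=d_{j_\bullet-1}=\delta_{j_\bullet-1,(j_\bullet-1)_\diamond}$, the frozen factors appear precisely when $j_\bullet-1=(j_\bullet-1)_\diamond$, i.e.\ $R=[L(f'_{j_\bullet})][L(f''_{j_\bullet})][L(x[\alpha_{i,j_\bullet-1}])]$ in that case and $R=[L(x[\alpha_{i,j_\bullet-1}])]$ otherwise, matching the third and fourth lines of (\ref{equation: three equations}); as written, your sentence contradicts your own formula $b_j=d_{j_\bullet-1}$. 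This is a wording inversion, not a gap in the argument.
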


Given an $\HL$-module, we can always construct a height function $\xi$ subject to $\xi(j) = \xi(j+2)$ and $\xi(j+1)=\xi(j+3)$, as shown in the proof of Theorem \ref{theorem:real and prime}, such that $L(x[\alpha_{i,j+1}])$ is the $\HL$-module. Note that conditions $\xi(j) = \xi(j+2)$ and $\xi(j+1)=\xi(j+3)$ imply $j_{\diamond}=j$ and $(j+1)_{\diamond}=j+1$. Then the formula (\ref{equation system}) provides a recursive $q$-character formula for $\HL$-modules.
				
By Lemma \ref{Lem:Hight weight monomial}, the formula (\ref{equation: three equations}) is equivalent to the following theorem.

\begin{theorem}
Let $L((Y_{i_1,a_i}\cdots Y_{i_k,a_k})_r)$ be an $\HL$-module. 			
If $k=2$ and $a_1=a_2\pm (i_2-i_1+2)$, then
\[
[L((Y_{i_1,a_1} Y_{i_2,a_2})_r)] = [L((Y_{i_1,a_1})_r)][L((Y_{i_2,a_2})_r)] -[L((Y_{i_1-1,a_1\mp1})_r)] [L((Y_{i_2+1,a_2\pm1})_r)].
\]
If $k=3$, $i_2-i_1=1$ and $a_2=a_3\pm (i_3-i_2+2)$, then
\begin{gather*}
\begin{align*}
[L((Y_{i_1,a_1}Y_{i_2,a_2}Y_{i_3,a_3})_r)] = [L((Y_{i_1,a_1}Y_{i_2,a_2})_r)] [L((Y_{i_3,a_3})_r)] - [L((Y_{i_1,a_1})_{r\pm1})] [L((Y_{i_1,a_1\pm2})_{r\mp1})] [L((Y_{i_3+1,a_3\pm1})_r)].
\end{align*}
\end{gather*}
If $k>3$, $i_{k-1}-i_{k-2}=1$ and $a_{k-1}=a_{k}\pm(i_{k}-i_{k-1}+2)$, then
\begin{gather*}
\begin{align*}
[L((Y_{i_1,a_1}\cdots Y_{i_{k},a_{k}})_r)] =    [L((Y_{i_1,a_1}\cdots Y_{i_{k-1},a_{k-1}})_r)] [L((Y_{i_{k},a_{k}})_r)] -[L((Y_{i_1,a_1}\cdots Y_{i_{k-3},a_{k-3}})_r)] [L((Y_{i_{k-2},a_{k-2}})_{r\pm1})]\\
 [L((Y_{i_{k-2},a_{k-2}\pm2})_{r\mp1})] [L((Y_{i_{k}+1,a_{k}\pm1})_r)].
\end{align*}
\end{gather*}
If $k\geq3$, $i_{k-1}-i_{k-2}>1$ and $a_{k-1}=a_{k}\pm(i_{k}-i_{k-1}+2)$, then
\begin{gather*}
\begin{align*}
[L((Y_{i_1,a_1}\cdots Y_{i_{k},a_{k}})_r)] =  [L((Y_{i_1,a_1}\cdots Y_{i_{k-1},a_{k-1}})_r)] [L((Y_{i_{k},a_{k}})_r)] -[L((Y_{i_1,a_1}\cdots Y_{i_{k-2},a_{k-2}}Y_{i_{k-1}-1,a_{k-1}\mp1})_r)]  \\
[L( (Y_{i_{k}+1,a_{k}\pm1})_r)].
\end{align*}
\end{gather*}
\end{theorem}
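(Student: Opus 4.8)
The plan is to obtain each of the four identities by specializing the corresponding branch of the recursion~(\ref{equation: three equations}) of Proposition~\ref{prop:equation system}, after choosing a suitable height function and then translating every cluster variable occurring in that branch back into a highest $l$-weight monomial via Lemma~\ref{Lem:Hight weight monomial}. Denote the four statements by (A)--(D) in the order listed, and fix an $\HL$-module $L((Y_{i_1,a_1}\cdots Y_{i_k,a_k})_r)$ satisfying the hypotheses of one of them. First I would build, exactly as in the proof of Theorem~\ref{theorem:real and prime} and the remark just before the statement, a height function $\xi$ (with $n$ large enough that $1<i_1$ and $i_k<n$) with $\xi(i_s-1)=\xi(i_s+1)$ for $2\le s\le k-1$, with $\xi$ strictly monotone on each interval $[i_s,i_{s+1}]$ and to the left of $i_1$, and with $\xi(i_k-1)=\xi(i_k+1)$ and $\xi(i_k)=\xi(i_k+2)$. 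Then $j:=i_k-1$ satisfies $j_\diamond=j$ and $(j+1)_\diamond=j+1$, so Proposition~\ref{prop:equation system} applies with $i=i_1$, and $L(x[\alpha_{i_1,i_k}])$ is the given $\HL$-module by Lemma~\ref{Lem:Hight weight monomial}.

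Second, I would identify which branch of~(\ref{equation: three equations}) is in force by computing the $\diamond$- and $\bullet$-data attached to $\xi$. With the monotone left extension, one checks: in case (A), $k=2$, so $1_\diamond=i_2-1=j$ and therefore $j_\bullet=i_\bullet=0$, which is branch $1$; in case (B), $k=3$ and $i_2-i_1=1$, so $1_\diamond=i_1$ and there is no $\diamond$-point strictly between $i_1$ and $i_3-1$, hence $j_\bullet=i_1=i$, branch $2$; in case (C), $k>3$ and $i_{k-1}-i_{k-2}=1$, so the largest $\diamond$-point below $i_k-1$ is $i_{k-1}-1=i_{k-2}>i_1$, and $\xi(j_\bullet-1)=\xi(j_\bullet+1)$ forces $j_\bullet-1=(j_\bullet-1)_\diamond$, branch $3$; in case (D), $k\ge3$ and $i_{k-1}-i_{k-2}>1$, so again $j_\bullet=i_{k-1}-1>i_1$, but $\xi$ is strictly monotone near $j_\bullet-1$, so $j_\bullet-1\ne(j_\bullet-1)_\diamond$, branch $4$.

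Third, I would rewrite each factor occurring in the active branch using Lemma~\ref{Lem:Hight weight monomial}, together with Lemmas~\ref{equation of omega1} and~\ref{equation of omega2} for the $\omega$-truncations, namely: $x[\alpha_{i_1,i_k}]=(\omega(i_1,i_k))_r=(Y_{i_1,a_1}\cdots Y_{i_k,a_k})_r$; $x[\alpha_{i_1,i_k-1}]=(\omega(i_1,j_\bullet+1))_r$, which is $(Y_{i_1,a_1})_r$ in (A), $(Y_{i_1,a_1}Y_{i_2,a_2})_r$ in (B), and $(Y_{i_1,a_1}\cdots Y_{i_{k-1},a_{k-1}})_r$ in (C) and (D); $x[\alpha_{i_k,i_k}]=(Y_{i_k,a_k})_r$; $x_{i_k+1}=(Y_{i_k+1,a_k\pm1})_r$; the frozen products $f'_{i_1}f''_{i_1}$ in (B) and $f'_{j_\bullet}f''_{j_\bullet}$ (with $j_\bullet=i_{k-2}$) in (C) become $(Y_{\ast,a_\ast})_{r\pm1}(Y_{\ast,a_\ast\pm2})_{r\mp1}$; $x[\alpha_{i_1,j_\bullet-1}]$ is $(Y_{i_1,a_1}\cdots Y_{i_{k-3},a_{k-3}})_r$ in (C) and $(Y_{i_1,a_1}\cdots Y_{i_{k-2},a_{k-2}}Y_{i_{k-1}-1,a_{k-1}\mp1})_r$ in (D); and $x_{i_1-1}=(Y_{i_1-1,a_1\mp1})_r$, $x_{i_2+1}=(Y_{i_2+1,a_2\pm1})_r$ in (A). Plugging these into the four branches of~(\ref{equation: three equations}) gives exactly the four displayed equations, the alternate global sign being symmetric. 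The main obstacle is this third step together with the branch identification: one must hold a single sign convention throughout (whether $\xi$ rises or falls at $i_{k-1}$), which simultaneously pins down every $\pm$ and $\mp$ in the formulas and decides whether auxiliary indices such as $j_\bullet-1$ are $\diamond$-points, and one must track the behaviour of the $\omega$-truncations at $j_\bullet+1$ and at $j_\bullet-1$; no new representation-theoretic input is required, since Proposition~\ref{lemma:equations satisfy by q-character} already sanctions reading the cluster exchange relations as $q$-character identities.
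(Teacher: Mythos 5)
Your proposal is correct and follows essentially the same route as the paper: the paper's proof is precisely the observation that each factor in the four branches of the recursion (\ref{equation: three equations}) of Proposition \ref{prop:equation system} is an $\HL$-module whose highest $l$-weight monomial is computed by Lemma \ref{Lem:Hight weight monomial} (via the height function built as in Theorem \ref{theorem:real and prime}), which is exactly your specialization-and-translation argument. Your additional bookkeeping identifying which branch occurs via the $\diamond$- and $\bullet$-data is consistent with the paper and simply makes explicit what the paper leaves implicit.
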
		

\begin{proof}
It follows from Lemma \ref{Lem:Hight weight monomial} that the highest $l$-weight monomial of each $\HL$-module $L(x[\alpha_{i, j}])$ in the formula (\ref{equation: three equations}) is the desired expression. 
\end{proof}

\section{Generalized HL-modules}\label{section of generalizedHL}
In this section, we further generalize the concept of $\HL$-modules and introduce the notion of generalized HL-modules, we will prove that all generalized HL-modules are real and prime.

\subsection{Definition of generalized HL-modules}
\begin{definition}\label{definition of generalizedHL}
A generalized HL-module is a simple $U_q(\widehat{\mathfrak{g}})$-module with the highest $l$-weight monomial
\[
(Y_{i_1,a_1})_{r,r_1}(Y_{i_2,a_2})_{r,r_2}\dots (Y_{i_k,a_k})_{r,r_k},
\]
where $ (Y_{i_j,a_j})_{r,r_j} =\begin{cases}
(Y_{i_j,a_j})_{r+r_j} & \text{if}~r_j\leq0,  \\
(Y_{i_j, a_j-2r_j})_{r+r_j} & \text{if}~r_j\geq 0,
\end{cases}$ 
$k,r\in\mathbb{Z}_{\geq 1}$, $i_j\in I$, $a_{j}\in \mathbb{Z}$, $r_j\in \mathbb{Z}_{\geq -r}$ for $j=1,2,\ldots,k$, and
\begin{enumerate}
\item[(1)] $i_1<i_2<\cdots<i_k$,
\item[(2)] $(a_{j}-a_{j-1})(a_{j+1}-a_{j})<0$ for $2\leq j\leq k-1$,
\item[(3)] $\lvert a_{j}-a_{j-1}\rvert=i_{j}-i_{j-1}+2$ for $2\leq j\leq k$,
\item[(4)] $r_1=0$ if $a_{1}>a_{2}$, $r_j=0$ if $2\leq j\leq k-1$, $a_{j-1}<a_j$ and $a_j>a_{j+1}$, $r_k=0$ if $a_{k-1}<a_{k}$.
\end{enumerate}	 
\end{definition}

It follows from the definition of generalized HL-modules that $r_1 = r_3 = r_5 = \cdots = 0$ if $a_1 > a_2$, otherwise $r_2 = r_4 = r_6 = \cdots = 0$.

We give some examples as follows.
\begin{example} 
Some examples of generalized HL-modules are shown in the below.
 \begin{gather}
\begin{align*}
& L( (Y_{1,-7})_{3,2}  (Y_{2,-4})_{3,0} (Y_{3,-7})_{3,1}) = L(Y_{1,-11} Y_{1,-9} Y_{1,-7}  Y_{1,-5}  Y_{1,-3}  Y_{2,-4} Y_{2,-2}  Y_{2,0}  Y_{3,-9} Y_{3,-7}  Y_{3,-5}  Y_{3,-3}), \\
& L((Y_{1,-3})_{2,0} (Y_{2,-6})_{2,-1}  (Y_{3,-3})_{2,0}  (Y_{4,-6})_{2,0}) = L(Y_{1,-3} Y_{1,-1} Y_{2,-6}  Y_{3,-3}  Y_{3,-1} Y_{4,-6}  Y_{4,-4}), \\
& L( (Y_{2,-6})_{2,-1}  (Y_{4,-2})_{2,0}  (Y_{5,-5})_{2,1} (Y_{6,-2})_{2,0})=L(Y_{2,-6} Y_{4,-2} Y_{4,0} Y_{5,-7} Y_{5,-5} Y_{5,-3} Y_{6,-2} Y_{6,0}), \\
& L( (Y_{2,-6})_{2,1} (Y_{4,-2})_{2,0} (Y_{6,-6})_{2,-1} (Y_{5,-3})_{2,0}) = L(Y_{2,-8} Y_{2,-6}  Y_{2,-4}  Y_{4,-2}  Y_{4,0}  Y_{6,-6} Y_{5,-3}  Y_{5,-1}).
\end{align*}
\end{gather}
\end{example}

\subsection{Equivalence classes of generalized HL-modules as cluster variables}\label{equivalence classes of generalized HL-modules as cluster variables}

Starting from the initial seed of $K_0(\mathscr{C}_{\ell})$ defined in Section \ref{K0Ccluster algebra}, each mutation at a cluster variable yields a new cluster variable, and the new cluster variable corresponds to a real prime simple module of $\mathscr{C}_{\ell}$ by Lemma \ref{theorem of real prime}. We can compute the highest $l$-weight monomial of the new real prime simple module by Proposition \ref{lemma:equations satisfy by q-character}. In fact, we find a mutation sequence such that the equivalence classes of generalized HL-modules are cluster variables in $K_0(\mathscr{C}_{\ell})$.

In the following, we give a mutation sequence of an arbitrary generalized HL-module 
\[
L((Y_{i_1,a_1})_{r,r_1}(Y_{i_2,a_2})_{r,r_2}\dots (Y_{i_k,a_k})_{r,r_k})
\]
in $K_0(\mathscr{C}_{\ell})$. It follows from Definition \ref{definition of GHL} that $L((Y_{i_1,a_1})_{r,0}(Y_{i_2,a_2})_{r,0}\dots (Y_{i_k,a_k})_{r,0})$ is an $\HL$-module. As shown in the proof of Theorem \ref{theorem:real and prime}, we can always construct a height function $\xi$ subject to $\xi(i_{1}-1)\neq\xi(i_{1}+1)$, $\xi(i_{k}-1)=\xi(i_{k}+1)$, and $\xi(i_{k})=\xi(i_{k}+2)$, such that $L(x[\alpha_{i_{1}, i_{k}}])$ is the $\HL$-module $L((Y_{i_1,a_1})_{r,0}(Y_{i_2,a_2})_{r,0}\dots (Y_{i_k,a_k})_{r,0})$.

Let $\mathscr{S}_m(\seed)$ be the seed defined in (\ref{the new obtained seed before HLr}). The next step is to perform the following mutations in $\mathscr{S}_m(\seed)$. If there exists a vertex $(u,v)$, with $u\in I$, $v\in [1,r-2]\cup [r+2,\ell]$, such that the arrows incident to it are shown in Figure \ref{arrows incident to (u,v)}, then we mutate it. If, after the mutation, there still exists a vertex $(u,v)$ in the current quiver such that the arrows incident to it are shown in Figure \ref{arrows incident to (u,v)}, then we continue mutating it. Repeating this process until no vertex in the first $s$ rows, where $s$ is sufficiently large, such that the arrows incident to it are shown in Figure \ref{arrows incident to (u,v)}. The new seed is denoted by $\mathscr{S}_m(\seed)'$.

\begin{figure}[H]
\centerline{
{\tiny
\xymatrix{
&(u,v-1)\ar[d]&\\
(u-1,v)  & (u,v) \ar[l] \ar[r]& (u+1,v) \\
&(u,v+1)\ar[u]&
}}
}
\caption{Arrows incident to $(u,v)$. Here we ignore vertices and arrows incident to them if the vertices in this quiver are not in $I\times [1,\ell+1]$.} \label{arrows incident to (u,v)}
\end{figure}
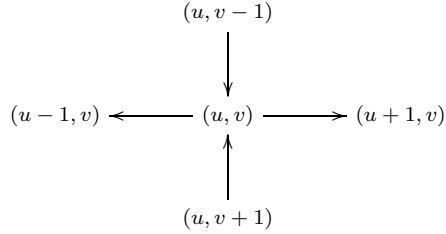

For $u\in I$, $t_u\in\mathbb{Z}$, let $S^{t_u}_{\xi}$ be an ordered sequence of vertices,  defined by
\[
S^{t_u}_{\xi}:=\begin{cases}
((u,{r+t_{u}}), (u, {r+t_{u}-1}),\dots,(u, {r})) & \text{if}~\xi(u)=\xi(u+1)-1, t_u\geq 0,\\
((u,{r+t_{u}}), (u, {r+t_{u}+1}),\dots,(u, {r})) & \text{if}~\xi(u)=\xi(u+1)-1, t_u\leq 0,\\
((u, {r}),(u, {r+1}),\dots,(u, {r+t_{u}})) & \text{if}~\xi(u)=\xi(u+1)+1, t_u\geq 0,\\
((u, {r}),(u, {r-1}),\dots, (u, {r+t_{u}})) & \text{if}~\xi(u)=\xi(u+1)+1, t_u\leq 0.
\end{cases}
\] 			
Define $(S^{t_{i_{1}}}_{\xi}, S^{t_{i_{1}+1}}_{\xi}, \dots, S^{t_{i_{k}}}_{\xi})$ to be an ordered sequence of vertices subject to the following conditions:
\begin{itemize}
\item[(1)] $t_{i_1}=r_1$, $t_{i_m}=r_{m-1}+r_{m}$ for $2\leq m\leq k$, 
\item[(2)] $t_{p}=r_{m-1}+r_{m}$ for $i_{m-1}<p<i_{m}$, where $2\leq m\leq k$.
\end{itemize}

If $a_{k-1} < a_k$, or $a_{k-1} > a_k$ and $r_k = 0$, or $n=i_k$ ($n$ is the rank of $\mathfrak{g}$), let
\[
\mathscr{S}':= (S^{t_{i_{1}}}_{\xi}, S^{t_{i_{1}+1}}_{\xi}, \dots, S^{t_{i_{k}}}_{\xi}); 
\]
if  $i_k < n$, $a_{k-1} > a_k$, and $r_k > 0$, let
\[
\mathscr{S}':= (S^{t_{i_{1}}}_{\xi}, S^{t_{i_{1}+1}}_{\xi}, \dots, S^{t_{i_{k}}}_{\xi},(i_{k}+1,r), (i_{k}+1,r+1), \dots, (i_{k}+1,r+r_k)); 
\]
and if $i_k < n$, $a_{k-1} > a_k$, and $r_k < 0$, let 
\[
\mathscr{S}':= (S^{t_{i_{1}}}_{\xi}, S^{t_{i_{1}+1}}_{\xi}, \dots, S^{t_{i_{k}}}_{\xi},(i_{k}+1,r), (i_{k}+1,r-1), \dots, (i_{k}+1,r+r_k)). 
\]

We mutate sequence $\mathscr{S}'$, meaning that we first mutate the vertices of $S^{t_{i_1}}_{\xi}$ in order, and then mutate the vertices of $S^{t_{i_{1}+1}}_{\xi}$ in order, and so on.

\begin{theorem} \label{generalized HL-modules are cluster variables}
The equivalence classes of generalized HL-modules are cluster variables in $K_0(\mathscr{C}_{\ell})$. In particular, generalized HL-modules are real and prime. 
\end{theorem}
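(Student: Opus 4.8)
The plan is to follow the same strategy that worked for $\HL$-modules in Theorem~\ref{theorem:real and prime}, but now starting from the seed $\mathscr{S}_m(\seed)$ and tracking cluster variables through the auxiliary mutations $\mathscr{S}_m(\seed)\to\mathscr{S}_m(\seed)'$ and the sequence $\mathscr{S}'$. By Lemma~\ref{theorem of real prime}, every cluster variable obtained along any mutation sequence from the initial seed of $\seed$ is the equivalence class of a real prime simple module, so once we show that $L((Y_{i_1,a_1})_{r,r_1}\cdots(Y_{i_k,a_k})_{r,r_k})$ arises in this way, realness and primeness come for free. Thus the entire content is the computation of highest $l$-weight monomials along $\mathscr{S}'$, and for this the tool is Proposition~\ref{lemma:equations satisfy by q-character}: at each mutation step the exchange relation has the form $[L(m_1)][L(m_2)]=[L(m_{a_1})]\cdots+[L(m_{b_1})]\cdots$, and the new monomial is determined by dividing the maximum of the two products by the old monomial, once one checks a non-divisibility condition $m_{\mathrm{old}}\nmid(\text{the other term})$.

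First I would record, as a preliminary lemma, the effect of the auxiliary mutations producing $\mathscr{S}_m(\seed)'$: each mutation of a vertex $(u,v)$ with the local picture of Figure~\ref{arrows incident to (u,v)} is a $T$-system relation $[L(X^{(s)}_{u,v\text{-row}})][L(\cdots)]=[L(\cdots)]+[L(\cdots)]$, and I would argue that after exhausting all such vertices in the first $s$ rows the cluster variables in rows $r-1,r,r+1$ are unchanged (they already sit in the ``frozen-like'' configuration of $Q_\xi$), while the cluster variables in the other rows are explicit Kirillov–Reshetikhin-type monomials $(Y_{u,\bullet})_{\bullet}$ — exactly the entries needed to feed the next stage. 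The key structural observation is the one already used for $\HL$-modules: with the height function $\xi$ chosen so that $L(x[\alpha_{i_1,i_k}])$ is the underlying $\HL$-module, the columns $i_1,i_1+1,\dots,i_k$ of the quiver are in a predictable sink/source pattern governed by $j_\diamond$, so that mutating a vertical run $S^{t_u}_\xi$ in column $u$ simply ``grows'' the Kirillov–Reshetikhin factor at node $u$ by moving up or down $|t_u|$ steps, multiplying the monomial by the appropriate $X$-factor and dividing by another; the bookkeeping conditions (1)--(2) defining the $t_u$'s are precisely what makes the accumulated shifts across columns $i_{m-1}<p<i_m$ consistent.

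The heart of the argument is then an induction that mutating $\mathscr{S}'$ transforms the cluster variable $[L(x[\alpha_{i_1,i_k}])]=[L((Y_{i_1,a_1})_{r,0}\cdots(Y_{i_k,a_k})_{r,0})]$ into $[L((Y_{i_1,a_1})_{r,r_1}\cdots(Y_{i_k,a_k})_{r,r_k})]$. I would induct on the columns $i_1,i_1+1,\dots,i_k$ in order (and within a column, on the position along $S^{t_u}_\xi$), at each step applying Proposition~\ref{lemma:equations satisfy by q-character} to the exchange relation: one computes both monomial products appearing on the right, checks that the old monomial does not divide the ``wrong'' one (this uses that the node being mutated contributes a $Y$-variable not present in that product, just as in Cases~1--8 of Lemma~\ref{Lem:Hight weight monomial}), and reads off the new monomial. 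The definition $(Y_{i_j,a_j})_{r,r_j}$ with its two cases $r_j\le 0$ versus $r_j\ge 0$ matches the two sign cases of $S^{t_u}_\xi$ (depending on whether $\xi(u)=\xi(u+1)\mp1$), and condition~(4) of Definition~\ref{definition of generalizedHL} — that $r_j=0$ at the ``peak'' nodes — is exactly what guarantees the local picture stays mutable and the telescoping of shifts is well-defined. The extra tail of $\mathscr{S}'$ in the cases $i_k<n$, $a_{k-1}>a_k$, $r_k\neq0$ handles the boundary node $i_k+1$, where one additional run of mutations is needed to absorb the shift $r_k$ since $i_k$ is itself an endpoint of the $\HL$-monomial.

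The main obstacle I expect is the non-divisibility verification at each mutation step in full generality: unlike the $\HL$ case, where Lemma~\ref{arrows accident with j} gave a clean closed form for the arrows incident to the mutated vertex, here the quiver $\mathscr{S}_m(\seed)'$ has a more complicated shape away from rows $r-1,r,r+1$, so one must either prove an analogue of Lemma~\ref{arrows accident with j} describing the local quiver after each prefix of $\mathscr{S}'$ — which is essentially the kind of case analysis deferred to Appendix~\ref{appendix} — or argue more cleverly using the $q$-character partial order to bypass the explicit quiver. I would organize the proof around an explicit claim: \emph{after mutating the prefix $(S^{t_{i_1}}_\xi,\dots,S^{t_{i_{m-1}}}_\xi)$ together with column $i_m$'s run up to level $v$, the cluster variable indexed by $(i_m,v)$ has highest $l$-weight monomial equal to an explicitly given product of $X$-factors}, and prove it by induction; the divisibility checks then reduce, as in Lemma~\ref{Lem:Hight weight monomial}, to observing that a single $Y_{i_m,\bullet}$ appears on one side but not the other.
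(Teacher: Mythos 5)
Your plan follows essentially the same route as the paper: the paper likewise passes to the seed $\mathscr{S}_m(\seed)'$, mutates the sequence $\mathscr{S}'$, and proves by induction an explicit formula for the highest $l$-weight monomial of each cluster variable $[L(x_{[i_1,j],s})]$ (Proposition \ref{pro:cluster variables}), using Proposition \ref{lemma:equations satisfy by q-character} together with a non-divisibility check at every step, so that reality and primeness follow from Lemma \ref{theorem of real prime}. The ``explicit claim'' you propose to induct on is exactly the paper's Proposition \ref{pro:cluster variables}, and the local-quiver bookkeeping you flag as the main obstacle is precisely what the paper handles through its ten-case description of the arrows incident to $(j,r+s)$ in $Q_\xi^{s}[i_1,j]$, with the representative case proved in Appendix \ref{appendix}.
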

 
The proof of the theorem above will be given in Section \ref{Proof of Theorem on generalized HL-modules}. 

We have the following corollary.
\begin{corollary}
Both Conjecture \ref{conjecture of bijection} and Conjecture \ref{conjecture of geometric q-character formula} are true for generalized HL-modules.
\end{corollary}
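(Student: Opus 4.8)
The plan is to cash in Theorem~\ref{generalized HL-modules are cluster variables} against the general results recalled in Section~\ref{K0Ccluster algebra}, so that no new machinery is needed. First I would fix a generalized HL-module $L(m)$ and choose $\ell\in\mathbb{Z}_{\geq0}$ large enough that $L(m)\in\mathscr{C}_{\ell}$; this is possible because $m$ involves only finitely many variables $Y_{i,s}$, which, after a harmless global shift of the spectral parameter, can be taken among the $Y_{i,\xi_i-2k}$ with $k\in[0,\ell]$. Theorem~\ref{generalized HL-modules are cluster variables} then says that $[L(m)]$ is a cluster variable of $\seed$.

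For Conjecture~\ref{conjecture of bijection}, I would argue as follows. By Lemma~\ref{theorem of real prime} the module attached to any cluster variable (respectively cluster monomial) of $\seed$ is a real prime simple (respectively real simple) module. Combining this with the previous step, every generalized HL-module is the real prime simple module attached to a cluster variable, namely to its own class, and any product of classes of generalized HL-modules lying in a common cluster is a cluster monomial, hence the class of a real simple module. Thus the matching predicted by Conjecture~\ref{conjecture of bijection} is verified on the sub-collection of generalized HL-modules, which is the precise sense in which the conjecture holds for them.

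For Conjecture~\ref{conjecture of geometric q-character formula}, I would invoke the reduction recalled just after its statement: following \cite[Section~5.5.4]{HL21}, once the class of a real simple module $L(m)$ is known to be a cluster monomial one has $\chi_q^-(L(m))=m\,F_{K(m)}$. Since $[L(m)]$ is a cluster variable, a fortiori a cluster monomial, of $\seed$, the formula applies to every generalized HL-module. The only point demanding any care in writing this up is the bookkeeping of the choice of $\ell$ and the spectral-parameter normalization, so that the ambient category is genuinely $\mathscr{C}_{\ell}$ and both Lemma~\ref{theorem of real prime} and the argument of \cite[Section~5.5.4]{HL21} apply verbatim; the substantive input — the explicit mutation sequence $\mathscr{S}'$ realizing $[L(m)]$ as a cluster variable — is exactly Theorem~\ref{generalized HL-modules are cluster variables}, whose proof is deferred to Section~\ref{Proof of Theorem on generalized HL-modules}. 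So there is really no separate obstacle here beyond that theorem itself.
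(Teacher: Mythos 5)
Your proposal is correct and follows essentially the same route as the paper: both deduce Conjecture \ref{conjecture of bijection} for generalized HL-modules directly from Theorem \ref{generalized HL-modules are cluster variables} (together with Lemma \ref{theorem of real prime}), and both obtain Conjecture \ref{conjecture of geometric q-character formula} by combining that theorem with the fact, recalled from \cite[Section 5.5.4]{HL21}, that the geometric $q$-character formula holds for real simple modules whose classes are cluster monomials. The extra remarks on choosing $\ell$ and normalizing the spectral parameter are harmless bookkeeping already implicit in the paper's setup.
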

\begin{proof}
It follows from Theorem \ref{generalized HL-modules are cluster variables} that Conjecture \ref{conjecture of bijection} is true for generalized HL-modules. Conjecture \ref{conjecture of geometric q-character formula} is true for real simple modules corresponding to cluster monomials, see \cite[Section 5.5.4]{HL21}. Therefore, by Theorem \ref{generalized HL-modules are cluster variables} again, Conjecture \ref{conjecture of geometric q-character formula} is true for generalized HL-modules.
\end{proof}

\subsection{Proof of Theorem \ref{generalized HL-modules are cluster variables}} \label{Proof of Theorem on generalized HL-modules}
In what follows, we prove that the required generalized HL-module is obtained by performing the mutation sequence $\mathscr{S}'$ in the seed $\mathscr{S}_m(\seed)'$.

We assume that $Q'_\xi$ is the quiver in the seed $\mathscr{S}_m(\seed)'$. From the definition of sequence $\mathscr{S}'$ in Section \ref{equivalence classes of generalized HL-modules as cluster variables}, it follows that each vertex in $\mathscr{S}'$ appears only once. 
Let $(j,r+s)$ be a vertex in $\mathscr{S}'$, where $i_1 \leq j\leq i_k+1$, $t_j \geq 0$ (respectively, $t_j \leq 0$), $s\in [0,t_j]$ (respectively, $s\in[t_j,0]$). Apply the mutation sequence $\mathscr{S}'$ to $Q'_\xi$, we denote by $Q_\xi^{s}[i_1,j]$ the quiver, which is obtained by mutating the first vertex until the vertex before $(j, r+s)$ in order.

Let		
\[
p^x(t)=\begin{cases}
1  \quad &\text{if}~ t \geq x,\\
0  \quad &\text{if}~ t < x, 
\end{cases}
\qquad q^x(t)=\begin{cases}
1  \quad &\text{if}~ t \leq x,\\
0  \quad &\text{if}~ t > x.
\end{cases}
\]

The arrows incident to the vertex $(j, r-s)$ in $Q_\xi^{-s}[i_1,j]$ can be obtained from the arrows incident to $(j, r+s)$ in $Q^{s}_{\xi}[i_1,j]$ by replacing each vertex $(j, r+a)$ with $(j, r-a)$, the function $p^{x}(t)$ (respectively, $q^{x}(t)$) with $q^{x}(t)$ (respectively, $p^{x}(t)$), and deleting the vertices that are not in $Q'_\xi$, as well as the arrows incident to them. For our purpose, it is enough to give all arrows incident to $(j, r+s)$ in $Q^{s}_{\xi}[i_1,j]$.

With the help of Keller's quiver mutation applet \cite{Kel}, all arrows incident to $(j, r+s)$ in $Q^{s}_{\xi}[i_1,j]$ are given by the following 10 cases. By the same argument as in Lemma \ref{arrows accident with j}, we prove Case 10 in Appendix \ref{appendix}, and the other cases can be similarly proved by induction.

{\bf Case 1.} In the case where $\xi(i_1)=\xi(i_1+1)-1$, $t_{i_1}=0$, arrows incident to $(i_1,r)$ in $Q_\xi^{0}[i_1,i_1]$ are shown in Figure \ref{arrows incident with 0}, and reverse the orientations of arrows if $\xi(i_1)=\xi(i_1+1)+1$, $t_{i_1}=0$.
\begin{figure}
\centering
\resizebox{.55\textwidth}{.8\height}{
\xymatrix{
 &(i_1, r-1) \ar[d] &(i_1+1, r-1)    \\
 (i_1-1, r)  & (i_1, r) \ar[l]   \ar@/^/[r]^{d_{i_1}} \ar[ru]^{1-d_{i_1}}
 \ar[rd]_{1-d_{i_1}}
&(i_1+1, r) \ar@/^/[l]^{1-d_{i_1}} \\
 &(i_1, r+1)   \ar[u]& (i_1+1, r+1)
}
}
\caption{Arrows incident to $(i_1,r)$ in $Q_\xi^{0}[i_1,i_1]$ (see  Case 1).} \label{arrows incident with 0}
\end{figure}
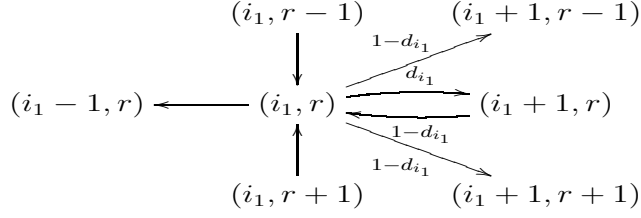

{\bf Case 2.} In the case where $\xi(i_1)=\xi(i_1+1)-1$, $t_{i_1} \geq 1$, arrows incident to $(i_1,r+t_{i_1})$ in $Q_\xi^{t_{i_{1}}}[i_1,i_1]$ are shown in Figure \ref{arrows incident with 1}.
\begin{figure}
\centering
\resizebox{.55\textwidth}{.8\height}{
\xymatrix{
(i_1-1, r+t_{i_1}-1) \ar[rd] &(i_1, r+t_{i_1}-1)  &   \\
(i_1-1, r+t_{i_1})  & (i_1, r+t_{i_1}) \ar[l]    \ar[rd]\ar[u] &(i_1+1, r+t_{i_1}) \ar[l] \\
 &(i_1, r+t_{i_1}+1)   \ar[u]& (i_1+1, r+t_{i_1}+1)
}
}
\caption{Arrows incident to $(i_1,r+t_{i_1})$ in $Q_\xi^{t_{i_{1}}}[i_1,i_1]$ (see  Case 2).} \label{arrows incident with 1}
\end{figure}
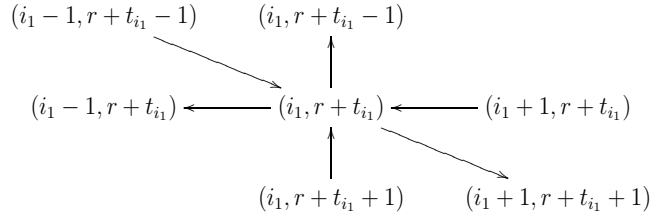

{\bf Case 3.}  In the case where $t_{i_1}>s \geq 1$, $\xi(i_1)=\xi(i_1+1)-1$, arrows incident to $(i_1,r)$ in $Q_\xi^{0}[i_1,i_1]$ are shown in the left picture of Figure \ref{arrows incident with 2}, and arrows incident to $(i_1,r+s)$ in $Q_{\xi}^{s}[i_1,i_1]$ are shown in the right picture of Figure \ref{arrows incident with 2}.
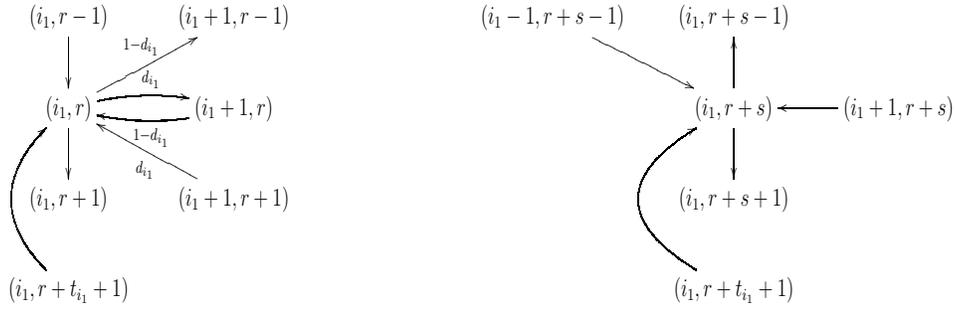
\begin{figure}[H]
\centering
\resizebox{.8\textwidth}{.8\height}{
\begin{minipage}[c]{0.5\textwidth}
\xymatrix{
(i_1, r-1) \ar[d]  &(i_1+1, r-1)  \\
 (i_1, r) \ar[ur]^{1-d_{i_1}}  \ar@/^/[r]^{d_{i_1}} \ar[d]&(i_1+1, r) \ar@/^/[l]^{1-d_{i_1}} \\
(i_1, r+1)   & (i_1+1, r+1) \ar[lu]^{d_{i_1}}\\
(i_1, r+t_{i_1}+1) \ar@/^3pc/[uu]  &
 }
\end{minipage}
\qquad \qquad
\begin{minipage}[c]{0.5\textwidth}
\xymatrix{
& &(i_1-1, r+s-1)  \ar[rd]  &(i_1, r+s-1)   & \\
& &  &(i_1, r+s)   \ar[u] \ar[d]& (i_1+1, r+s)  \ar[l]\\
&   & & (i_1, r+s+1)   & \\
&   & & (i_1, r+t_{i_1}+1)  \ar@/^5pc/[uu]  & } 
\end{minipage}}
\caption{Arrows incident to $(i_1,r)$ in $Q_\xi^{0}[i_1,i_1]$ (left), and arrows incident to $(i_1,r+s)$ in $Q_{\xi}^{s}[i_1,i_1]$ (right) (see Case 3).} \label{arrows incident with 2}
\end{figure}

{\bf Case 4.} In the case where $j> i_1$, $t_j=0$ and $\xi(j)=\xi(j+1)-1$, arrows incident to $(j, r)$ in  $Q_\xi^{0}[i_1, j]$ are shown in Figure \ref{arrows incident with 3}, where $a_1=(1-\delta_{{i_1}_{\bullet},j_{\bullet}}-\delta_{{i_1}, j_{\bullet}})d_{j_{\bullet}-1}p^{0}(r_{j_\bullet})+\delta_{{i_1}, j_{\bullet}}$, $a_2=(1-\delta_{{i_1}_{\bullet},j_{\bullet}}-\delta_{{i_1}, j_{\bullet}})d_{j_{\bullet}-1} q^{0}(r_{j_\bullet})+\delta_{{i_1},j_{\bullet}}$.

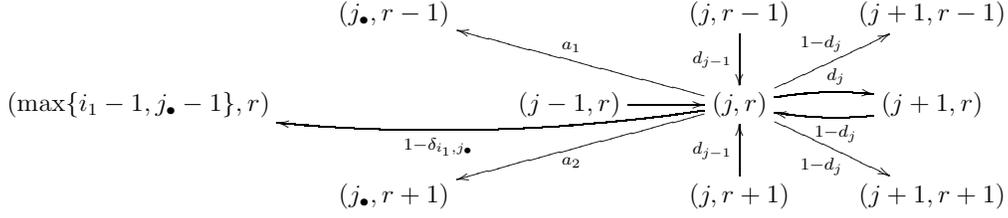
\begin{figure} [H]
\centering
\resizebox{.85\textwidth}{.8\height}{
\xymatrix{
& (j_{\bullet}, r-1) &  &(j, r-1) \ar[d]_{d_{j-1}}  &(j+1, r-1)  \\
(\max\{i_1-1,j_{\bullet}-1\}, r) & & (j-1, r) \ar[r]& (j, r) \ar[ull]_{a_{1}} \ar[dll]^{a_{2}} \ar@/^1pc/[lll]^{1-\delta_{i_1, j_{\bullet}}} \ar[ur]^{1-d_j} \ar[dr]_{1-d_j} \ar@/^/[r]^{d_{j}} & (j+1, r) \ar@/^/[l]^{1-d_{j}} \\
&(j_{\bullet}, r+1)  &  &(j, r+1) \ar[u]^{d_{j-1}}  & (j+1, r+1)
}}
\caption{Arrows incident to $(j, r)$ in  $Q_\xi^{0}[i_1, j]$ (see Case 4).}\label{arrows incident with 3}
\end{figure}

{\bf Case 5.} In the case where $j > i_1$, $t_j \geq 1$ and $\xi(j)=\xi(j+1)-1$, arrows incident to $(j, r)$ in  $Q_\xi^{0}[i_1,j]$ are shown in Figure \ref{arrows incident with 4}, where $a=(1-\delta_{{i_1}_{\bullet},j_{\bullet}}-\delta_{{i_1},j_{\bullet}})d_{j_{\bullet}-1}p^{0}(r_{j_\bullet})+\delta_{{i_1},j_{\bullet}}$, $b=(1-\delta_{{i_1}_\bullet, j_{\bullet}}-\delta_{{i_1},j_{\bullet}})q^{0}(r_{j_\bullet})$.
\begin{figure}[H]
\centering
\resizebox{.85\textwidth}{.8\height}{
\xymatrix{
&(\max\{i_1, j_{\bullet}\}, r-1) &  &(j, r-1) \ar[d]_{d_{j-1}}  &(j+1, r-1)  \\
(j_{\bullet}-1, r) & & (j-1, r)  \ar[r]& (j, r)  \ar[ull]_{a}  \ar@/^1pc/[lll]^{b} \ar[ur]^{1-d_j} \ar@/^/[r]^{d_{j}} \ar[d] & (j+1, r)  \ar@/^/[l]^{1-d_{j}} \\
&   &  & (j, r+1) & (j+1, r+1) \ar[ul]^{d_j}  \\
&  &  &(j, t_j+r+1) \ar@/^3pc/[uu]^{d_{j-1}} & 
}}
\caption{Arrows incident to $(j, r)$ in  $Q_\xi^{0}[i_1,j]$ (see  Case 5).}\label{arrows incident with 4}
\end{figure}
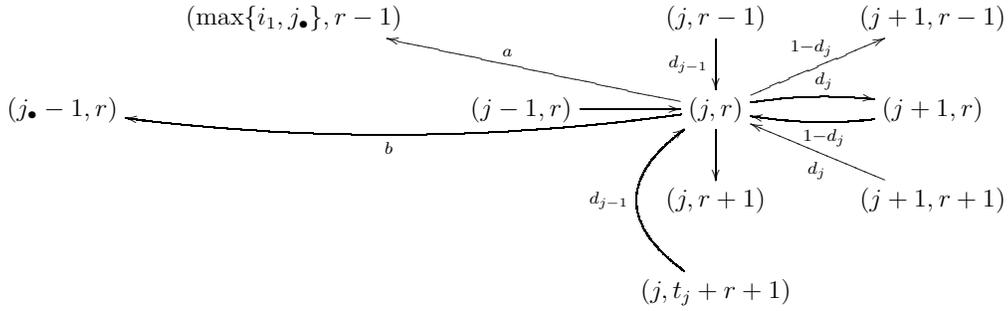

{\bf Case 6.} In the case where $j>i_1$, $s \geq 1$ and $\xi(j)=\xi(j+1)-1$, $t_j>s$, arrows incident to $(j, r+s)$ in  $Q_\xi^{s}[i_1,j]$ are shown in Figure \ref{arrows incident with 6}, where $a=d_{j-1} q^{t_{j-1}}(s) (1-\delta_{1,s})$, $b=d_{j-1} q^{t_{j-1}}(s)\delta_{1,s}$, $c=d_{j-1}p^{t_{j-1}}(s-1)+(1-d_{j-1})$.
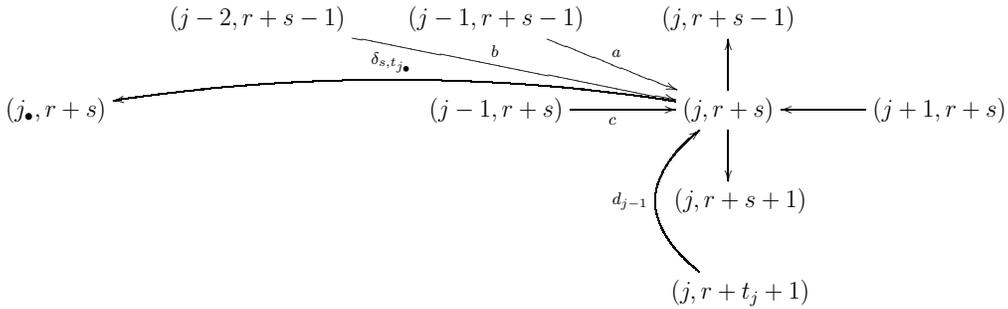
\begin{figure} [H]
\centering
\resizebox{.85\textwidth}{.8\height}{
\xymatrix{
&(j-2, r+s-1) \ar[rrd]^{b}& (j-1, r+s-1) \ar[rd]^{a} &(j, r+s-1)   &\\
(j_\bullet, r+s) & & (j-1, r+s) \ar[r]_{c}& (j, r+s) \ar[u]  \ar[d]  \ar@/^-1.2pc/[lll]_{\delta_{s, t_{j_\bullet}}}& (j+1, r+s) \ar[l]\\
&&  &\quad (j, r+s+1)   & \\
&&  &\quad (j, r+t_j+1) \ar@/^3pc/[uu]^{d_{j-1}}&  
}}
\caption{Arrows incident to $(j, r+s)$ in  $Q_\xi^{s}[i_1,j]$ (see Case 6).}\label{arrows incident with 6}					
\end{figure}

{\bf Case 7.} In the case where $j> i_1$, $s \geq 1$ and $\xi(j)=\xi(j+1)-1$, $t_j=s$, arrows incident to $(j, r+s)$ in  $Q_\xi^{s}[i_1,j]$ are shown in Figure \ref{arrows incident with 5}, where $a=d_{j-1}q^{t_{j-1}}(s)(1-\delta_{1,s})$, $b=d_{j-1} q^{t_{j-1}}(s)\delta_{1,s}$, $c=d_{j-1}p^{t_{j-1}}(s-1)+(1-d_{j-1})$, $d=(1-\delta_{i_1,j_\bullet}-\delta_{{i_1}_\bullet, j_\bullet})p^{t_{j_\bullet}}(s)  d_{j_\bullet-1}+\delta_{i_1,j_\bullet}$.
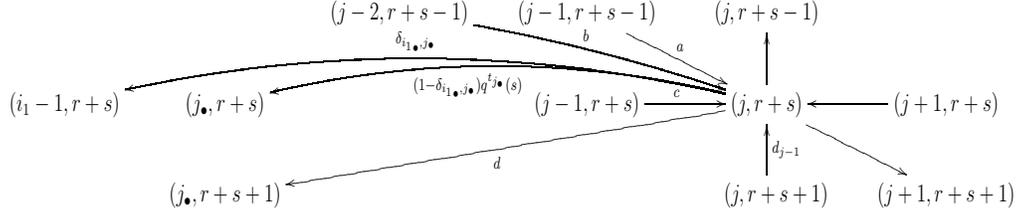
\begin{figure}[H] 
\resizebox{.85\textwidth}{.8\height}{
\begin{minipage}[c]{0.9\textwidth}
\xymatrix{
&&(j-2, r+s-1) \ar@/^0.4pc/[rrd]^{b} & (j-1, r+s-1) \ar[rd]^{a} &(j, r+s-1) &\\
(i_1-1, r+s) &(j_\bullet, r+s) & & (j-1, r+s) \ar[r]^{c}  & (j, r+s)  \ar[u]  \ar[dr] \ar@/^-1.5pc/[lll]^{(1-\delta_{{i_1}_\bullet, j_\bullet})q^{t_{j_\bullet}}(s)\quad\quad\quad} \ar@/^-1.8pc/[llll]_{\delta_{{i_1}_\bullet, j_\bullet}} \ar[dlll]^{d}& (j+1, r+s) \ar[l]\\
&(j_\bullet, r+s+1) 	&& &\quad (j, r+s+1)  \ar[u]_{d_{j-1}}  & (j+1, r+s+1)  
}
\end{minipage}}
\caption{Arrows incident to $(j, r+s)$ in  $Q_\xi^{s}[i_1,j]$ (see Case 7).}\label{arrows incident with 5}
\end{figure}

{\bf Case 8.} In the case where $j>i_1$, $t_j\geq 0$ and $\xi(j)=\xi(j+1)+1$, arrows incident to $(j, r)$ in  $Q_\xi^{0}[i_1,j]$ are shown in Figure \ref{arrows incident with 7}, where $a=d_{j-1}p^{0}(t_{j-1})$, $b=d_{j-1}q^{0}(t_{j-1})$, $c=(\delta_{i_1, j_\bullet}+(1-\delta_{{i_1}_\bullet, j_\bullet}-\delta_{i_1,j_\bullet})d_{j_\bullet-1})q^{-1}(t_{j_\bullet})$, $d=(\delta_{i_1,j_\bullet}+(1-\delta_{{i_1}_\bullet, j_\bullet}-\delta_{i_1,j_\bullet})d_{j_\bullet-1})p^0(t_{j_\bullet})$, $e=(\delta_{i_1,j_\bullet}+(1-\delta_{{i_1}_\bullet, j_\bullet}-\delta_{i_1,j_\bullet})d_{j_\bullet-1})q^0( t_{j_\bullet})$, $f=(\delta_{i_1,j_\bullet}+(1-\delta_{{i_1}_\bullet, j_\bullet}-\delta_{i_1,j_\bullet})d_{j_\bullet-1})p^1( t_{j_\bullet}) $.
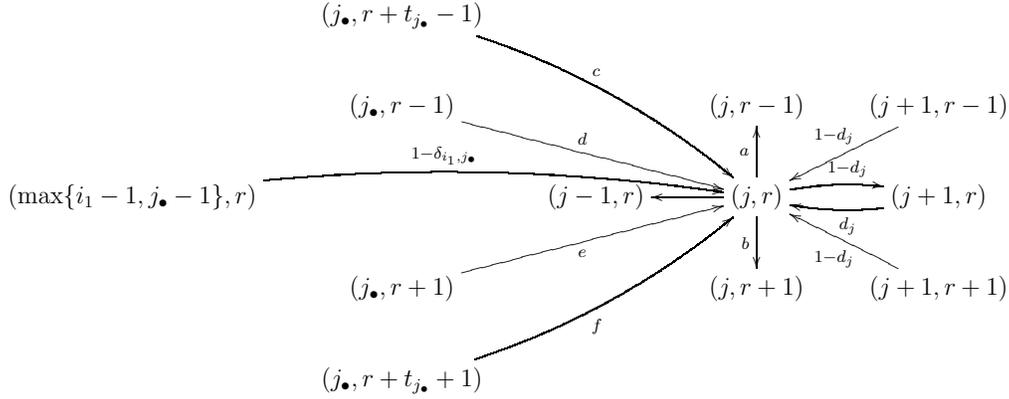
\begin{figure}[H]
\resizebox{.85\textwidth}{.8\height}{
\xymatrix{
&(j_\bullet, r+t_{j_\bullet}-1) \ar@/^1pc/[ddrr]^{c}&   &  \\
&(j_\bullet, r-1) \ar[drr]^{d}  &  &(j, r-1) &(j+1, r-1) \ar[dl]_{1-d_{j}} \\
(\max\{i_1 -1,j_{\bullet}-1\}, r) \ar@/^1pc/[rrr]^{1-\delta_{i_1, j_\bullet}}& & (j-1, r) & (j, r) \ar[u]^{a} \ar[l]\ar@/^/[r]^{1-d_{j}}\ar[d]_{b} & (j+1, r)\ar@/^/[l]^{d_{j}} \\ 
&(j_\bullet, r+1)\ar[urr]_{e}  &  & (j, r+1)  & (j+1, r+1)\ar[ul]^{1-d_{j}}\\
&(j_\bullet, r+t_{j_\bullet}+1) \ar@/_1pc/[uurr]_{f} &  & & 		
}}
\caption{Arrows incident to $(j, r)$ in  $Q_\xi^{0}[i_1,j]$ (see Case 8).}\label{arrows incident with 7}					
\end{figure}

{\bf Case 9.} In the case where $j>i_1$, $t_j \geq1$ and $\xi(j)=\xi(j+1)+1$, arrows incident to $(j, r+1)$ in  $Q_\xi^{1}[i_1,j]$ are shown in Figure \ref{arrows incident with 8}, where $X=\max\{x \mid i_1\leq x<j, \xi(x)=\xi(x+1)+1\}$, $Y=X-1$,  $a=(1-\delta_{i_1,X})d_{X-1}q^0(t_Y),$ $b=d_{X-1}d_{j-1}(\delta_{1,t_X}+p^{2}(t_X)\delta_{1,t_j}) +d_{j-1}(1-\delta_{i_1,X})(1-d_{X-1})\delta_{1,t_j}$, $c=(1-d_{j-1})p^{1}(t_{j-1})$, $d=d_{j-1}p^2(t_{j-1})+(1-d_{j-1})((1-d_{i_1})\delta_{j-1, i_1}+(1-\delta_{j-1,i_1})d_{j-2}q^{1}(t_{j-1}))$, $e=(1-\delta_{X,i_1})d_{X-1}d_{j-1}\delta_{1,t_j}q^{1}(t_X)+\delta_{X,i_1}d_{j-1}\delta_{1,t_j}$,  $f=(1-d_{j-1})+d_{j-1}p^{2}(t_{j-1})$, $g=d_{j-1}\delta_{{i_1}_\bullet,(j-1)_\bullet}\delta_{t_j,1}$.
\begin{figure}[H] 
\resizebox{.85\textwidth}{.8\height}{
\begin{minipage}[c]{0.8\textwidth}
\xymatrix{
& (X, r-1) &  & &   &  \\
(Y,r)&  & (j-1, r)\ar[drr] & &  &   \\
(i_1 -1, r+1)& (X, r+1)  & (j-1, r+1) &&  (j, r+1)  \ar@/^-2pc/[llluu]_{\qquad(1-\delta_{i_1,X}) d_{X-1} p^{0}(t_{X})+\delta_{i_1,X}}  \ar@/^-1pc/[llllu]^{a }  \ar@/^-1pc/[lll]^{\quad  b }   \ar@/^-1.3pc/[llll]_{g\quad\quad } \ar[ll]^{c}\ar[r] \ar[lld]^{d} \ar[dlll]^{e} & (j+1, r+1)  \\
& (X, r+2) &  (j-1, r+2) & & (j, r+2) \ar[u]^{f}&  
}
\end{minipage}}
\caption{Arrows incident to $(j, r+1)$ in  $Q_\xi^{1}[i_1,j]$ (see Case 9).}\label{arrows incident with 8}					
\end{figure}
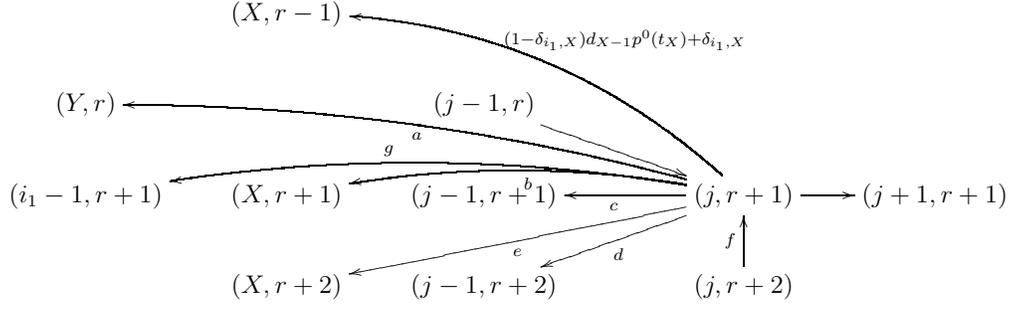

{\bf Case 10.} In the case where $j > i_1$, $t_j \geq 2$, $t_j\geq s \geq2$ and $\xi(j)=\xi(j+1)+1$, arrows incident to $(j, r+s)$ in  $Q_\xi^{s}[i_1, j]$ are shown in Figure \ref{arrows incident with 9}, where $X=\max\{x \mid i_1 \leq  x<j, \xi(x)=\xi(x+1)+1\}$, $Y=X-1$,  $a=(1-d_{j-1})d_{j-2}p^{1}(t_{j-1})p^{t_{j-1}}(s-1)+ d_{j-1}d_{X-1}p^{\min\{t_X, t_{j-1}\}}(s)p^{1}(t_X)$, $b=(1-d_{j-1})q^{t_{j-1}}(s)$,  $c=(1-\delta_{X,i_1})d_{X-1}d_{j-1}\delta_{s, t_{j}}p^{t_X}(t_{j})+\delta_{X,i_1}d_{j-1}\delta_{s, t_{j}},$ $d=d_{j-1}(1-\delta_{s, t_{j}})+(1-d_{j-1})((1-d_{i_1})\delta_{j-1,i_1}+(1-\delta_{j-1,i_1})d_{j-2}p^{t_{j-1}}(s))$, $e=d_{j-1}(1-\delta_{i_1,X})(1-d_{X-1})\delta_{s, t_j}$, $f=(1-d_{j-1})+d_{j-1}p^{s+1}(t_{j-1})$.
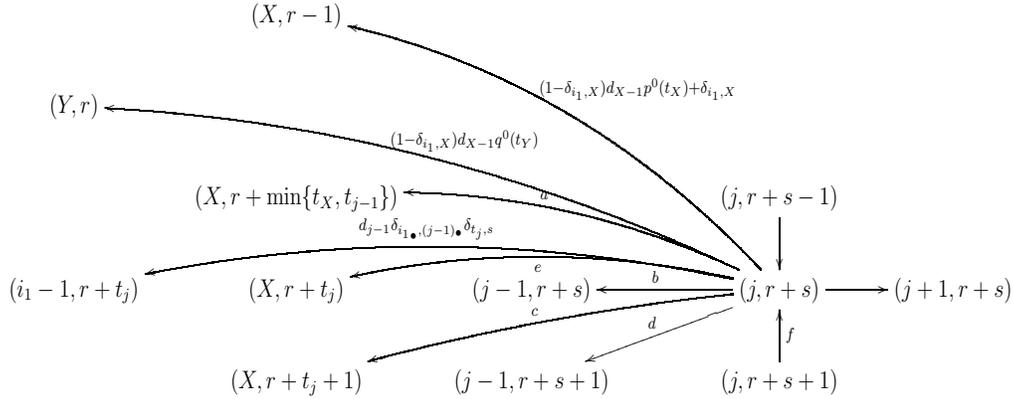
\begin{figure}[H] 
\resizebox{.85\textwidth}{.8\height}{
\xymatrix{
& (X, r-1) & &  &  &  \\
(Y, r)	&  &   &  & &   \\
&  (X, r+ \min\{t_X, t_{j-1}\}) & &  & (j, r+s-1)\ar[d] &  \\
(i_1-1, r+t_j)&  (X, r+t_j) & (j-1, r+s) & & (j, r+s) \ar@/^-1.3pc/[lll]^{e} \ar@/^-1.7pc/[llll]_{ d_{j-1}\delta_{{i_1}_\bullet,(j-1)_\bullet}\delta_{t_j,s}}  \ar@/^-1.5pc/[ulll]_{a}  \ar@/^-2pc/[llluuu]_{\qquad(1-\delta_{i_1,X}) d_{X-1} p^{0}(t_{X})+\delta_{i_1,X}}  \ar@/^-1.6pc/[lllluu]_{\quad (1-\delta_{i_1,X})d_{X-1}q^0(t_Y) }  \ar[ll]_{b}\ar[r] \ar[lld]_{d}  \ar@/^-0.5pc/[llld]_{\quad }_{c} & (j+1, r+s)  \\
&(X, r+t_{j}+1) &  (j-1, r+s+1) & & (j, r+s+1) \ar[u] _{f}&  
}}
\caption{Arrows incident to $(j, r+s)$ in  $Q_\xi^{s}[i_1, j]$ (see Case 10).} \label{arrows incident with 9}					
\end{figure}

Let $L((Y_{i_1,a_1})_{r, r_1}(Y_{i_2,a_2})_{r, r_2}\dots (Y_{i_k,a_k})_{r, r_k})$ be a generalized HL-module. For $1\leq t \leq k$, let
\[
\widetilde{\omega}(i_1, i_t)=(Y_{i_1,a_1})_{r, r_1}(Y_{i_2,a_2})_{r, r_2} \cdots (Y_{i_t,a_t})_{r, r_t}.
\]

We need the following notation:
\begin{align*}
& j_\bullet+1= i_1~\text{ if} ~j_\bullet={i_1}_\bullet,\\
& \widehat{j}=\begin{cases}
i_1 & \text{if $j_\bullet=j-1=i_1$, or $j>i_1$ and $j_\bullet={i_1}_\bullet$}, \\
\max\{x \mid x<j, \xi(x-1)=\xi(x+1)\} & \text{otherwise},
\end{cases} \\
& \widetilde{\omega}(i_1, j)=1~\text{if}~ i_1>j.
\end{align*}

Recall that the initial seed of $K_0(\mathscr{C}_{\ell})$ is given in Section \ref{K0Ccluster algebra}. There is a mutation sequence bringing $K_0(\mathscr{C}_{\ell})$ to the seed $\mathscr{S}_m(\seed)'$, see Section \ref{equivalence classes of generalized HL-modules as cluster variables}.  In addition, all exchange relations are from $T$-system equations in this process of mutations. We denote by $[L(x_{j,s})]$ the cluster variable corresponding to the vertex $(j,r+s)$ in the seed $\mathscr{S}_m(\seed)'$. By Equation (\ref{equ:T-system equation}), we have 
\[
[L(x_{j, s})]=\begin{cases}[L((Y_{j,\xi(j+1)})_{r})]  \qquad &\text{if}~s=0,\\
[L((Y_{j,\xi(j)+1})_{r, s})]  \qquad~&\text{if}~s\neq 0.
\end{cases}
\]

We denote by $[L(x_{[i_1, j], s})]$ the cluster variable corresponding to the vertex $(j, r+s)$ in the seed obtained by applying the mutation sequence $\mathscr{S}'$ to the seed $\mathscr{S}_m(\seed)'$. For simplicity, we write $[x_{j, s}]$ and $[x_{[i_1, j],s}]$ for $[L(x_{j, s})]$ and $[L(x_{[i_1,j],s})]$ respectively. By Lemma \ref{theorem of real prime}, $L(x_{[i_1, j], s})$ is a simple module.

\begin{proposition}\label{pro:cluster variables}
The simple module $L(x_{[i_1,j],s})$ has the highest $l$-weight monomial as follows:

\begin{itemize}
\item[(1)] If $i_1\leq j\leq i_k$, $\xi(j)=\xi(j+1)-1$, $s=0$, we have
\[
x_{[i_1, j], s}=\begin{cases}
 \widetilde{\omega}(i_1, j_\bullet+1) ~&\text{if}~t_j=0, d_j=1,  \\
  \widetilde{\omega}(i_1, j_\bullet+1) (Y_{j+1,\xi(j+1)+1})_{r} ~&\text{if}~ d_j=0,  \\
 \widetilde{\omega}(i_1, j_{\bullet}+1) (Y_{j+1,\xi(j+1)+1})_{r,1} ~&\text{if}~t_j>0, d_j=1, \\	
\widetilde{\omega}(i_1, j_{\bullet}+1)(Y_{j+1,\xi(j+1)+1})_{r,-1} ~&\text{if}~t_j<0, d_j=1.
\end{cases}
\]

\item[(2)] If $i_1\leq j\leq i_k$, $\xi(j)=\xi(j+1)-1$, $t_j\geq s>0$, we have
\begin{gather}
\begin{align*}
x_{[i_1, j],s}=\begin{cases}
(Y_{i_1-1,\xi(i-1)+1})_{r, s-1}   (Y_{j_\bullet+1,\xi(j_\bullet+2)-2(t_{(j_\bullet+1)}+1)})_{t_{(j_\bullet+1)}-s+1}   (Y_{j+1,\xi(j+1)+1})_{r, s} ~&\text{if}~ {i_1}_\bullet=j_\bullet,  \\	
(Y_{i_1, \xi(i_1)+1})_{r, s}   (Y_{j_\bullet+1,\xi(j_\bullet+2)-2(t_{(j_\bullet+1)}+1)})_{t_{(j_\bullet+1)}-s+1}   (Y_{j+1,\xi(j+1)+1})_{r, s}  ~&\text{if}~ i_1 =j_\bullet,   \\	
 \widetilde{\omega}(i_1, \widehat{j_\bullet}) (Y_{j_\bullet,\xi(j_\bullet)-1})_{r, s-1}   (Y_{j_\bullet+1,\xi(j_\bullet+2)-2(t_{(j_\bullet+1)}+1)})_{t_{(j_\bullet+1)}-s+1}   (Y_{j+1,\xi(j+1)+1})_{r, s}  ~&\text{if}~ i_1 <j_\bullet,  d_{j_\bullet-1}=0,\\
(\widetilde{\omega}(i_1, \widehat{j_\bullet}))^{q^{t_{j_\bullet}}(s)}(Y_{j_\bullet,\xi(j_\bullet)+1})_{r, s}   (Y_{j_\bullet+1,\xi(j_\bullet+2)-2(t_{(j_\bullet+1)}+1)})_{t_{(j_\bullet+1)}-s+1}   (Y_{j+1,\xi(j+1)+1})_{r, s} ~&\text{if}~ i_1 <j_\bullet, d_{j_\bullet-1}=1.
\end{cases}
\end{align*}
\end{gather}

\item[(3)] If $i_1\leq j\leq i_k$, $\xi(j)=\xi(j+1)-1$, $ t_j\leq s<0$,  we have
\begin{gather}
\begin{align*}
x_{[i_1, j],s}=\begin{cases}
(Y_{i_1-1,\xi(i_1 -1)+1})_{r, t_{j}} (Y_{j,\xi(j+1)+2(r+t_j)})_{s-t_j+1} (Y_{j+1,\xi(j+1)+1})_{r, t_{j}-1}  ~&\text{if}~ {i_1}_\bullet=j_\bullet,\\
(Y_{j_\bullet,\xi(j_{\bullet})+1})_{r, t_{j}-1} (Y_{j,\xi(j+1)+2(r+t_j)})_{s-t_j+1} (Y_{j+1,\xi(j+1)+1})_{r, t_{j}-1} ~&\text{if}~ i_1=j_\bullet,\\
 \widetilde{\omega}(i_1, \widehat{j_\bullet})  (Y_{j_\bullet,\xi(j_{\bullet})-1})_{r, t_{j}} (Y_{j,\xi(j+1)+2(r+t_j)})_{s-t_j+1} (Y_{j+1,\xi(j+1)+1})_{r, t_{j}-1} ~&\text{if}~ i_1 <j_\bullet, d_{j_\bullet-1}=0,\\
( \widetilde{\omega}(i_1, \widehat{j_\bullet}))^{p^{t_{j_\bullet}}(s)} (Y_{j_\bullet,\xi(j_{\bullet})+1})_{r, t_{j}-1} (Y_{j,\xi(j+1)+2(r+t_j)})_{s-t_j+1} (Y_{j+1,\xi(j+1)+1})_{r, t_{j}-1} ~&\text{if}~  i_1 <j_\bullet, d_{j_\bullet-1}=1.
\end{cases}
\end{align*}
\end{gather}					 

\item[(4)] If $i_1\leq j\leq i_k+1$, $\xi(j)=\xi(j+1)+1$, we have 
\[
x_{[i_1, j],s}=\begin{cases}
\widetilde{\omega}(i_1, j_\bullet+1)~&\text{if}~s=0, d_j=1,\\
\widetilde{\omega}(i_1, j_\bullet+1) (Y_{j+1, \xi(j+1)-1})_r~&\text{if}~s=0, d_j=0, \\
 \widetilde{\omega}(i_1, \widehat{j})(Y_{j,\xi(j)-1})_{r, s} ~&\text{if}~s\neq 0, d_{j-1}=0,\\
 \widetilde{\omega}(i_1, \widehat{j}) ~&\text{if}~ s=t_{j-1}, t_{j-1} \neq 0, d_{j-1}=1,\\
 \widetilde{\omega}(i_1, \widehat{j})(Y_{j,\xi(j)+1})_{r, s+1} ~&\text{if}~0<s<t_{j-1},  d_{j-1}=1,\\
 \widetilde{\omega}(i_1, \widehat{j})(Y_{j,\xi(j)+1})_{ r, s-1 } ~&\text{if}~ t_{j-1}<s<0, d_{j-1}=1.
\end{cases}
\]
\end{itemize}
\end{proposition}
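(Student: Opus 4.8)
The plan is to prove Proposition \ref{pro:cluster variables} by induction, exploiting the explicit quiver descriptions in Cases 1--10 together with Proposition \ref{lemma:equations satisfy by q-character}, which reduces the computation of the highest $l$-weight monomial of a new cluster variable to a divisibility check among monomials. First I would set up the induction: we mutate the sequence $\mathscr{S}'=(S^{t_{i_1}}_\xi,\dots)$ in the order prescribed in Section \ref{equivalence classes of generalized HL-modules as cluster variables}, so at each step the vertex $(j,r+s)$ being mutated has, by the appropriate one of Cases 1--10, a completely explicit set of incident arrows in the current quiver $Q^s_\xi[i_1,j]$. Since all exchange relations in the passage from $K_0(\mathscr{C}_\ell)$ to $\mathscr{S}_m(\seed)'$ and then along $\mathscr{S}'$ are $T$-system relations (hence of the binomial form in Proposition \ref{lemma:equations satisfy by q-character}), mutating at $(j,r+s)$ gives
\[
[x_{[i_1,j],s}]\,[x_{j,s}]=\prod_{\text{arrows into }(j,r+s)}[\,\cdot\,]+\prod_{\text{arrows out of }(j,r+s)}[\,\cdot\,],
\]
and the new highest $l$-weight monomial $x_{[i_1,j],s}$ is determined as the quotient of the \emph{larger} of the two products by $x_{j,s}=(Y_{j,\xi(j+1)})_r$ or $(Y_{j,\xi(j)+1})_{r,s}$. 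The base cases are Cases 1 and 4 with $j=i_1$, $s=0$ (and the boundary sub-case $t_{i_1}=0$), where the incident arrows are read off Figure \ref{arrows incident with 0} or Figure \ref{arrows incident with 3} and a direct computation, exactly as in the proof of Lemma \ref{Lem:Hight weight monomial}, yields the stated formulas for $\widetilde{\omega}(i_1,i_1)$.

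For the inductive step I would organize the argument by which case governs the vertex $(j,r+s)$ currently being mutated, and inside each case by the value of the Kronecker deltas $d_j$, $d_{j-1}$, $\delta_{i_1,j_\bullet}$, $\delta_{{i_1}_\bullet,j_\bullet}$ and by the sign of $t_j$ and $s$; these are precisely the parameters appearing in the arrow-multiplicity labels $a_1,a_2,a,b,c,d,e,f,g$ in Figures \ref{arrows incident with 3}--\ref{arrows incident with 9}. In each sub-case one plugs the inductively known monomials for the tail and head vertices into the two products, checks that $x_{j,s}$ divides exactly one of them (this is the analogue of the ``$x_{j+1}\nmid\cdots$'' step repeated throughout the proof of Lemma \ref{Lem:Hight weight monomial}), and divides to obtain the claimed $x_{[i_1,j],s}$. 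The telescoping of $X$-monomials $(Y_{i,s})_k=Y_{i,s}Y_{i,s+2}\cdots Y_{i,s+2k-2}$ under these quotients is governed by Lemmas \ref{equation of omega1} and \ref{equation of omega2} (which split off the last factor of $(\omega(i,j))_r$) together with their obvious $\widetilde{\omega}$-analogues; I would record the needed identities for $(Y_{i,\xi(i)+1})_{r,s}$ and for the ``spectral-shifted'' factors $(Y_{j_\bullet+1,\xi(j_\bullet+2)-2(t_{(j_\bullet+1)}+1)})_{t_{(j_\bullet+1)}-s+1}$ as short lemmas before the main induction to keep the bookkeeping manageable. The symmetry remark preceding Case 4 lets me reduce every $\xi(j)=\xi(j+1)+1$ situation (Cases 4, 8, 9, 10 and parts of 3) to the $\xi(j)=\xi(j+1)-1$ situation by the stated replacement $(j,r+a)\mapsto(j,r-a)$, $p^x\leftrightarrow q^x$, so I would prove the ``$-1$'' cases in full and invoke this symmetry for the ``$+1$'' ones.

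The main obstacle will be the verification of the incident-arrow pictures themselves in the higher cases, especially Case 9 and Case 10, where the arrow multiplicities $a,\dots,g$ are intricate piecewise-constant functions of $t_X,t_{j-1},t_j,s$ and of several deltas, and where long-range arrows connecting $(j,r+s)$ back to vertices like $(i_1-1,r+t_j)$, $(X,r+\min\{t_X,t_{j-1}\})$ or $(j_\bullet,r+s+1)$ are created by the iterated mutations. Following the same inductive scheme as Lemma \ref{arrows accident with j} — induct on the number of vertices of $\mathscr{S}'$ already mutated, track how mutation rules (\ref{mutation rule1})--(\ref{mutation rule3}) transform the arrows of $Q^{s-1}_\xi[i_1,j]$ (or $Q^{t}_\xi[i_1,j-1]$) into those of $Q^s_\xi[i_1,j]$, and in particular watch the creation and cancellation of $2$-cycles — one reduces this to a finite, if lengthy, case analysis; we carry out the representative Case 10 in Appendix \ref{appendix} and assert that the remaining cases follow by the same method. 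Once the arrow pictures are granted, the passage to the $l$-weight monomials is a routine (if large) application of Proposition \ref{lemma:equations satisfy by q-character}, so the real content is the quiver combinatorics rather than the representation theory. Finally, granting Proposition \ref{pro:cluster variables}, Theorem \ref{generalized HL-modules are cluster variables} follows by specializing to the last mutated vertex: for a suitable choice of height function $\xi$ (constructed as in the proof of Theorem \ref{theorem:real and prime}, with the extra conditions $\xi(i_1-1)\neq\xi(i_1+1)$, $\xi(i_k-1)=\xi(i_k+1)$, $\xi(i_k)=\xi(i_k+2)$) the output cluster variable at the terminal vertex of $\mathscr{S}'$ has highest $l$-weight monomial exactly $\widetilde{\omega}(i_1,i_k)=(Y_{i_1,a_1})_{r,r_1}\cdots(Y_{i_k,a_k})_{r,r_k}$, so that module is a cluster variable and hence, by Lemma \ref{theorem of real prime}, real and prime.
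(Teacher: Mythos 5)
Your overall scheme is the same as the paper's: induct along the mutation sequence $\mathscr{S}'$, take the incident-arrow pictures of Cases 1--10 as combinatorial input (proving a representative case by the method of Lemma \ref{arrows accident with j}, as in Appendix \ref{appendix}), and at each mutation use Proposition \ref{lemma:equations satisfy by q-character} to identify the new highest $l$-weight monomial as the dominant side of the exchange relation divided by $x_{j,s}$. Up to that point the plan is sound, apart from a small imprecision in the base case: when $\xi(i_1)=\xi(i_1+1)-1$ and $t_{i_1}>0$ the first mutated vertex is $(i_1,r+t_{i_1})$ (the configuration of Figure \ref{arrows incident with 1}), not $(i_1,r)$.

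There is, however, a genuine gap in how you propose to dispose of the $\xi(j)=\xi(j+1)+1$ cases. The reflection described before the case list sends the arrows at $(j,r+s)$ in $Q^{s}_{\xi}[i_1,j]$ to those at $(j,r-s)$ in $Q^{-s}_{\xi}[i_1,j]$, i.e.\ it exchanges $s>0$ with $s<0$ for the \emph{same} column and the \emph{same} height function (this is how the paper avoids redoing the $s<0$ computations, and it is why items (2) and (3) of the proposition mirror each other). It does not interchange the two orientations $\xi(j)=\xi(j+1)\mp 1$: the orientation is encoded in the horizontal arrows of row $r$, which are invariant under the up-down flip. This is also visible in the statement itself: the formulas of item (4) contain no factor of the shape $(Y_{j_\bullet+1,\xi(j_\bullet+2)-2(t_{j_\bullet+1}+1)})_{t_{j_\bullet+1}-s+1}$ and cannot be obtained from items (1)--(3) by the stated relabeling. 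Consequently the cases governed by Figures \ref{arrows incident with 7}, \ref{arrows incident with 8} and \ref{arrows incident with 9} --- which are the most intricate ones and are exactly the ones needed at the terminal column when $a_{k-1}>a_k$ and $r_k\neq 0$ --- are not covered by your reduction and must be proved directly, as the paper does. A secondary point in those same cases: when $d_{j-1}=1$ the dominant side of the exchange relation is not detected by the non-divisibility test $x_{j,s}\nmid(\cdots)$ alone; one has to compare the two products in the partial order (\ref{partial order on weight lattice}) by exhibiting one product as the other multiplied by a monomial in the variables $A_{i,t}$, which is the argument the paper uses there.
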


\begin{proof}
For $i\in I, j\in\mathbb{Z}, r\in\mathbb{Z}_{\geq1}$, we denote $\prod^{r-1}_{t=0}A_{i, j+2t}$ by $(A_{i, j})_r$.

We prove Proposition \ref{pro:cluster variables} by induction on the length $\boldsymbol{l}$ of mutation sequence. In the following, we only prove the case of $s\geq 0$, since each exchange relation in the case of $s< 0$ can be obtained from one in the case of $s \geq 0$, and the remaining proof can be done as the case of $s \geq 0$.  

For $\boldsymbol{l}=1$, we have two cases to consider.

{\bf Case 1.} In the case where $\xi(i_1)=\xi(i_1+1)+1$, or $\xi(i_1)=\xi(i_1+1)-1$, $t_{i_1}=0$, it follows from Figure \ref{arrows incident with 0} that
\[
[x_{i_1, 0}] [x_{[i_1, i_1],0}]= [x_{i_1,-1}] [x_{i_1,1}] ([x_{i_1+1,0}])^{1-d_{i_1}}+[x_{i_1-1,0}] ([x_{i_1+1,1}][x_{i_1+1,-1}])^{1-d_{i_1}} [x_{i_1+1,0}]^{d_{i_1}}.
\]
Since $ x_{i_1, 0} \nmid  x_{i_1-1, 0} (x_{i_1+1,1} x_{i_1+1,-1})^{1-d_{i_1}} (x_{i_1+1,0})^{d_{i_1}}$, we have 
\[
x_{i_1,0} x_{[i_1, i_1],0} = x_{i_1, -1} x_{i_1, 1}(x_{i_1+1,0})^{1-d_{i_1}}
\]
 by Proposition \ref{lemma:equations satisfy by q-character}. Hence
\begin{gather}
\begin{align*}
 x_{[i_1, i_1],0} &= \frac{x_{i_1,-1} x_{i_1,1} (x_{i_1+1,0})^{1-d_{i_1}}}{x_{i_1,0}} = \frac{(Y_{i_1,\xi(i_1)+1})_{r,-1} (Y_{i_1,\xi(i_1)+1})_{r,1} \big((Y_{i_1+1,\xi(i_1+2)})_r \big)^{1-d_{i_1}}}{(Y_{i_1,\xi(i_1+1)})_r}\\
 &=\begin{cases}
 (Y_{i_1,\xi(i_1)-1})_r \big((Y_{i_1+1,\xi(i_1+2)})_r\big)^{1-d_{i_1}}~&\text{if}~\xi(i_1)=\xi(i_1+1)-1,\\
(Y_{i_1,\xi(i_1)+1})_r \big((Y_{i_1+1,\xi(i_1+2)})_r\big)^{1-d_{i_1}}~&\text{if}~\xi(i_1)=\xi(i_1+1)+1.
 \end{cases}
\end{align*}
\end{gather}

{\bf Case 2.} In the case where $\xi(i_1)=\xi(i_1+1)-1$, $t_{i_1}>0$, it follows from Figure \ref{arrows incident with 1} that
\[
[x_{i_1,t_{i_1}}] [x_{[i_1, i_1],t_{i_1}}] = [x_{i_1-1,t_{i_1}-1}] [x_{i_1,t_{i_1}+1}] [x_{i_1+1,t_{i_1}}] + [x_{i_1-1,t_{i_1}}]  [x_{i_1,t_{i_1}-1}] [x_{i_1+1,t_{i_1}+1}].
\]
Since $ x_{i_1,t_{i_1}} \nmid  x_{i_1-1,t_{i_1}} x_{i_1,t_{i_1}-1} x_{i_1+1,t_{i_1}+1}$, we have $ x_{i_1,t_{i_1}} x_{[i_1, i_1],t_{i_1}} = x_{i_1-1,t_{i_1}-1} x_{i_1, t_{i_1}+1} x_{i_1+1,t_{i_1}}$ by Proposition \ref{lemma:equations satisfy by q-character}. Hence
\begin{gather}
\begin{align*}
 x_{[i_1, i_1],t_{i_1}} &= \frac{x_{i_1-1,t_{i_1}-1} x_{i_1,t_{i_1}+1} x_{i_1+1,t_{i_1}}} {x_{i_1,t_{i_1}}} = \frac{(Y_{i_1-1,\xi(i_1-1)+1})_{r, t_{i_1}-1} (Y_{i_1,\xi(i_1)+1})_{r, t_{i_1}+1} (Y_{i_1+1,\xi(i_1+1)+1})_{r, t_{i_1}}} {(Y_{i_1,\xi(i_1)+1})_{r, t_{i_1}}}  \\
&=(Y_{i_1-1,\xi(i_1-1)+1})_{r, t_{i_1}-1} Y_{i_1,\xi(i_1)+1-2(t_{i_1}+1)} (Y_{i_1+1,\xi(i_1+1)+1})_{r, t_{i_1}}.
\end{align*}
\end{gather}

Suppose that our result holds for $\boldsymbol{l}\geq m$, $m\in \mathbb{Z}_{\geq 1}$. By induction, we need to prove it for $\boldsymbol{l}=m+1$.

For $\boldsymbol{l}=m+1$, applying the first $m$ vertices in $\mathscr{S}'$ to the seed $\mathscr{S}_m(\seed)'$, we get a quiver and in this quiver, all arrows incident to the $(m+1)$th vertex in $\mathscr{S}'$ must be one of diagrams shown in Figures \ref{arrows incident with 2}--\ref{arrows incident with 9}. We next prove Proposition \ref{pro:cluster variables} by cases.

{\bf Case 1.} In the case where $\xi(i_1)=\xi(i_1+1)-1$, $t_{i_1}>0$, $j=i_1$, $s=0$, it follows from Figure \ref{arrows incident with 2} (left) that
\[
[x_{i_1, 0}] [x_{[i_1,i_1],0}] = [x_{i_1, -1}] [x_{i_1, t_{i_1}+1}] [x_{i_1+1, 1}]^{d_{i_1}} [x_{i_1+1,0}]^{1-d_{i_1}} + [x_{[i_1, i_1],1}] [x_{i_1+1,-1}]^{1-d_{i_1}} [x_{i_1+1,0}]^{d_{i_1}}.
\]
Since $x_{i_1,0} \nmid  x_{[i_1, i_1],1} (x_{i_1+1,-1})^{1-d_{i_1}} (x_{i_1+1,0})^{d_{i_1}}$ by induction, we have 
\[
 x_{i_1,0} x_{[i_1, i_1],0} =   x_{i_1,-1} x_{i_1, t_{i_1}+1} (x_{i_1+1,1})^{d_{i_1}}  (x_{i_1+1,0})^{1-d_{i_1}}
 \]
by Proposition \ref{lemma:equations satisfy by q-character}. Hence
\begin{gather}
\begin{align*}
x_{[i_1,i_1],0} &=\frac{   x_{i_1,-1} x_{i_1,t_{i_1}+1} (x_{i_1+1,1})^{d_{i_1}}  (x_{i_1+1,0})^{1-d_{i_1}} }{x_{i_1,0} }
\\
&=\frac{  (Y_{i_1,\xi(i_1)+1})_{r,-1} (Y_{i_1, \xi(i_1)+1})_{r, t_{i_1}+1}  ((Y_{i_1+1,\xi(i_1+1)+1})_{r,1})^{d_{i_1}} ((Y_{i_1+1,\xi(i_1+2)})_{r})^{1-d_{i_1} }}{(Y_{i_1,\xi(i_1+1)})_{r} }
\\
&= (Y_{i_1,\xi(i_1)-1-2t_{i_1}})_{r+t_{i_1}} ((Y_{i_1+1,\xi(i_1+1)+1})_{r,1})^{d_{i_1}} ((Y_{i_1+1,\xi(i_1+2)})_{r})^{1-d_{i_1} }.
\end{align*}
\end{gather}

{\bf Case 2.} In the case where $\xi(i_1)=\xi(i_1+1)-1$, $j=i_1$, $t_{i_1} > s >0$, it follows from Figure \ref{arrows incident with 2} (right) that
\[
[x_{i_1, s }] [x_{[i_1, i_1], s }] = [x_{i_1-1, s -1}] [x_{i_1, t_{i_1}+1}] [x_{i_1+1, s }] + [x_{i_1, s -1}] [x_{[i_1, i_1], s +1}].
\]
Since $x_{i_1, s } \nmid x_{i_1, s -1} x_{[i_1, i_1], s+1 }$ by induction, we have $x_{i_1, s } x_{[i_1, i_1], s } = x_{i_1-1, s -1} x_{i_1, t_{i_1}+1} x_{i_1+1, s }$ by Proposition \ref{lemma:equations satisfy by q-character}. Hence
\begin{gather}
\begin{align*}
x_{[i_1, i_1], s } &= \frac{x_{i_1-1, s -1} x_{i_1, t_{i_1}+1}  x_{i_1+1, s } }{x_{i_1, s } } =\frac{(Y_{i_1-1,\xi(i_1-1)+1})_{r, s -1}  (Y_{i_1,\xi(i_1)+1})_{r, t_{i_1}+1} (Y_{i_1+1,\xi(i_1+1)+1})_{r, s } }{(Y_{i_1,\xi(i_1)+1})_{r, s }}\\
&= (Y_{i_1-1,\xi(i_1-1)+1})_{r, s -1} (Y_{i_1,\xi(i_1)+1-2(t_{i_1}+1)})_{t_{i_1}- s +1} (Y_{i_1+1,\xi(i_1+1)+1})_{r, s }.
 \end{align*}
 \end{gather}

{\bf Case 3.} In the case where $j>i_1$, $\xi(j)=\xi(j+1)-1$, $s=t_j=0$, it follows from Figure \ref{arrows incident with 3} that
\begin{align*}
[x_{j,0}] [x_{[i_1, j],0}]=& [x_{[i_1, j-1],0}] ([x_{j,-1}] [x_{j,1}])^{d_{j-1}} [x_{j+1,0}]^{1-d_j} + 
 \bigg( [x_{i_1-1,0}]^{\delta_{{i_1}_\bullet, j_\bullet}} ([x_{i_1,1}] [x_{i_1,-1}])^{\delta_{i_1, j_\bullet}} + \\ 
&(1-\delta_{{i_1}_\bullet, j_\bullet}- \delta_{i_1, j_\bullet}) [x_{[i_1, j_\bullet-1],0}]  \big( [x_{j_\bullet,1}]^{q^0(t_{j_\bullet})} [x_{j_\bullet,-1}]^{p^0(t_{j_\bullet})} \big)^{d_{j_\bullet-1}} \bigg)\\
&([x_{j+1,-1}] [x_{j+1,1}])^{1-d_j} [x_{j+1,0}]^{d_j}.
\end{align*}
Since
\begin{align*}
x_{j,0} \nmid  &
 \bigg( (x_{i_1-1,0})^{\delta_{{i_1}_\bullet, j_\bullet}} ( x_{i_1,1}  x_{i_1,-1} )^{\delta_{i_1, j_\bullet}} + 
(1-\delta_{{i_1}_\bullet, j_\bullet}- \delta_{i_1, j_\bullet})  x_{[i_1, j_\bullet-1],0}  \\ 
&\big( ( x_{j_\bullet,1})^{q^0(t_{j_\bullet})} (x_{j_\bullet,-1})^{p^0(t_{j_\bullet})} \big)^{d_{j_\bullet-1}} \bigg)( x_{j+1,-1} x_{j+1,1} )^{1-d_j} (x_{j+1,0})^{d_j}
\end{align*}
by induction, we have
\[
 x_{j,0} x_{[i_1, j],0} = x_{[i_1, j-1],0}  ( x_{j,-1} x_{j,1} )^{d_{j-1}}  (x_{j+1,0})^{1-d_j} 
\]
by Proposition \ref{lemma:equations satisfy by q-character}. Hence 
\begin{gather}
\begin{align*}
 x_{[i_1,j],0} &= \frac{ x_{[i_1,j-1],0} (x_{j,-1} x_{j,1})^{d_{j-1}} (x_{j+1,0})^{1-d_j}} { x_{j,0}}\\
&=\begin{cases}
 \frac{ \widetilde{\omega}(i_1,(j-1)_\bullet+1)  (Y_{j ,\xi(j )+1})_{r,-1} (Y_{j ,\xi(j )+1})_{r,1}  ((Y_{j+1,\xi(j+2)})_{r})^{1-d_{j }}}{ (Y_{j ,\xi(j+1)})_{r}}=\widetilde{\omega}(i_1, j ) ((Y_{j+1,\xi(j+2)})_{r})^{1-d_{j }} ~&\text{if}~d_{j-1}=1,\\
 \frac{ \widetilde{\omega}(i_1, \widehat{j} )  (Y_{j,\xi(j+1)})_{r}((Y_{j+1,\xi(j+2)})_{r})^{1-d_{j}}}{ (Y_{j,\xi(j+1)})_{r}}=\widetilde{\omega}(i_1, \widehat{j}) ((Y_{j+1,\xi(j+2)})_{r})^{1-d_{j}}~&\text{if}~d_{j-1}=0.
 \end{cases}
\end{align*}	
\end{gather}

{\bf Case 4.} In the case where $j>i_1$, $\xi(j)=\xi(j+1)-1$, $t_j>0$, it follows from Figure \ref{arrows incident with 4} that
\begin{gather}
\begin{align*}
[x_{j,0}] [x_{[i_1, j],0}]=& [x_{[i_1, j-1],0}] [x_{j,-1}]^{d_{j-1}} [x_{j, t_j+1}]^{d_{j-1}} [x_{j+1,0}]^{1-d_j} [x_{j+1,1}]^{d_j}   + \bigg(  [x_{i_1,-1} ]^{\delta_{i_1, j_\bullet}} + \\ 
&(1-\delta_{{i_1}_\bullet, j_\bullet}- \delta_{i_1,j_\bullet}) [x_{[i_1, j_\bullet-1],0}]^{q^{0}(t_{j_\bullet})}    [x_{j_\bullet,-1}]^{p^0(t_{j_\bullet})d_{j_\bullet-1}} \bigg) [x_{[i_1, j],1} ][x_{j+1,-1}]^{1-d_j} [x_{j+1,0}]^{d_j}.
\end{align*}
\end{gather}
Since 
\begin{gather}
\begin{align*}
x_{j,0}  \nmid \bigg(  (x_{i_1, -1} )^{\delta_{i_1, j_\bullet}} + 
(1-\delta_{{i_1}_\bullet, j_\bullet}- \delta_{i_1, j_\bullet}) (x_{[i_1, j_\bullet-1],0})^{q^{0}(t_{j_\bullet})}    (x_{j_\bullet,-1})^{p^0(t_{j_\bullet})d_{j_\bullet-1}} \bigg) 
x_{[i_1, j],1} (x_{j+1,-1} )^{1-d_j} (x_{j+1,0})^{d_j}
\end{align*}
\end{gather}
by induction, we have
\[
 x_{j,0} x_{[i_1, j],0} =  x_{[i_1, j-1],0} (x_{j,-1} )^{d_{j-1}} (x_{j, t_j+1})^{d_{j-1}} (x_{j+1,0})^{1-d_j} (x_{j+1,1})^{d_j}
\]
by Proposition \ref{lemma:equations satisfy by q-character}. Hence 

\begin{gather}
\begin{align*}
 x_{[i_1, j],0}&= \frac{x_{[i_1, j-1],0} (x_{j,-1} )^{d_{j-1}} (x_{j, t_j+1})^{d_{j-1}} (x_{j+1,0})^{1-d_j} (x_{j+1,1})^{d_j} }{x_{j,0}}\\
& =  \begin{cases}
   \frac{\widetilde{\omega}(i_1, (j-1)_{\bullet}+1) (Y_{j, \xi(j)+1})_{r,-1} (Y_{j,\xi(j)+1})_{r, t_{j}+1}  ((Y_{j+1,\xi(j+2)})_{r})^{ 1-d_{j} }  ((Y_{j+1,\xi(j+1)+1})_{r,1})^{ d_{j} }} {(Y_{j,\xi(j+1)})_{r} }~&\text{if}~ d_{j-1}=1,\\
 \frac{\widetilde{\omega}(i_1, (j-1)_{\bullet}+1) (Y_{j,\xi(j)+1})_r ((Y_{j+1,\xi(j+2)})_{r})^{1-d_{j }}  ((Y_{j+1,\xi(j+1)+1})_{r,1})^{d_{j }}} {(Y_{j ,\xi(j+1)})_{r} }
 ~&\text{if}~ d_{j-1}=0,
 \end{cases}\\
&=  \begin{cases}
\widetilde{\omega}(i_1, j ) ((Y_{j+1,\xi(j+2)})_{r})^{ 1-d_{j } }  ((Y_{j+1,\xi(j+1)+1})_{r,1})^{ d_{j } }  ~&\text{if}~ d_{j-1}=1,\\
  \widetilde{\omega}(i_1, (j-1)_{\bullet}+1)  ((Y_{j+1,\xi(j+2)})_{r})^{1-d_{j }}  ((Y_{j+1,\xi(j+1)+1})_{r,1})^{d_{j }}  ~&\text{if}~ d_{j-1}=0.
   \end{cases}
\end{align*}
 \end{gather}
  
{\bf Case 5.} In the case where $j>i_1$, $\xi(j)=\xi(j+1)-1$, $t_j>s>0$, it follows from Figure \ref{arrows incident with 6} that
\begin{gather}			
\begin{align*}
[x_{j, s }] [x_{[i_1, j], s }] = &[x_{[i_1, j_\bullet], s }]^{\delta_{ s ,t_{j_\bullet}}}  [x_{j, s -1}]  [x_{[i_1, j], s +1}] +  [x_{[i_1, j-2],0}]^{ q^{t_{j-1}}( s ) d_{j-1}\delta_{1, s }} [x_{[i_1, j-1], s -1}]^{d_{j-1}  q^{t_{j-1}}( s )  (1-\delta_{1, s })}\\
&  [x_{j-1, s }]^{d_{j-1}  p^{t_{j-1}}( s -1) } [x_{[i_1, j-1], s }]^{1-d_{j-1}} [x_{j, t_j+1}]^{d_{j-1}} [x_{j+1, s }].
\end{align*}
\end{gather}
Since $x_{j, s } \nmid (x_{[i_1, j_\bullet], s })^{\delta_{ s ,t_{j_\bullet}}} x_{j, s -1}  x_{[i_1, j], s +1}$ by induction, we have
\begin{align*}
 x_{j, s } x_{[i_1, j], s }=&(x_{[i_1, j-2],0})^{ q^{t_{j-1}}( s ) d_{j-1}\delta_{1, s }} 
(x_{[i_1, j-1], s -1})^{d_{j-1}  q^{t_{j-1}}( s )  (1-\delta_{1, s })} (x_{j-1, s })^{d_{j-1}  p^{t_{j-1}}( s -1) }\\
& (x_{[i_1, j-1], s })^{1-d_{j-1}} (x_{j, t_j+1})^{d_{j-1}} x_{j+1, s }
\end{align*}
by Proposition \ref{lemma:equations satisfy by q-character}. Hence 
\begin{gather}
\begin{align*}
x_{[i_1, j], s } & =  \begin{cases}
\displaystyle  \frac{ x_{j-1, s }   x_{j, t_j+1} x_{j+1, s } } {x_{j, s } }  ~&\text{if}~ d_{j-1}=1, t_{j-1}<s,\\
\displaystyle \frac { x_{[i_1, j-2],0}  x_{j, t_j+1}  x_{j+1, s } } { x_{j, s } }  ~&\text{if}~ d_{j-1}=1,s=1,t_{j-1}\geq 1,\\
\displaystyle\frac{ x_{[i_1, j-1], s -1}  x_{j, t_j+1}  x_{j+1, s }} {x_{j, s } }   ~&\text{if}~ d_{j-1}=1,t_{j-1}\geq s>1,\\
\displaystyle\frac{  x_{[i_1, j-1], s }  x_{j+1, s }} {x_{j, s } }~&\text{if}~ d_{j-1}=0,
\end{cases}\\
&=  \begin{cases}
\displaystyle\frac{(Y_{j-1,\xi(j-1)+1})_{r, s} (Y_{j,\xi(j)+1})_{r, t_{j}+1} (Y_{j+1,\xi(j+1)+1})_{r, s}} {(Y_{j ,\xi(j )+1})_{r, s}}
& \text{if}~ d_{j-1}=1, t_{j-1}< s,\\
\displaystyle\frac{ x_{[i_1, j-2],0} (Y_{j,\xi(j)+1})_{r, t_{j}+1} (Y_{j+1,\xi(j+1)+1})_{r, s}} {(Y_{j,\xi(j)+1})_{r, s}}    ~&\text{if}~ d_{j-1}=1,s=1,t_{j-1}\geq 1,\\
\displaystyle \frac{  x_{[i_1, j-1], s -1} (Y_{j,\xi(j)+1})_{r, t_{j}+1} (Y_{j+1,\xi(j+1)+1})_{r, s }} {(Y_{j,\xi(j)+1})_{r, s }}   ~&\text{if}~ d_{j-1}=1,t_{j-1}\geq s>1,\\
\displaystyle\frac{ x_{[i_1, j-1], s} (Y_{j+1,\xi(j+1)+1})_{r, s } } {(Y_{j,\xi(j)+1})_{r, s }}  ~&\text{if}~ d_{j-1}=0,
\end{cases}
\end{align*}
\end{gather}

\begin{gather}
\begin{align*}
& =  \begin{cases}
(Y_{j-1,\xi(j-1)+1})_{r, s} (Y_{j,\xi(j)+1-2(t_{j}+1)})_{t_{j}- s +1} (Y_{j+1,\xi(j+1)+1})_{r, s  } \hspace*{\fill} ~\text{if}~ d_{j-1}=1, t_{j-1}< s, \\
\widetilde{\omega}(i_1,\widehat{j-1}) (Y_{j-1, \xi(j-1)-1})_{r, s -1}  (Y_{j, \xi(j)+1-2(t_{j}+1)})_{ t_{j}- s +1} (Y_{j+1,\xi(j+1)+1})_{r, s} \\
\hspace*{\fill} ~\text{if}~d_{j-1}=1,t_{j-1}\geq s\geq1,d_{j-2}=0, \\
\widetilde{\omega}(i_1,\widehat{j-1}) (Y_{j-1,\xi(j-1)+1 })_{r, s }  (Y_{j,\xi(j)+1-2(t_{j}+1)})_{ t_{j}- s +1} (Y_{j+1,\xi(j+1)+1})_{r, s }\\
\hspace*{\fill}  ~\text{if}~d_{j-1}=1,t_{j-1}\geq s\geq1,d_{j-2}=1,  \\
\tilde{\omega}(i_1,\widehat{(j-1)_\bullet})^{q^{t_{(j-1)_\bullet}}( s )} (Y_{(j-1)_{\bullet},\xi((j-1)_{\bullet})+1})_{r, s } (Y_{(j-1)_{\bullet}+1,\xi((j-1)_{\bullet}+2)-2(t_{(j-1)_{\bullet}+1}+1)})_{t_{(j-1)_{\bullet}+1}-s +1}\\
\hspace*{\fill}  (Y_{j+1,\xi(j+1)+1})_{r, s } ~\quad \text{if}~d_{j-1}=0, (j-1)_\bullet>i_1, d_{(j-1)_{\bullet}-1}=1,~\text{or}~d_{j-1}=0, (j-1)_\bullet= i_1, \\
\tilde{\omega}(i_1,\widehat{(j-1)_\bullet})  (Y_{(j-1)_{\bullet},\xi((j-1)_{\bullet})+1})_{r, s -1} (Y_{(j-1)_{\bullet}+1,\xi((j-1)_{\bullet}+2)-2(t_{(j-1)_{\bullet}+1}+1)})_{t_{(j-1)_{\bullet}+1}-s+1} \\
\hspace*{\fill}  (Y_{j+1,\xi(j+1)+1})_{r, s } ~\quad \text{if}~d_{j-1}=0, (j-1)_\bullet>i_1, d_{(j-1)_{\bullet}-1}=0,~\text{or}~d_{j-1}=0, (j-1)_\bullet={i_1}_\bullet.\\
\end{cases}
\end{align*}
\end{gather}

{\bf Case 6.} In the case where $j>i_1$, $\xi(j)=\xi(j+1)-1$, $t_j=s>0$, it follows from Figure \ref{arrows incident with 5} that
\begin{gather}			
\begin{align*}
[x_{j, s }] [x_{[i_1, j], s }]=& [x_{i_1-1,  s }]^{\delta_{{i_1}_{\bullet},j_{\bullet}}} [x_{j_\bullet, s +1}]^{\delta_{i_1,j_\bullet}+(1-\delta_{{i_1}_\bullet, j_\bullet}-\delta_{i_1,j_\bullet}) d_{j_\bullet-1} p^{t_{j_\bullet}}( s )} [x_{[i_1,j_\bullet], s }]^{(1-\delta_{{i_1}_\bullet, j_\bullet}) q^{t_{j_\bullet}}( s )}  [x_{j, s -1}]  [x_{j+1, s +1}]  +\\
&[x_{[i_1,j-2],0}]^{d_{j-1}  q^{t_{j-1}}( s ) \delta_{1, s }}  [x_{[i_1,j-1], s -1}]^{d_{j-1} q^{t_{j-1}}( s )  (1-\delta_{1, s })} 	[x_{j-1, s }]^{d_{j-1} p^{t_{j-1}}( s-1 )  } [x_{[i_1,j-1], s }]^{1-d_{j-1}} 	[x_{j, s +1}]^{d_{j-1}} [x_{j+1, s }].
\end{align*}
\end{gather}
Since 
\begin{gather}			
\begin{align*}
x_{j, s } \nmid (x_{i_1-1,  s })^{\delta_{{i_1}_{\bullet}, j_{\bullet}}} (x_{j_\bullet, s +1})^{\delta_{i_1,j_\bullet}+(1-\delta_{{i_1}_\bullet, j_\bullet}-\delta_{i_1,j_\bullet}) d_{j_\bullet-1} p^{t_{j_\bullet}}( s )} (x_{[i_1,j_\bullet], s })^{(1-\delta_{{i_1}_\bullet, j_\bullet}) q^{t_{j_\bullet}}( s )}  x_{j, s -1}  x_{j+1, s +1}
\end{align*}
\end{gather}
by induction, we have
\begin{align*}
x_{j, s } x_{[i_1,j], s }= & (x_{[i_1,j-2],0})^{d_{j-1}  q^{t_{j-1}}( s ) \delta_{1, s }}  (x_{[i_1,j-1], s -1})^{d_{j-1} q^{t_{j-1}}( s )  (1-\delta_{1, s })} 	(x_{j-1, s })^{d_{j-1} p^{t_{j-1}}( s-1 )  }\\
& (x_{[i_1,j-1], s })^{1-d_{j-1}} (x_{j, s +1})^{d_{j-1}} x_{j+1, s }
\end{align*}
by Proposition \ref{lemma:equations satisfy by q-character}. 
The rest of the proof is the same as the proof of Case 5.
 
{\bf Case 7.} In the case where $j>i_1$, $\xi(j)=\xi(j+1)+1$, $t_j \geq 0$, $s=0$, it follows from Figure \ref{arrows incident with 7} that		
 \begin{align*}
[x_{j,0}] [x_{[i_1,j],0}]=& [x_{i_1-1, 0}]^{\delta_{{i_1}_\bullet, j_\bullet}} 
[x_{[i_1,j_\bullet-1],0}]^{1-\delta_{{i_1}_\bullet, j_\bullet}-\delta_{i_1,j_\bullet}}  [x_{j_\bullet,-1}]^{( \delta_{i_1,j_\bullet}+(1-\delta_{i_1,j_\bullet}-\delta_{{i_1}_\bullet, j_\bullet}) d_{j_\bullet-1}) p^{0}(t_{j_\bullet})} \\
&[x_{j_\bullet,1}]^{( \delta_{i_1,j_\bullet}+(1-\delta_{i_1,j_\bullet}-\delta_{{i_1}_\bullet, j_\bullet}) d_{j_\bullet-1}) q^{0}(t_{j_\bullet})}  [x_{j_\bullet, t_{j_\bullet}+1}]^{( \delta_{i_1,j_\bullet}+(1-\delta_{i_1,j_\bullet}-\delta_{{i_1}_\bullet, j_\bullet}) d_{j_\bullet-1})p^{1}(t_{j_\bullet})} \\ 
  &[x_{j_\bullet, t_{j_\bullet}-1}]^{( \delta_{i_1,j_\bullet}+(1-\delta_{i_1,j_\bullet}-\delta_{{i_1}_\bullet, j_\bullet}) d_{j_\bullet-1})q^{-1}(t_{j_\bullet})} 	([x_{j+1,-1}] [x_{j+1,1}])^{1-d_j} [x_{j+1,0}]^{d_j} +\\
& [x_{[i_1,j-1],0}]  [x_{j,-1}]^{d_{j-1}p^{0}(t_{j-1})} [x_{j,1}]^{d_{j-1}q^{0}(t_{j-1})} [x_{j+1,0}]^{1-d_j}.		
\end{align*}
Since 
 \begin{align*}
x_{j,0} \nmid &(x_{i_1-1, 0})^{\delta_{{i_1}_\bullet, j_\bullet}} 
(x_{[i_1,j_\bullet-1],0})^{1-\delta_{{i_1}_\bullet, j_\bullet}-\delta_{i_1,j_\bullet}}  (x_{j_\bullet,-1})^{( \delta_{i_1,j_\bullet}+(1-\delta_{i_1,j_\bullet}-\delta_{{i_1}_\bullet, j_\bullet}) d_{j_\bullet-1}) p^{0}(t_{j_\bullet})} \\
&(x_{j_\bullet,1})^{( \delta_{i_1,j_\bullet}+(1-\delta_{i_1,j_\bullet}-\delta_{{i_1}_\bullet, j_\bullet}) d_{j_\bullet-1}) q^{0}(t_{j_\bullet})}  (x_{j_\bullet, t_{j_\bullet}+1})^{( \delta_{i_1,j_\bullet}+(1-\delta_{i_1,j_\bullet}-\delta_{{i_1}_\bullet, j_\bullet}) d_{j_\bullet-1})p^{1}(t_{j_\bullet})} \\   &(x_{j_\bullet, t_{j_\bullet}-1})^{( \delta_{i_1,j_\bullet}+(1-\delta_{i_1,j_\bullet}-\delta_{{i_1}_\bullet, j_\bullet}) d_{j_\bullet-1})q^{-1}(t_{j_\bullet})} 	(x_{j+1,-1} x_{j+1,1})^{1-d_j} (x_{j+1,0})^{d_j}  		
\end{align*}
by induction, we have
\[
x_{j,0} x_{[i_1,j],0}=  x_{[i_1,j-1],0}  (x_{j,-1})^{d_{j-1}p^{0}(t_{j-1})}  
(x_{j,1})^{d_{j-1}q^{0}(t_{j-1})} (x_{j+1,0})^{1-d_j}  	
\]
by Proposition \ref{lemma:equations satisfy by q-character}. Hence
\begin{align*}
 x_{[i_1,j],0}&=\frac{  x_{[i_1,j-1],0}  (x_{j,-1})^{d_{j-1}p^{0}(t_{j-1})}  
(x_{j,1})^{d_{j-1}q^{0}(t_{j-1})} (x_{j+1,0})^{1-d_j} } { x_{j,0} } 	\\
&= \begin{cases}
\frac{ x_{[i_1,j-1],0}  x_{j,-1}  (x_{j+1,0})^{1-d_j}} {x_{j,0}} ~&\text{if}~ d_{j-1}=1, t_{j-1}>0,\\
  \frac{ x_{[i_1,j-1],0}   x_{j,1}  (x_{j+1,0})^{1-d_j} } {x_{j,0}}~&\text{if}~ d_{j-1}=1, t_{j-1}<0,\\
  \frac{ x_{[i_1,j-1],0} x_{j,-1}  x_{j,1}  (x_{j+1,0})^{1-d_j} }{x_{j,0}}~&\text{if}~ d_{j-1}=1, t_{j-1}=0,\\
  \frac{ x_{[i_1,j-1],0}    (x_{j+1,0})^{1-d_j}} {x_{j,0}}~&\text{if}~ d_{j-1}=0,\\
   \end{cases}\\
&= \begin{cases}
   \frac{  \widetilde{\omega}(i_1,(j-1)_\bullet+1) (Y_{j ,\xi(j )+1})_{r,1} (Y_{j ,\xi(j )+1})_{r,-1} ((Y_{j+1,\xi(j+2)})_{r} )^{1-d_{j }}} {(Y_{j ,\xi(j+1)})_{r}}~&\text{if}~ d_{j-1}=1, t_{j-1}>0,\\
 \frac{  \widetilde{\omega}(i_1,(j-1)_\bullet+1) (Y_{j ,\xi(j )+1})_{r,-1} (Y_{j ,\xi(j )+1})_{r,1} ((Y_{j+1,\xi(j+2)})_{r} )^{1-d_{j }}} {(Y_{j ,\xi(j+1)})_{r}} ~&\text{if}~ d_{j-1}=1, t_{j-1}<0,\\
   \frac{ \widetilde{\omega}(i_1,(j-1)_\bullet+1)  (Y_{j ,\xi(j )+1})_{r,-1}(Y_{j ,\xi(j )+1})_{r,1}  ((Y_{j+1,\xi(j+2)})_{r} )^{1-d_{j }}} {(Y_{j ,\xi(j+1)})_{r}}~&\text{if}~ d_{j-1}=1, t_{j-1}=0,\\
\frac{\widetilde{\omega}(i_1, (j-1)_{\bullet}+1 ) (Y_{j, \xi(j)-1})_r ((Y_{j+1,\xi(j+2)})_{r} )^{1-d_{j }}} {(Y_{j ,\xi(j+1)})_{r}} ~&\text{if}~ d_{j-1}=0,\\
   \end{cases}\\
&=\begin{cases}
\widetilde{\omega}(i_1,j) ((Y_{j+1,\xi(j+2)})_{r} )^{1-d_{j }} ~&\text{if}~ d_{j-1}=1, \\
 \widetilde{\omega}(i_1, (j-1)_\bullet+1 )((Y_{j+1,\xi(j+2)})_{r} )^{1-d_{j }} ~&\text{if}~ d_{j-1}=0.\\
 \end{cases}
\end{align*}

{\bf Case 8.} In the case where $j>i_1$, $\xi(j)=\xi(j+1)+1$, $t_j>0$, $s=1$, it follows from Figure \ref{arrows incident with 8} that	
\begin{gather}
 \begin{align*}	
[x_{j,1}] [x_{[i_1, j],1}] = &[x_{[i_1, j-1],0}] [x_{j,2}]^{(1-d_{j-1})+d_{j-1}p^{2}(t_{j-1})} + [x_{i_1-1, 1}]^{\delta_{1,t_j}d_{j-1}\delta_{{i_1}_\bullet, (j-1)_\bullet}} [x_{[i_1, Y],0}]^{(1-\delta_{i_1,X})d_{X-1}q^{0}(t_Y)}  \\
&[x_{X,-1}]^{ (1-\delta_{i_1,X})d_{X-1} p^{0}(t_{X})+\delta_{i_1, X} }
[x_{[i_1, X],1}]^{ d_{X-1}d_{j-1}(\delta_{1,t_X}+p^{2}(t_{X})\delta_{1,t_{j}}) +(1-d_{X-1})(1- \delta_{i_1, X})d_{j-1}\delta_{1,t_j}} \\ 
&[x_{ X,2 }]^{(1-\delta_{X, i_1})d_{X-1}d_{j-1} \delta_{1,t_j} q^{1}(t_X) + \delta_{X, i_1}d_{j-1}\delta_{1,t_j}} 
 [x_{[i_1, j-1],1}]^{(1-d_{j-1}) p^{1}(t_{j-1})} [x_{[i_1, j-1],2}]^{d_{j-1}p^2(t_{j-1})} \\
 &
 [x_{j-1,2}]^{(1-d_{j-1})((1-d_{ i_1})\delta_{j-1, i_1}+(1-\delta_{j-1, i_1}) d_{j-2} q^1(t_{j-1}))}  [x_{j+1,1}],
\end{align*}
\end{gather}
where $X=\max\{x \mid i_1 \leq x<j, \xi(x)=\xi(x+1)+1\}, Y=X-1$.
Since 
 \begin{align*}
 x_{j,1}  \nmid  & (x_{[i_1, j-2],0})^{(1-\delta_{i_1,j-1})d_{j-2}q^{0}(t_{j-2})}  
 (x_{j-1,-1})^{(1-\delta_{i_1,  j-1})d_{j-2}p^{0}(t_{j-1}) +\delta_{i_1,  j-1}}   (x_{[i_1, j-1],1})^{ p^{1}(t_{j-1})} \\
 & (x_{j-1,2})^{ (1-d_{ i_1})\delta_{ i_1,j-1}+(1-\delta_{ i_1, j-1}) d_{j-2} q^1(t_{j-1}) }  x_{j+1,1}
\end{align*}
if $d_{j-1}=0$, and 
\begin{gather}
 \begin{align*}	
x_{[i_1, j-1],0} (x_{j,2})^{ p^{2}(t_{j-1})} = &(A_{(j-1)_\bullet+1,\xi((j-1)_{\bullet}+1)-2})_{r+1}(A_{(j-1)_\bullet+2,\xi((j-1)_{\bullet}+2)-2})_{r+1}\dots
 (A_{j,\xi(j)-2})_{r+1} (x_{i_1-1, 1})^{\delta_{1,t_j} \delta_{{i_1}_\bullet, (j-1)_\bullet}} \\
 &(x_{[i_1, Y],0})^{(1-\delta_{i_1,X})d_{X-1}q^{0}(t_Y)}   (x_{X,-1})^{(1-\delta_{i_1,X}) d_{X-1}p^{0}(t_{X}) +\delta_{i_1, X}}(x_{[i_1, X],1})^{ d_{X-1} (\delta_{1,t_X}+p^{2}(t_{X})\delta_{1,t_{j}}) +(1-d_{X-1})(1- \delta_{i_1, X}) \delta_{1,t_j}}   \\
&(x_{ X,2 })^{(1-\delta_{X, i_1})d_{X-1}  \delta_{1,t_j} q^{1}(t_X) + \delta_{X, i_1} \delta_{1,t_j}} (x_{[i_1, j-1],2})^{ p^2(t_{j-1})} x_{j+1,1} 
\end{align*}
\end{gather}
if $d_{j-1}=1$ by induction, we have
\[
x_{j,1} x_{[i_1, j],1} = x_{[i_1, j-1],0} (x_{j,2})^{(1-d_{j-1})+d_{j-1}p^{2}(t_{j-1})}	
\]
by Proposition \ref{lemma:equations satisfy by q-character}. Hence
\begin{gather}	
 \begin{align*}
  x_{[i_1, j],1} &= \frac{ x_{[i_1, j-1],0} (x_{j,2})^{(1-d_{j-1})+d_{j-1}p^{2}(t_{j-1})}} { x_{j,1}} \\
&  =\begin{cases}
  \frac{ \widetilde{\omega}(i_1, (j-1)_\bullet+1) (Y_{j,\xi(j)+1})_{r,1} ((Y_{j, \xi(j)+1})_{r,2})^{p^{2}(t_{j-1})} } {  (Y_{j,\xi(j)+1})_{r,1}} ~&\text{if}~d_{j-1}=1,\\
 \frac{  \widetilde{\omega}(i_1, (j-1)_\bullet+1) (Y_{j,\xi(j)-1})_{r}  (Y_{j, \xi(j)+1})_{r,2} } {  (Y_{j,\xi(j)+1})_{r,1}}   ~&\text{if}~d_{j-1}=0,
  \end{cases}\\
 &=\begin{cases}
 \widetilde{\omega}(i_1, \widehat{j} ) ((Y_{j,\xi(j)+1})_{r,2})^{p^{2}(t_{j-1})}~&\text{if}~d_{j-1}=1,\\
 \widetilde{\omega}(i_1, \widehat{j} ) (Y_{j,\xi(j)-1})_{r,1}~&\text{if}~d_{j-1}=0.   
  \end{cases}
\end{align*}
\end{gather}

{\bf Case 9.} In the case where $j>i_1$, $\xi(j)=\xi(j+1)+1$, $t_j \geq s>1$, it follows from Figure \ref{arrows incident with 9} that
\begin{gather}	
 \begin{align*}
[x_{j, s }] [x_{[i_1, j], s } ]=& [x_{[i_1, j], s -1}] [x_{j, s +1}]^{(1-d_{j-1})+d_{j-1}p^{ s +1}(t_{j-1})} + [x_{i_1-1,  s }]^{\delta_{ s ,t_j}d_{j-1}\delta_{{i_1}_\bullet, (j-1)_\bullet}} [x_{[i_1, Y],0}]^{(1-\delta_{i_1,X})d_{X-1}q^{0}(t_Y)}  \\
&[x_{X,-1}]^{ (1-\delta_{i_1, X})d_{X-1}p^{0}(t_{X})+\delta_{i_1, X}} [x_{[i_1, X], \min\{t_X, t_{j}\}}]^{d_{j-1}d_{X-1} p^{\min\{t_X, t_{j-1}\}}( s )p^{1}(t_X)}\\
 & [x_{[i_1, j-1],t_{j-1}}]^{(1-d_{j-1})d_{j-2}p^{1}(t_{j-1})p^{t_{j-1}}( s -1)}  [x_{[i_1, j-1], s }]^{(1-d_{j-1})q^{t_{j-1}}( s )} [x_{[i_1, X],t_{j}}]^{d_{j-1}(1-\delta_{X, i_1})(1-d_{X-1})\delta_{ s ,t_j}}   \\
&
[ x_{X,t_{j}+1} ]^{(1-\delta_{X, i_1})d_{X-1} d_{j-1}\delta_{ s ,t_{j}}p^{t_X}(t_{j})+\delta_{X, i_1}d_{j-1}\delta_{ s ,t_j}} [x_{[i_1, j-1], s +1}]^{d_{j-1}(1-\delta_{ s ,t_{j}})} \\
&
 [x_{j-1, s +1}]^{(1-d_{j-1})((1-d_{i_1})\delta_{i_1, j-1}+(1-\delta_{i_1, j-1})d_{j-2}p^{t_{j-1}}( s ))} [x_{j+1, s }],
\end{align*}
\end{gather}
where $X=\max\{x \mid i_1 \leq x<j, \xi(x)=\xi(x+1)+1\}, Y=X-1$. 
Since 
\begin{gather}
\begin{align*}
x_{j, s } \nmid  &  (x_{[i_1, j-2],0})^{(1-\delta_{i_1,j-1})d_{j-2}q^{0}(t_{j-2})}  (x_{j-1,-1})^{ (1-\delta_{i_1, j-1})d_{j-2} p^{0}(t_{j-1})+\delta_{i_1, j-1} } (x_{[i_1, j-1], s })^{q^{t_{j-1}}( s )}\\
&(x_{[i_1, j-1],t_{j-1}})^{ d_{j-2}p^{1}(t_{j-1})p^{t_{j-1}}( s -1)}    (x_{j-1, s +1})^{ (1-d_{i_1})\delta_{i_1, j-1}+(1-\delta_{i_1, j-1})d_{j-2}p^{t_{j-1}}( s )} x_{j+1, s }
\end{align*}
\end{gather}
if $d_{j-1}=0$, and 
\begin{gather}
\begin{align*}
x_{[i_1, j], s -1} (x_{j, s +1})^{p^{ s +1}(t_{j-1})} = &  (A_{(j-1)_\bullet+1,\xi((j-1)_{\bullet}+1)-2 s })_{ r+ s}(A_{(j-1)_\bullet+2,\xi(j_{\bullet}+2)-2 s  })_{r+ s }\dots 
(A_{j ,\xi(j )-2 s })_{r+ s }\\ 
&  (x_{i_1-1,  s })^{\delta_{ s ,t_j} \delta_{{i_1}_\bullet, (j-1)_\bullet}} (x_{[i_1, Y],0})^{(1-\delta_{i_1,X})d_{X-1}q^{0}(t_Y)}  (x_{X,-1})^{(1-\delta_{i_1, X})d_{X-1}p^{0}(t_{X})+\delta_{i_1, X}} \\
&(x_{[i_1, X],\min\{t_X, t_{j}\}})^{ d_{X-1} p^{\min\{t_X, t_{j-1}\}}( s )p^{1}(t_X)} 
 (x_{[i_1, X],t_{j}})^{(1-\delta_{X, i_1})(1-d_{X-1})\delta_{ s ,t_j}} \\
&( x_{X,t_{j}+1} )^{(1-\delta_{X, i_1})d_{X-1} \delta_{ s ,t_{j}}p^{t_X}(t_{j})+\delta_{X, i_1}\delta_{ s ,t_j}} 
(x_{[i_1, j-1], s +1})^{1-\delta_{ s ,t_{j}}} 
 x_{j+1, s } 
\end{align*}
\end{gather}
if $d_{j-1}=1$ by induction, we have
\[
x_{j, s } x_{[i_1, j], s } = x_{[i_1, j], s -1} (x_{j, s +1})^{(1-d_{j-1})+d_{j-1}p^{ s +1}(t_{j-1})}	
\]
by Proposition \ref{lemma:equations satisfy by q-character}. Hence
\begin{align*}
x_{[i_1, j], s }& = \frac{ x_{[i_1, j], s -1} (x_{j, s +1})^{(1-d_{j-1})+d_{j-1}p^{ s +1}(t_{j-1})}} {x_{j, s } }	   \\
&=\begin{cases} \frac{  \widetilde{\omega}(i_1, \widehat{j})  (Y_{j ,\xi(j )+1})_{r, s}   ((Y_{j,\xi(j)+1})_{r, s +1} )^{p^{ s +1}(t_{j-1})}}{ (Y_{j ,\xi(j )+1})_{r, s  } }   ~&\text{if}~d_{j-1}=1,\\
 \frac{ \widetilde{\omega}(i_1,  \widehat{j} )  (Y_{j,\xi(j)-1})_{r, s-1 } (Y_{j,\xi(j)+1})_{r, s +1} }{ (Y_{j,\xi(j)+1})_{r, s } }  ~&\text{if}~d_{j-1}=0,
\end{cases}\\
&=\begin{cases}  \widetilde{\omega}(i_1, \widehat{j} )   ((Y_{j ,\xi(j )+1})_{r, s +1})^{p^{ s +1}(t_{j-1})}  ~&\text{if}~d_{j-1}=1,\\
  \widetilde{\omega}(i_1, \widehat{j})  (Y_{j,\xi(j)-1})_{r, s }~&\text{if}~d_{j-1}=0.
\end{cases}
\end{align*}
\end{proof}

Given a generalized HL-module, we can always construct a mutation sequence as shown in Section \ref{equivalence classes of generalized HL-modules as cluster variables} such that, applying this mutation sequence to the initial seed of $\seed$, $[L(x_{[i_1, i_k],0})]$ (respectively, $[L(x_{[i_1,i_k+1],r_k})]$) is the desired generalized HL-module if $a_{k-1} < a_k$, or $a_{k-1} > a_k$ and $r_k = 0$, or $n=i_k$ (respectively, $i_k < n$, $a_{k-1} > a_k$, and $r_k \neq 0$).

Finally, reality and primeness of generalized HL-modules follow from Lemma \ref{theorem of real prime} and Proposition \ref{pro:cluster variables}.

\section{Appendix}\label{appendix}

In the appendix, we prove that, under the assumption of Case 10 in Section \ref{Proof of Theorem on generalized HL-modules}, all arrows incident to $(j, r+s)$ in  $Q_\xi^{s}[i_1, j]$ are as shown in Figure \ref{arrows incident with 9}. 

We prove it by induction on $j-i_1$. 

\subsection{For $j=i_1+1$}

We have two cases to consider.

{\bf Case 1.} In the case where $j=i_1+1$, $\xi(i_1+1)=\xi(i_1+2)+1$, $t_{i_1+1}\geq 2$ and $d_{i_1}=0$, we have $t_{i_1}=0$. 

We prove it by induction on $s-1$. It follows from Figure \ref{arrows incident with 8} that the arrows incident to $(i_1+1, r+1)$ and $(i_1+1, r+2)$ in $\Q ^{1}[i_1, i_1+1]$  are shown in the left hand side of Figure \ref{The proof of lemmas fig: 15}. After mutating $(i_1+1, r+1)$ in $\Q ^{1}[i_1, i_1+1]$, the arrows incident to $(i_1+1, r+2)$ in $\Q ^{2}[i_1, i_1+1]$ are as required, see Figure \ref{The proof of lemmas fig: 15}. 
\begin{figure}\centering
\resizebox{0.7\textwidth}{!}{ 
\xymatrix{
 (i_1, r-1) & &    & &(i_1, r-1)  & \\
(i_1,r)\ar[rd]& &  &   & & &  & \\ 
&(i_1+1, r+1)\ar[luu] \ar[r] \ar[ld] &(i_1+2, r+1) \ar[ld]      && & (i_1+1, r+1) \ar[d]& 	\\	
 (i_1, r+2)\ar[r]&(i_1+1, r+2)\ar[ld]\ar[r]\ar[u]& (i_1+2, r+2) & && (i_1+1, r+2)\ar@/^/[luuu] \ar[r] \ar[ld]& (i_1+2, r+2)\\
 (i_1, r+3)& (i_1+1, r+ 3)\ar[u]& \ar[r]^{ \mu_{(i_1+1, r+1)}} && (i_1, r+3)& (i_1+1, r+3) \ar[u]& 
}}
\caption{Arrows incident to $ (i_1+1, r+1)$ and $(i_1+1, r+2)$ in $Q_{\xi}^{1}[i_1, i_1+1]$ (left), and arrows incident to $(i_1+1, r+2)$ in $Q_{\xi}^{2}[i_1, i_1+1]$ (right).}\label{The proof of lemmas fig: 15}
\end{figure}
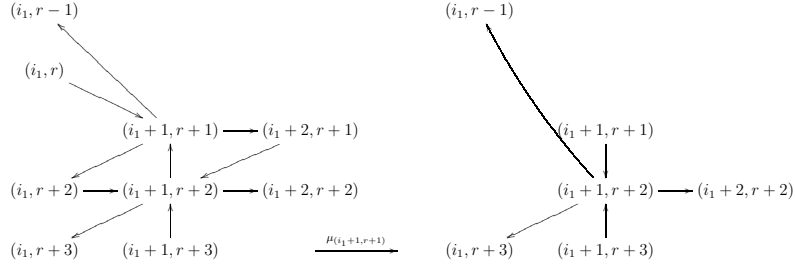	

Suppose that $2\leq s\leq t_{i_1+1}-1$, and the arrows incident to $(i_1+1, r+s)$ in $\Q^{s}[i_1, i_1+1]$ are the required arrows. By induction, we need to prove that the result holds for the arrows incident to $(i_1+1, r+s+1)$ in $\Q ^{s+1}[i_1, i_1+1]$. By induction the arrows incident to $(i_1+1,r+s)$ and $(i_1+1, r+s+1)$ in $\Q^{s}[i_1, i_1+1]$ are shown in the left hand side of Figure \ref{The proof of lemmas fig: 16}. After mutating $(i_1+1, r+s)$ in $\Q ^{s}[i_1, i_1+1]$, the arrows incident to $(i_1+1, r+s+1)$ in $\Q ^{s+1}[i_1, i_1+1]$ are as required, see Figure \ref{The proof of lemmas fig: 16}.
\begin{figure}\centering
\resizebox{\textwidth}{!}{
\xymatrix{
(i_1, r-1)& &  &    &(i_1, r-1)& &  & \\ 
& (i_1 +1, r+s-1) \ar[d]&  &  && &  & \\ 
 &(i_1 +1, r+s)\ar[luu] \ar[r] \ar[ld] &(i_1+2, r+s) \ar[ld]      && & (i_1 +1, r+s) \ar[d]&	\\	
   (i_1, r+s+1)\ar[r]&(i_1 +1, r+s+1)\ar[ld]\ar[r]\ar[u]& (i_1+2, r+s+1)  & && (i_1 +1, r+s+1) \ar[luuu] \ar[r] \ar[ld]& (i_1+2, r+s+1)\\
 (i_1, r+s+2)& (i_1 +1, r+s+2)\ar[u]& \ar[r]^{ \mu_{(i_1 +1, r+s)}} && (i_1, r+s+2)& (i_1 +1, r+s+2) \ar[u]& 
 }}
\caption{Arrows incident to $ (i_1 +1, r+s)$ and $(i_1 +1, r+s+1)$ in $Q_{\xi}^{s}[i_1, i_1+1]$ (left), and arrows incident to $(i_1 +1, r+s+1)$ in $Q_{\xi}^{s+1}[i_1, i_1+1]$ (right).}\label{The proof of lemmas fig: 16}
\end{figure}
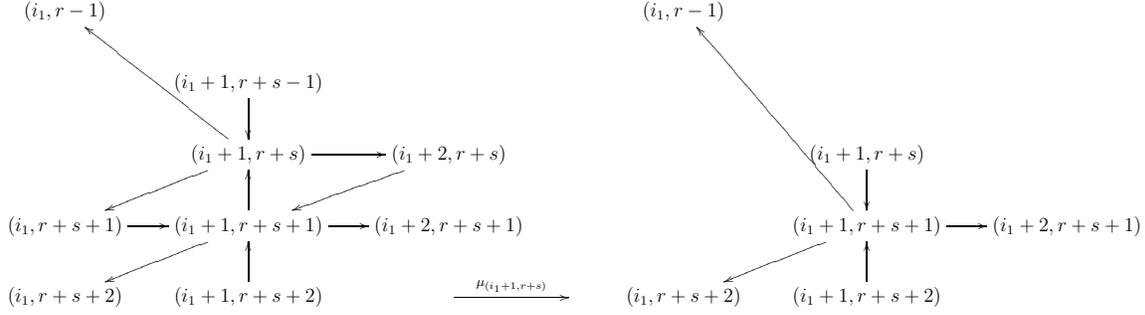		

{\bf Case 2.} In the case where $j=i_1+1$, $\xi(i_1+1)=\xi(i_1+2)+1$, $d_{i_1}=1$ and $t_{i_1+1}\geq 2$, we have $t_{i_1}=t_{i_1+1}$.

We first give the effects of the mutations of the vertices in the $i_1$th column on the arrows incident to the vertices in the $(i_1+1)$th column, as shown in Figures \ref{The proof of lemmas fig: 1} and \ref{The proof of lemmas fig: 2}. From now on, in our figures, the red arrows denote the arrows that are added during this mutation process and the red dashed arrows denote the arrows that are deleted during this mutation process.
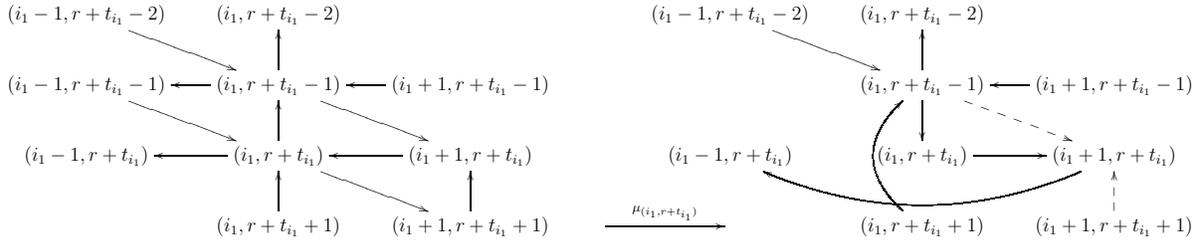
\begin{figure}
\centering
\resizebox{\textwidth}{!}{
\xymatrix{
(i_1-1, r+t_{i_1}-2) \ar[dr] &(i_1, r+t_{i_1}-2)   &  &&(i_1-1, r+t_{i_1}-2) \ar[dr] &   (i_1, r+t_{i_1}-2)&\\
(i_1-1, r+t_{i_1}-1) \ar[dr]&(i_1, r+t_{i_1}-1) \ar[u]\ar[l] \ar[rd]  & (i_1+1, r+t_{i_1}-1)\ar[l] && & (i_1, r+t_{i_1}-1)\ar@[red]@{-->}[dr] \ar[u] \ar[d]& (i_1+1,r+ t_{i_1}-1)\ar[l]\\
(i_1-1, r+t_{i_1})	&(i_1, r+t_{i_1})  \ar[u]\ar[l]\ar[rd] & (i_1+1, r+t_{i_1})\ar[l]   &&(i_1-1, r+t_{i_1})&  (i_1, r+t_{i_1}) \ar@[red][r]& (i_1+1, r+t_{i_1}) \ar@[red]@/^2.5pc/[ll]  \\
&(i_1, r+t_{i_1}+1)  \ar[u] & (i_1+1, r+t_{i_1}+1)\ar[u] & \ar[r]^{ \mu_{(i_1, r+t_{i_1})}}&& (i_1, r+t_{i_1}+1) \ar@/^2.5pc/[uu] & (i_1+1, r+t_{i_1}+1)\ar@[red]@{-->}[u]
} }
\caption{
The effect of the mutation at $(i_1, r+t_{i_1})$ on the arrows incident to $(i_1+1, r+t_{i_1})$ in $Q_{\xi}^{t_{i_1}}[i_1, i_1]$.
}\label{The proof of lemmas fig: 1}
\end{figure}

\begin{figure}
\centering
\resizebox{\textwidth}{!}{
\xymatrix{
(i_1-1, r+s-2) \ar[rd]&(i_1, r+s-2) &   &(i_1-1, r+s-2)\ar[rd]&(i_1,r+ s-2)&\\
(i_1-1,r+s-1) \ar[rd] & (i_1, r+s-1)\ar[l] \ar[u]\ar[rd]&(i_1+1, r+s-1)\ar[l]&    & (i_1, r+s-1) \ar[d]\ar[u] \ar@[red]@{-->}[rd] & (i_1+1, r+s-1)\ar[l]\\
& (i_1, r+s)\ar[u]\ar[d] &  (i_1+1, r+s)\ar[l] & &(i_1, r+s) \ar@[red][r]& (i_1+1, r+s)  \ar@[red][ld] \\
&(i_1, r+s+1) &\ar[r]^{\mu_{ (i_1, r+s)} } &&(i_1, r+s+1) &\\
& (i_1, r+t_{i_1}+1)\ar@/^3pc/[uu]& & &(i_1, r+t_{i_1}+1)  \ar@/^3.5pc/[uuu]&
} }
\caption{
The effect of the mutation at $(i_1, r+s)$ on the arrows incident to $(i_1+1, r+s)$ in $Q_{\xi}^{s}[i_1, i_1]$.
}\label{The proof of lemmas fig: 2}
\end{figure}
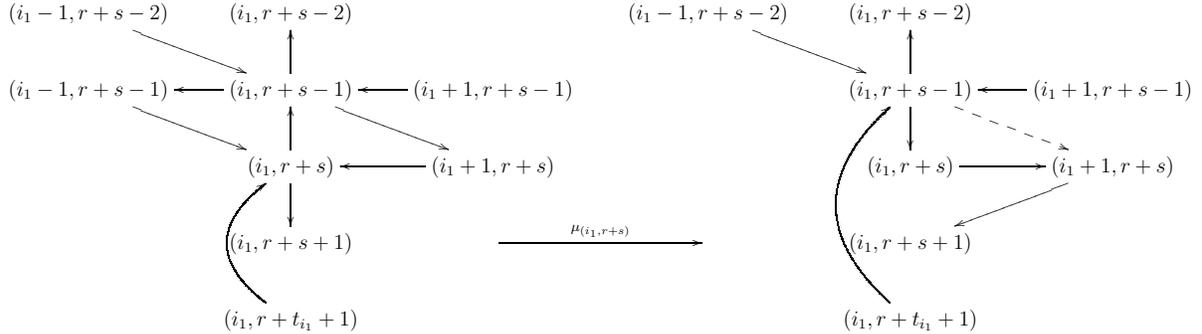	

We prove it by induction on $s-1$. It follows from Figures \ref{arrows incident with 8} and \ref{The proof of lemmas fig: 2} that the arrows incident to $(i_1+1, r+1)$ and $(i_1+1, r+2)$ in $\Q ^{1}[i_1, i_1+1]$  are shown in the left hand side of Figure \ref{The proof of lemmas fig: 11}. After mutating $(i_1+1, r+1)$ in $\Q ^{1}[i_1, i_1+1]$, the arrows incident to $(i_1+1, r+2)$ in $\Q ^{2}[i_1, i_1+1]$ are as required, see Figure \ref{The proof of lemmas fig: 11}.
 \begin{figure}\centering
\resizebox{\textwidth}{!}{
\xymatrix{ 
&(i_1, r)\ar[rd] & &  &  & & &  & \\ 
 &&(i_1+1, r+1) \ar[r] \ar[ld] &(i_1+2, r+1) \ar[ld]      && & (i_1+1, r+1) \ar[d]&	\\	
 (i_1-1, r+t_{i_1})& (i_1, r+2)\ar[r]&(i_1+1, r+2)\ar@/^1pc/[ll]^{\delta_{2,t_{i_1}}}\ar[ld]^{1-\delta_{2,t_{i_1}}}\ar[r]\ar[u]& (i_1+2, r+2)  & (i_1-1, r+t_{i_1}) && (i_1+1, r+2)   \ar[r] \ar@/^1pc/[ll]^{\delta_{2,t_{i_1}}}\ar[ld]^{1-\delta_{2,t_{i_1}}}& (i_1+2, r+2)\\
& (i_1, r+3)& (i_1+1, r+3)\ar[u]_{1-\delta_{2,t_{i_1}}}& \ar[r]^{ \mu_{(i_1+1, r+1)}} && (i_1, r+3)& (i_1+1, r+3) \ar[u]_{1-\delta_{2,t_{i_1}}}& 
}}
\caption{Arrows incident to $ (i_1+1, r+1)$ and $(i_1+1, r+2)$ in $Q_{\xi}^{1}[i_1, i_1+1]$ (left), and arrows incident to $(i_1+1, r+2)$ in $Q_{\xi}^{2}[i_1, i_1+1]$ (right).}\label{The proof of lemmas fig: 11}
\end{figure}

Suppose that $2\leq s\leq t_{i_1+1}-1$, and the arrows incident to $(i_1+1, r+s)$ in $\Q^{s}[i_1, i_1+1]$ are the required arrows. By induction, we need to prove that the result holds for the arrows incident to $(i_1+1, r+s+1)$ in $\Q ^{s+1}[i_1, i_1+1]$. By induction and Figures \ref{The proof of lemmas fig: 1}, \ref{The proof of lemmas fig: 2}, the arrows incident to $(i_1+1, r+s)$ and $(i_1+1, r+s+1)$ in $\Q^{s}[i_1, i_1+1]$ are shown in the left hand side of Figure \ref{The proof of lemmas fig: 12}. After mutating $(i_1+1, r+s)$ in $\Q ^{s}[i_1, i_1+1]$, the arrows incident to $(i_1+1, r+s+1)$ in $\Q ^{s+1}[i_1, i_1+1]$ are as required, see Figure \ref{The proof of lemmas fig: 12}.
\begin{figure}\centering
\resizebox{\textwidth}{!}{
\xymatrix{
&& (i_1 +1, r+s-1) \ar[d]&  &  && &  & \\ 
& &(i_1 +1, r+s) \ar[r] \ar[ld] &(i_1+2, r+s) \ar[ld]      && & (i_1 +1, r+s) \ar[d]&	\\
(i_1-1, r+ t_{i_1}) &  (i_1, r+s+1)\ar[r]&(i_1 +1, r+s+1) \ar@/^1pc/[ll]^{\delta_{s+1,t_{i_1}}}
\ar[ld]^{1-\delta_{s+1,t_{i_1}}}\ar[r]\ar[u]& (i_1+2, r+s+1)  & (i_1-1, r+t_{i_1}) && (i_1 +1, r+s+1)  \ar@/^1pc/[ll]^{\delta_{s+1,t_{i_1}}} \ar[r] \ar[ld]^{1-\delta_{s+1,t_{i_1}}} & (i_1+2, r+s+1)\\
&  (i_1, r+s+2)& (i_1 +1, r+s+2)\ar[u]_{1-\delta_{s+1,t_{i_1}}}& \ar[r]^{ \mu_{(i_1 +1, r+s)}} && (i_1, r+s+2)& (i_1+1, r+s+2) \ar[u]_{1-\delta_{s+1,t_{i_1}}}& 
}}
\caption{Arrows incident to $ (i_1 +1, r+s)$ and $(i_1 +1, r+s+1)$ in $Q_{\xi}^{s}[i_1, i_1+1]$ (left), and arrows incident to $(i_1 +1, r+s+1)$ in $Q_{\xi}^{s+1}[i_1, i_1+1]$ (right).}\label{The proof of lemmas fig: 12}
\end{figure}
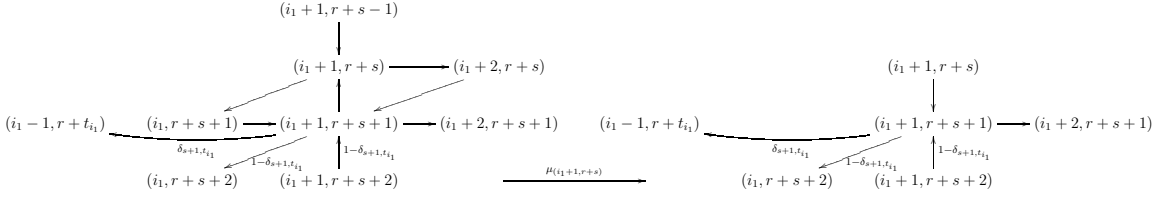

\subsection{Inductive step}	

Suppose that the arrows incident to $(j, r+s)$ in $Q_{\xi}^{s}[i_1, j]$ are the required arrows. By induction, we need to prove that the result holds for the arrows incident to $(j+1, r+s)$ in $Q_{\xi}^{s}[i_1, j+1]$. We have $3$ cases to consider.

{\bf Case 1.} In the case where $\xi(j+1)=\xi(j+2)+1$, $t_{j+1}\geq 2$, $d_j=1$, we have $t_{j+1}=t_j$. 

We first give the effects of the mutations of the vertices in the $j$th column on the arrows incident to the vertices in the $(j+1)$th column, as shown in Figures \ref{The proof of lemmas fig: 19} and \ref{The proof of lemmas fig: 20}.
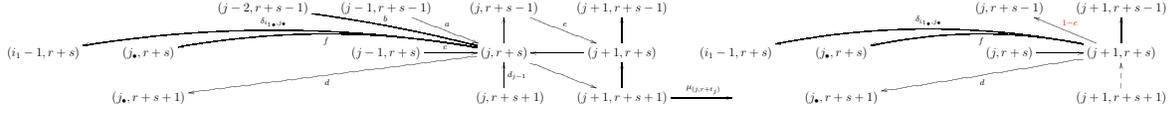
\begin{figure}\centering
\resizebox{\textwidth}{!}{
\xymatrix{
&&(j-2,  r+s-1)\ar@/^0.4pc/[rrd]^{b} & (j-1,  r+s-1) \ar[rd]^{a} &(j, r+s-1) \ar[rd]^{e}  & (j+1, r+s-1)&&&&& (j, r+s-1) &(j+1, r+s-1)&\\
(i_1-1, r+s)&(j_\bullet, r+s) & & (j-1, r+s) \ar[r]^{c}  & (j, r+s)  \ar[u]  \ar[dr] \ar@/^-1.8pc/[llll]_{\delta_{{i_1}_\bullet, j_{\bullet}}} \ar@/^-1.5pc/[lll]^{f} \ar[dlll]^{d}&(j+1, r+s)\ar[u] \ar[l]&(i_1-1, r+s)&(j_\bullet, r+s) &&&  (j, r+s)\ar@[red][r]&(j+1, r+s) \ar@[red][lu]_{\textcolor{red}{1-e}} \ar[u] \ar@[red][lllld]^{d} \ar@[red]@/^-1.5pc/[llll]^{f}\ar@[red]@/^-1.9pc/[lllll]_{\delta_{{i_1}_\bullet, j_{\bullet}}}& \\
&(j_\bullet, r+s+1) && &\quad (j, r+ s+1) \ar[u]_{d_{j-1}}  & (j+1, r+s+1) \ar[u]\ar[r]^{\qquad \qquad\mu_{(j, r+t_j)}} &&(j_\bullet, r+s+1)& &&&(j+1, r+s+1)\ar@[red]@{-->}[u]& 
}}
\caption{
The effect of the mutation at $(j, r+s)$ on the arrows incident to $(j+1, r+s)$ in $Q_{\xi}^{s}[i_1, j]$, where $s=t_j$, $a=d_{j-1}q^{t_{j-1}}(s)(1-\delta_{1,s})$, $b=d_{j-1} q^{t_{j-1}}(s)\delta_{1,s}$, $c=d_{j-1}p^{t_{j-1}}(s-1)+(1-d_{j-1})$, $d=(1-\delta_{i_1, j_{\bullet}}-\delta_{{i_1}_\bullet, j_{\bullet}})p^{t_{j_\bullet}}(s)  d_{j_\bullet-1}+\delta_{i_1, j_{\bullet}}$, $ e=\delta_{s,1}(1-d_j)+(1-\delta_{s,1})$, $f=(1-\delta_{{i_1}_\bullet, j_{\bullet}})q^{t_{j_\bullet}}(s)$. 	
}\label{The proof of lemmas fig: 19}
\end{figure}
					
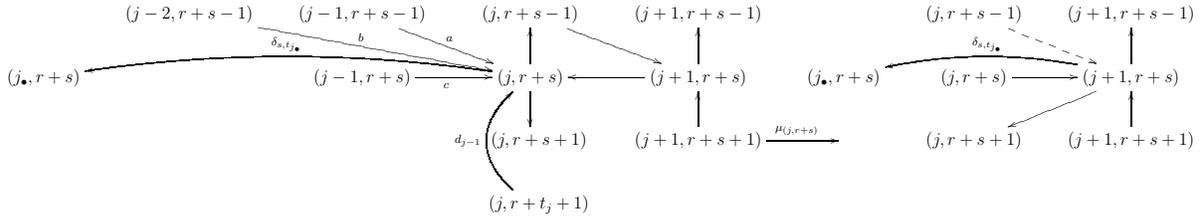
\begin{figure}\centering
\resizebox{\textwidth}{!}{
\xymatrix{
&(j-2,  r+s-1) \ar[rrd]^{b}& (j-1, r+s-1) \ar[rd]^{a} &(j, r+s-1) \ar[dr]  &  (j+1, r+s-1)& &(j, r+s-1)\ar@[red]@{-->}[rd]&(j+1, r+s-1)\\
(j_\bullet, r+s) & & (j-1, r+s) \ar[r]_{c}& (j, r+s) \ar[u]  \ar[d]  \ar@/^-1.2pc/[lll]_{\delta_{s, t_{j_\bullet}}}&(j+1, r+s) \ar[l]\ar[u]&(j_\bullet, r+s) & (j, r+s)\ar@[red][r] &(j+1, r+s)\ar@[red]@/^-1.2pc/[ll]_{\delta_{s, t_{j_\bullet}}}\ar@[red][ld]\ar[u]\\
&&  &\quad (j, r+s+1)   & (j+1, r+s+1) \ar[r]^{\quad\qquad \mu_{(j, r+s)}}  \ar[u]&&(j, r+s+1)&(j+1, r+s+1)\ar[u]\\
&&  &\quad (j, r+t_{j}+1) \ar@/^2.5pc/[uu]^{d_{j-1}}&  &&
}}
\caption{ 
The effect of the mutation at $(j, r+s)$ on the arrows incident to $(j+1, r+s)$ in $Q_{\xi}^{s}[i_1, j]$, where $1\leq s\leq t_j-1$, $a=d_{j-1} q^{t_{j-1}}(s) (1-\delta_{1,s})$, $b=q^{t_{j-1}}(s)d_{j-1}\delta_{1,s}$, $c=d_{j-1}p^{t_{j-1}}(s-1)+(1-d_{j-1})$. }\label{The proof of lemmas fig: 20}
\end{figure}		

We prove it by induction on $s-1$. It follows from Figures \ref{arrows incident with 8}, \ref{The proof of lemmas fig: 19} and \ref{The proof of lemmas fig: 20} that the arrows incident to $(j+1,r+1)$ and $(j+1, r+2)$ in $\Q^{1}[i_1, j+1]$ are shown in the left hand side of Figure \ref{The proof of lemmas fig: 27}.  After mutating $(j+1,r+1)$ in $\Q^{1}[i_1, j+1]$, the arrows incident to $(j+1, r+2)$ in $\Q ^{2}[i_1, j+1]$ are as required, see Figure \ref{The proof of lemmas fig: 27}, where $a=(1-\delta_{{i_1}_{\bullet},j_{\bullet}}-\delta_{i_1, j_{\bullet}})d_{j_{\bullet}-1}p^{0}(t_{j_\bullet})+\delta_{i_1, j_{\bullet}}$, $b=(1-\delta_{i_1, j_{\bullet}}-\delta_{{i_1}_{\bullet},j_{\bullet}})q^{0}(t_{j_\bullet})$, $c=(1-\delta_{2,t_j})\delta_{2,t_{j_{\bullet}}}+\delta_{2,t_j}(1-\delta_{{i_1}_\bullet, j_{\bullet}})q^{t_{j_\bullet}}(2)$, $d=(1-\delta_{{i_1}_\bullet, j_{\bullet}}-\delta_{i_1, j_{\bullet}})p^{t_{j_{\bullet}}}(t_j)d_{j_\bullet-1}\delta_{2,t_j}+\delta_{2,t_j}\delta_{i_1, j_{\bullet}}$, $e=1-\delta_{2,t_j}$, $f=p^3(t_j)$, $g=\delta_{2,t_j}\delta_{{i_1}_\bullet, j_{\bullet}}$.
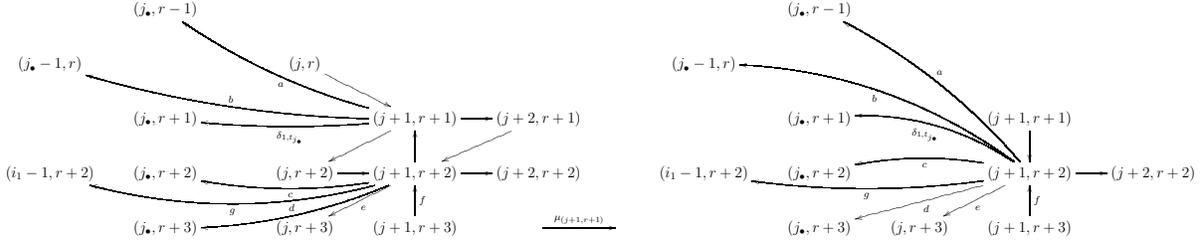
\begin{figure}\centering
\resizebox{\textwidth}{!}{
\xymatrix{
&(j_{\bullet}, r-1)  &   & & & && &(j_{\bullet}, r-1)&  & & \\
(j_{\bullet}-1, r) && & (j, r)\ar[rd]& & & &(j_{\bullet}-1, r) & &  & &  \\
&(j_{\bullet}, r+1)& &    & (j+1,r+1) \ar@/^0.5pc/[lll]^{\delta_{1,t_{j_{\bullet}}}} \ar[r]   \ar[ld]\ar@/^1pc/[llllu]_{b} \ar@/^1pc/[llluu]^{a}& (j+2,r+1)  \ar[ld]& &&(j_{\bullet}, r+1) &  &(j+1,r+1)\ar[d] &  \\
(i_1-1, r+2)	&(j_\bullet, r+2)	&   &    (j, r+2) \ar[r] & (j+1, r+2)\ar@/^1pc/[llld]_{d} \ar[ld]^{e}  \ar@/^2pc/[llll]^{g} \ar@/^1pc/[lll]^{c} \ar[u] \ar[r]& (j+2, r+2)  &&(i_1-1, r+2)&(j_\bullet, r+2) &  &(j+1, r+2)\ar[lld]^{d}\ar@/^1pc/[lll]^{g}  \ar@/_1pc/[ll]^{c}   \ar@/_1.5pc/[llu]^{\delta_{1,t_{j_{\bullet}}}} \ar@/_2pc/[llluu]^{b} \ar@/_1pc/[lluuu]_{a} \ar[r] \ar[ld]^{e} &(j+2, r+2)\\
&(j_\bullet, r+3)	&  &   (j, r+3)  &(j+1, r+3) \ar[u]_{f}&\ar[r]^{ \mu_{(j+1,r+1)}}  & && (j_\bullet, r+3) & (j, r+3) &  (j+1, r+3) \ar[u]_{f}
}}
\caption{ 
Arrows incident to $ (j+1,r+1)$ and $(j+1, r+2)$ in $Q_{\xi}^{1}[i_1, j+1]$ (left), and arrows incident to $(j+1, r+2)$ in $Q_{\xi}^{2}[i_1, j+1]$ (right).
 }\label{The proof of lemmas fig: 27}
\end{figure}	
					
Suppose that $2\leq  s\leq t_{j+1}-1$, and the arrows incident to $(j+1, r+s)$ in $\Q^{s}[i_1, j+1]$ are the required arrows. By induction, we need to prove that the result holds for the arrows incident to $(j+1, r+s+1)$ in $\Q ^{s+1}[i_1, j+1]$. By induction and Figures \ref{The proof of lemmas fig: 19}, \ref{The proof of lemmas fig: 20}, the arrows incident to $(j+1, r+s)$ and $(j+1, r+s+1)$ in $\Q^{s}[i_1, j+1]$ are shown in the left hand side of Figure \ref{The proof of lemmas fig: 28}, where $a=p^{t_{j_\bullet}}(s)p^{1}(t_{j_\bullet})$, $b=(1-\delta_{{i_1}_{\bullet},j_{\bullet}}-\delta_{i_1, j_{\bullet}})d_{j_{\bullet}-1}p^{0}(t_{j_\bullet})+\delta_{i_1, j_{\bullet}}$, $c=1-\delta_{s+1,t_{j}}$, $d=p^{s+2}(t_{j})$,  $e=(1-\delta_{s+1,t_j})\delta_{s+1,t_{j_{\bullet}}}+\delta_{s+1,t_j}(1-\delta_{{i_1}_\bullet, j_{\bullet}})q^{t_{j_\bullet}}(s+1)$, $f=(1-\delta_{{i_1}_\bullet, j_{\bullet}}-\delta_{i_1, j_{\bullet}})p^{t_{j_\bullet}}(t_j) d_{j_\bullet-1}\delta_{s+1,t_j}+\delta_{s+1,t_j}\delta_{i_1, j_{\bullet}}$, $g=\delta_{t_j, s+1}\delta_{{i_1}_\bullet, j_{\bullet}}$, $h=(1-\delta_{{i_1}_{\bullet},j_{\bullet}}-\delta_{i_1, j_{\bullet}})  q^0(t_{j_{\bullet}}) $. After mutating $(j+1, r+s)$ in $\Q^{s}[i_1, j+1]$, the arrows incident to $(j+1, r+s+1)$ in $\Q ^{s+1}[i_1, j+1]$ are as required, see Figure \ref{The proof of lemmas fig: 28}.					
\begin{figure}\centering
\resizebox{\textwidth}{!}{
\xymatrix{
& (j_{\bullet}, r-1) &   &  & & &(j_{\bullet}, r-1) &&\\
(j_{\bullet}-1, r)&   & & (j+1, r+s-1) \ar[d] & & (j_{\bullet}-1, r)&&&&\\
&  (j_\bullet, r+t_{j_{\bullet}})&  & (j+1, r+s) \ar[ld]\ar@/^-1.5pc/[ll]_{a}   \ar@/^-2pc/[lluu]_{b}  \ar@/^-1.2pc/[lllu]_{h}   \ar[r]   & (j+2, r+s) \ar[ld] && (j_\bullet, r+t_{j_{\bullet}}) &&(j+1, r+s)\ar[d]&\\
(i_1-1, r+s+1) & (j_{\bullet}, r+s+1) &  (j, r+s+1) \ar[r]& (j+1, r+s+1) \ar@/_-0.8pc/[lld]^{ f} \ar[r]\ar@/_-0.8pc/[ld]^{c} \ar[u]  \ar@/_-1.2pc/[ll]^{ e} \ar@/_-2.2pc/[lll]^{g} &  (j+2, r+s+1)   & (i_1-1, r+s+1) &(j_\bullet, r+s+1)   &  &(j+1, r+s+1) \ar[ll]_{ e} \ar@/^1pc/[lll]^{ g} \ar@/^0.5pc/[lld]^{ f}  \ar[lul]_{a}   \ar@/^-2pc/[lluuu]_{b}  \ar@/^-0.9pc/[llluu]_{h }  \ar@/^0.1pc/[ld]^{c} \ar[r]&(j+2, r+s+1) &\\
&(j_\bullet, r+t_j+1)& (j, r+s+2) &  (j+1, r+s+2) \ar[u]_{d}&\ar[r]^{ \mu_{(j+1,r+s)}} 	&&(j_\bullet, r+t_j+1) & (j, r+s+2) & (j+1, r+s+2)\ar[u] _{d}&
}}
\caption{ Arrows incident to $ (j+1, r+s)$ and $(j+1, r+s+1)$ in $Q_{\xi}^{s}[i_1, j+1]$ (left), and arrows incident to $(j+1, r+s+1)$ in $Q_{\xi}^{s+1}[i_1, j+1]$ (right).}\label{The proof of lemmas fig: 28}
\end{figure}
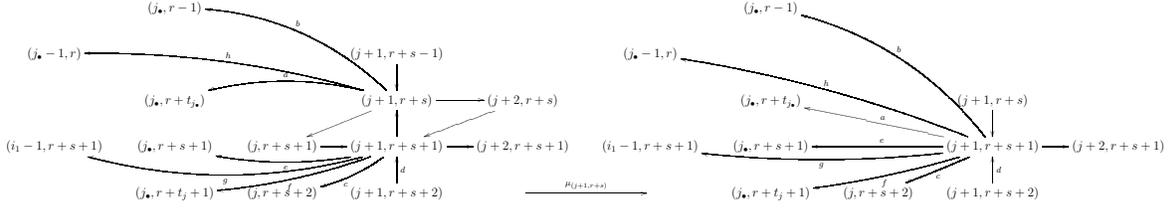						
						
{\bf Case 2.} In the case where $\xi(j+1)=\xi(j+2)+1$, $d_{j-1}=d_j=0$, $t_{j+1}\geq 2$, we have $t_j=t_{j+1}$.

We first give the effects of the mutations of the vertices in the $j$th column on the arrows incident to the vertices in the $(j+1)$th column, as shown in Figure \ref{The proof of lemmas fig: 31}, where $X=\max\{  x \mid i_1\leq x<j, \xi(x)=\xi(x+1)+1\}$, $Y=X-1$, $a=(1-d_{j-1})d_{j-2}p^{1}(t_{j-1})p^{t_{j-1}}(s-1)+ d_{j-1}d_{X-1}p^{\min\{t_X, t_{j-1}\}}(s)p^{1}(t_X)$, $b=(1-d_{j-1})q^{t_{j-1}}(s)$,  $c=(1-\delta_{X, i_1})d_{X-1}d_{j-1}\delta_{s, t_{j}}p^{t_X}(t_{j})+\delta_{X, i_1}d_{j-1}\delta_{s, t_{j}},$ $d=d_{j-1}(1-\delta_{s, t_{j}})+d_{j-2}(1-d_{j-1})p^{t_{j-1}}(s)$, $f=(1-d_{j-1})+d_{j-1}p^{s+1}(t_{j-1})$, $h=d_{j-1}(1-\delta_{i_1, X})(1-d_{X-1})\delta_{s, t_j}$.
\begin{figure}\centering
\resizebox{\textwidth}{!}{
\xymatrix{
& (X, r-1) & &  &  &  \\
(Y, r)	&  &   &  & &   \\
& (X, r+\min\{t_X,t_{j-1}\})  & &  &(j, r+s-1)\ar[d] &(j+1, r+s-1) & (j, r+s-1)\ar@[red][rd] &(j+1, r+s-1)\\
(i_1-1, r+t_{j})&  (X, r+t_{j})& (j-1, r+s) & & (j, r+s) \ar@/^-1pc/[lll]_{h}  \ar@/^-1.9pc/[llll]_{d_{j-1}\delta_{i_\bullet,(j-1)_\bullet}\delta_{s, t_j}}   \ar@/^-2pc/[llluuu]_{\qquad(1-\delta_{i_1, X})d_{X-1}p^{0}(t_{X})+\delta_{i_1, X}}   \ar@/^-1.5pc/[lllu]_{a} \ar@/^-1.7pc/[lllluu]_{d_{X-1}q^0(t_X) }  \ar[ll]_{b}\ar[r] \ar[lld]_{d}  \ar@/^-0.5pc/[llld]_{\quad }_{c} & (j+1, r+s) \ar[u]\ar[ld] \ar[r]^{\quad \mu_{(j, r+s)}} &(j, r+s) & (j+1, r+s) \ar[ld]_{1-f}\ar@[red][l] \ar[u]\\
&(X, r+t_{j}+1)  &  (j-1, r+s+1) & & (j, r+s+1)  \ar[u] _{f}&  (j+1, r+s+1) \ar[u]&(j, r+s+1)& (j+1, r+s+1)\ar[u]
}}
\caption{ 
The effect of the mutation at $(j, r+s)$ on the arrows incident to $(j+1, r+s)$ in $Q_{\xi}^{s}[i_1, j]$.
}\label{The proof of lemmas fig: 31}
\end{figure}
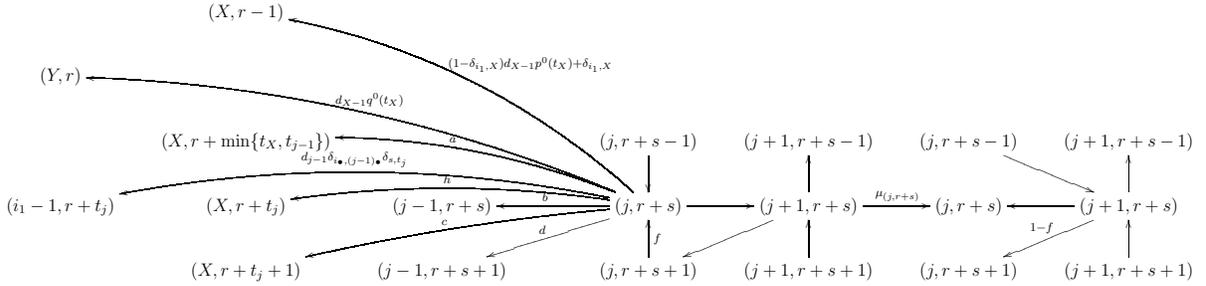	

We prove it by induction on $s-1$. It follows from Figures \ref{arrows incident with 8} and \ref{The proof of lemmas fig: 31} that the arrows incident to $(j+1,r+1)$ and $(j+1, r+2)$ in $\Q^{1}[i_1, j+1]$ are shown in the left image of Figure \ref{The proof of lemmas fig: 32}.  After mutating $(j+1,r+1)$ in $\Q^{1}[i_1, j+1]$, the arrows incident to $(j+1, r+2)$ in $\Q^{2}[i_1, j+1]$ are as required, see Figure \ref{The proof of lemmas fig: 32}.
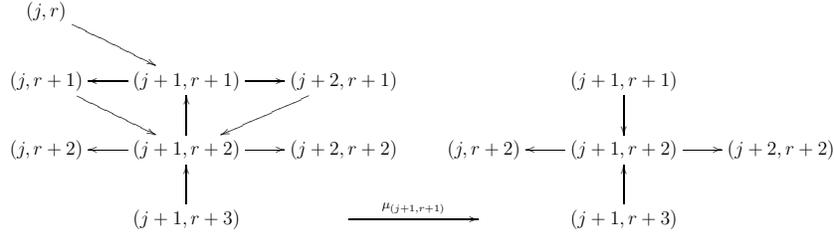
\begin{figure}
\centering
\resizebox{0.7\textwidth}{!}{ 
\xymatrix{
 (j, r) \ar[rd] & & \\
 ( j, r+1)\ar[rd] & (j+1, r+1)\ar[l]\ar[r] & (j+2, r+1)\ar[ld]& & (j+1,r+1)\ar[d] &  \\
 (j, r+2) & (j+1, r+2)\ar[u]\ar[l]\ar[r] &(j+2, r+2) 
 & (j, r+2) &  (j+1, r+2) \ar[l]\ar[r] &(j+2, r+2) \\
 & (j+1, r+3) \ar[u] & \ar[r]^{ \mu_{(j+1, r+1)}}&& (j+1, r+3)\ar[u]& 
  }}
\caption{Arrows incident to $(j+1, r+1)$ and $(j+1, r+2)$ in $Q_{\xi}^{1}[i_1, j+1]$ (left), and arrows incident to $(j+1, r+2)$ in $Q_{\xi}^{2}[i_1, j+1]$ (right).}
\label{The proof of lemmas fig: 32}
\end{figure}

Suppose that $2\leq  s\leq t_{j+1}-1$, and the arrows incident to $(j+1, r+s)$ in $\Q^{s}[i_1, j+1]$ are the required arrows. By induction, we need to prove that the result holds for the arrows incident to $(j+1, r+s+1)$ in $\Q ^{s+1}[i_1, j+1]$. By induction and Figure \ref{The proof of lemmas fig: 31}, the arrows incident to $(j+1, r+s)$ and $(j+1, r+s+1)$ in $\Q^{s}[i_1, j+1]$ are shown in the left hand side of Figure \ref{The proof of lemmas fig: 33}. After mutating $(j+1, r+s)$ in $\Q^{s}[i_1, j+1]$, the arrows incident to $(j+1, r+s+1)$ in $\Q ^{s+1}[i_1, j+1]$ are as required, see Figure \ref{The proof of lemmas fig: 33}.
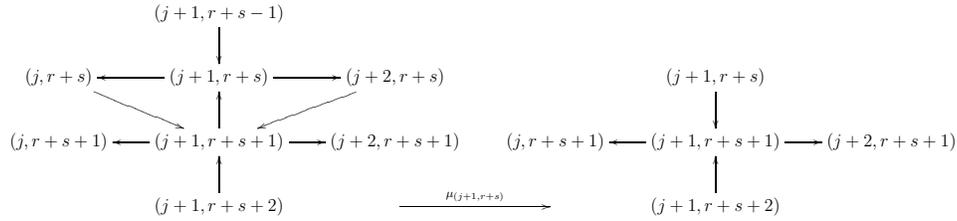
\begin{figure}
\centering
\resizebox{0.8\textwidth}{!}{ 
\xymatrix{
&(j+1, r+s-1) \ar[d]  & \\
(j, r+s)\ar[rd] & (j+1, r+s)\ar[l]\ar[r] & (j+2, r+s)\ar[ld]& & (j+1, r+s)\ar[d] &  \\
(j, r+s+1) & (j+1, r+s+1)\ar[u]\ar[l]\ar[r] &(j+2, r+s+1)  & (j, r+s+1) &  (j+1, r+s+1) \ar[l]\ar[r] &(j+2, r+s+1) \\
& (j+1, r+s+2) \ar[u] & \ar[r]^{ \mu_{(j+1, r+s)}}&& (j+1, r+s+2)\ar[u]& 
 }}
\caption{Arrows incident to $(j+1, r+s)$ and $(j+1, r+s+1)$ in $Q_{\xi}^{s}[i_1, j+1]$ (left), and arrows incident to $(j+1, r+s+1)$ in $Q_{\xi}^{s+1}[i_1, j+1]$ (right).}\label{The proof of lemmas fig: 33}
\end{figure}

{\bf Case 3.} In the case where $\xi(j+1)=\xi(j+2)+1$, $d_{j-1}=1$, $d_j=0$, $t_{j+1}\geq 2$, we have $t_{j-1}=t_j$.

We need to consider $2$ cases: $t_{j-1}=t_j \leq 0$ or $t_{j-1}=t_j >0$. In the former case, we prove it by induction on $s-1$. It follows from Figure \ref{arrows incident with 8} that the arrows incident to $(j+1,r+1)$ and $(j+1, r+2)$ in $\Q^{1}[i_1, j+1]$ are shown in the left hand side of Figure \ref{The proof of lemmas fig: 41}.  After mutating $(j+1, r+1)$ in $\Q^{1}[i_1, j+1]$, the arrows incident to $(j+1, r+2)$ in $\Q ^{2}[i_1, j+1]$ are as required, see Figure \ref{The proof of lemmas fig: 41}.
\begin{figure}
\centering
\resizebox{0.8\textwidth}{!}{ 
\xymatrix{
 &(j, r-1)   & & & &(j, r-1)   &  \\
(j-1, r)& (j, r) \ar[rd] &  & &(j-1, r)& & \\
& &(j+1, r+1) \ar[r]\ar[llu] \ar[luu]_{p^{0}(t_j)} \ar[ld]&(j+2, r+1)  \ar[ld]& && (j+1, r+1)\ar[d]  &  \\
  & (j, r+2) \ar[r]& (j+1, r+2) \ar[ld]\ar[u]\ar[r]&(j+2, r+2)  \ar[u]  
 && & (j+1, r+2) \ar@/^1pc/[luuu]_{p^{0}(t_j)} \ar[lluu] \ar[r] \ar[ld]&(j+2, r+2) \\
 &(j, r+3) & (j+1, r+3) \ar[u] & \ar[r]^{ \mu_{(j+1, r+1)}}  & & (j, r+3) & (j+1, r+3) \ar[u]  & 
}}
\caption{Arrows incident to $(j+1, r+1)$ and $(j+1, r+2)$ in $Q_{\xi}^{1}[i_1, j+1]$ (left), and arrows incident to $ (j+1, r+2)$ in $Q_{\xi}^{2}[i_1, j+1]$ (right).}
\label{The proof of lemmas fig: 41}
\end{figure}
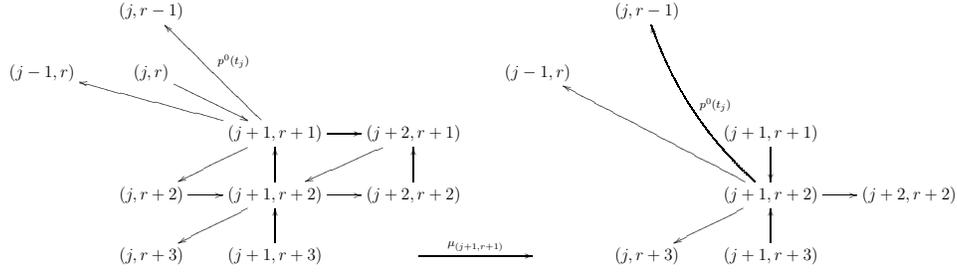

Suppose that $2\leq  s\leq t_{j+1}-1$, and the arrows incident to $(j+1, r+s)$ in $\Q^{s}[i_1, j+1]$ are the required arrows. By induction, we need to prove that the result holds for the arrows incident to $(j+1, r+s+1)$ in $\Q ^{s+1}[i_1, j+1]$. By induction the arrows incident to $(j+1, r+s)$ and $(j+1, r+s+1)$ in $\Q^{s}[i_1, j+1]$ are shown in the left hand side of Figure~\ref{The proof of lemmas fig: 42}. After mutating $(j+1, r+s)$ in $\Q^{s}[i_1, j+1]$, the arrows incident to $(j+1, r+s+1)$ in $\Q ^{s+1}[i_1, j+1]$ are as required, see Figure \ref{The proof of lemmas fig: 42}.
\begin{figure}
\centering
\resizebox{0.8\textwidth}{!}{ 
\xymatrix{
 &(j, r-1)   & & & &(j, r-1)   &  \\
(j-1, r)& &  (j+1, r+s-1)\ar[d]& &(j-1, r)& & \\
& &(j+1, r+s) \ar[r]\ar[llu] \ar@/^1.6pc/[luu]_{p^{0}(t_j)} \ar[ld]&(j+2, r+s)  \ar[ld]& && (j+1, r+s)\ar[d]  &  \\
  &(j, r+s+1)\ar[r] & (j+1, r+s+1) \ar[ld]\ar[u]\ar[r]&(j+2, r+s+1)   
 && & (j+1, r+s+1) \ar[luuu]_{p^{0}(t_j)} \ar[lluu] \ar[r] \ar[ld]&(j+2, r+s+1) \\
  &(j, r+s+2) & (j+1, r+s+2) \ar[u] &  \ar[r]^{  \mu_{(j+1, r+s)}}  & & (j, r+s+2) & (j+1, r+s+2) \ar[u]  & 
}}
\caption{Arrows incident to $(j+1, r+s)$ and $(j+1, r+s+1)$ in $Q_{\xi}^{s}[i_1, j+1]$ (left), and arrows incident to $(j+1, r+s+1)$ in $Q_{\xi}^{s+1}[i_1, j+1]$ (right).} \label{The proof of lemmas fig: 42}
\end{figure}
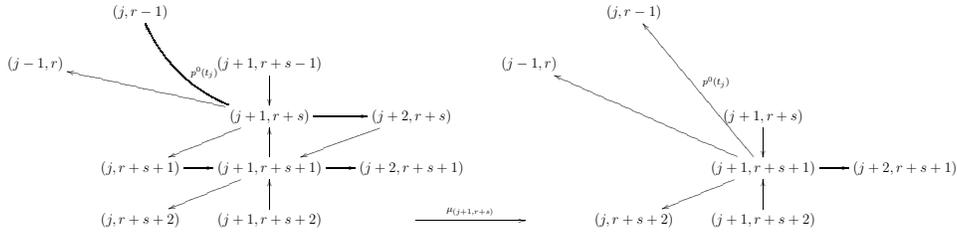

In the latter case, we first give the effects of the mutations of the vertices in the $j$th column on the arrows incident to the vertices in the $(j+1)$th column, as shown in Figure~\ref{The proof of lemmas fig: 31}.	

If $2\leq s\leq t_j-1$, we prove it by induction on $s-1$. It follows from Figures \ref{arrows incident with 8} and \ref{The proof of lemmas fig: 31} that the arrows incident to $(j+1,r+1)$ and $(j+1, r+2)$ in $\Q^{1}[i_1, j+1]$ are shown in the left hand side of Figure \ref{The proof of lemmas fig: 45} (the green arrows correspond to the case of $t_j=1$, the blue arrows correspond to the case of $t_j>1$).  After mutating $(j+1,r+1)$ in $\Q^{1}[i_1, j+1]$, the arrows incident to $(j+1, r+2)$ in $\Q ^{2}[i_1, j+1]$ are as required, see Figure \ref{The proof of lemmas fig: 45}.
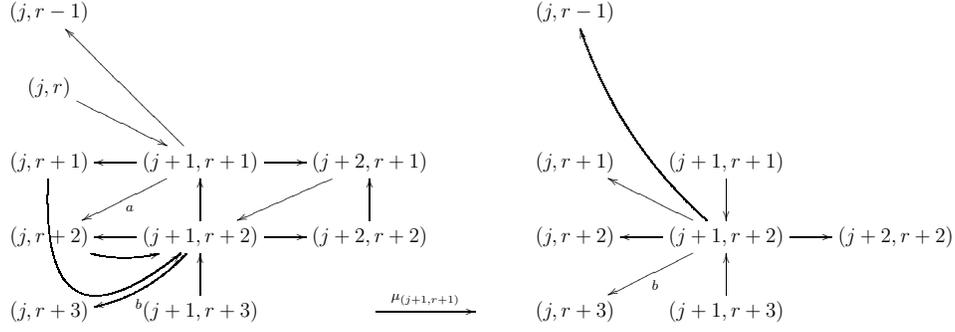
\begin{figure}
\centering
\resizebox{0.8\textwidth}{!}{ 
\xymatrix{
 (j, r-1)   & & & & (j, r -1)  &  \\
 (j, r) \ar[rd] &  & & & & \\
( j, r+1)\ar@[blue]@/^-4.95pc/[rd]&(j+1, r+1)\ar[ld]^{a} \ar[r]\ar[l] \ar[luu]  &(j+2, r+1)  \ar[ld]& &(j, r+1)& (j+1, r+1)\ar[d]  &  \\
   (j, r+2) \ar@[green]@/^-1pc/[r] & (j+1, r+2)  \ar@/_-1pc/[ld]^{b} \ar@[blue][l]\ar[u]\ar[r]&(j+2, r+2)  \ar[u]  
 && (j, r+2)& (j+1, r+2) \ar@[green][lu]\ar[ld]^{b} \ar@[blue][l]\ar@/^1pc/[luuu] \ar[r]  &(j+2, r+2) \\
  (j, r+3) & (j+1, r+3) \ar[u] &\ar[r]^{  \mu_{(j+1, r+1)}}   & & (j, r+3) & (j+1, r+3) \ar[u]  & 
}}
\caption{Arrows incident to $(j+1, r+1)$ and $(j+1, r+2)$ in $Q_{\xi}^{1}[i_1, j+1]$ (left), and arrows incident to $(j+1, r+2)$ in $Q_{\xi}^{2}[i_1, j+1]$ (right), where $a=1-p^{2}(t_{j-1})$, $b=1-p^{3}(t_{j-1})$.}
\label{The proof of lemmas fig: 45}
\end{figure}

Suppose that $2\leq s \leq t_{j}-2$, and the arrows incident to $(j+1, r+s)$ in $\Q^{s}[i_1, j+1]$ are the required arrows. By induction, we need to prove that the result holds for the arrows incident to $(j+1, r+s+1)$ in $\Q ^{s+1}[i_1, j+1]$. By induction and Figure \ref{The proof of lemmas fig: 31}, the arrows incident to $(j+1, r+s)$ and $(j+1, r+s+1)$ in $\Q^{s}[i_1, j+1]$ are shown in the left hand side of Figure \ref{The proof of lemmas fig: 46}. After mutating $(j+1, r+s)$ in $\Q^{s}[i_1, j+1]$, the arrows incident to $(j+1, r+s+1)$ in $\Q ^{s+1}[i_1, j+1]$ are as required, see Figure \ref{The proof of lemmas fig: 46}. Then our result holds for $2\leq s\leq t_{j}-1$.  
\begin{figure}
\centering
\resizebox{0.8\textwidth}{!}{ 
\xymatrix{
 (j, r-1)   & & & &  (j, r-1) &  \\
 &  (j+1, r+s-1)\ar[d]& & & & \\
(j, r+s)\ar[rd] &(j+1, r+s) \ar[r]\ar[l] \ar[luu] &(j+2, r+s)  \ar[ld]& && (j+1, r+s)\ar[d]  &  \\
 (j, r+s+1) & (j+1, r+s+1)\ar[l]  \ar[u]\ar[r]&(j+2, r+s+1)   
 && (j, r+s+1) & (j+1, r+s+1) \ar[luuu]  \ar[r] \ar[l]&(j+2, r+s+1) \\
    & (j+1, r+s+2) \ar[u] &  \ar[r]^{\mu_{(j+1, r+s)}}  && & (j+1, r+s+2) \ar[u]  & 
  }}
\caption{Arrows incident to $(j+1, r+s)$ and $(j+1, r+s+1)$ in $Q_{\xi}^{s}[i_1, j+1]$ (left), and arrows incident to $ (j+1, r+s+1)$ in $Q_{\xi}^{s+1}[i_1, j+1]$ (right).} \label{The proof of lemmas fig: 46}
\end{figure}
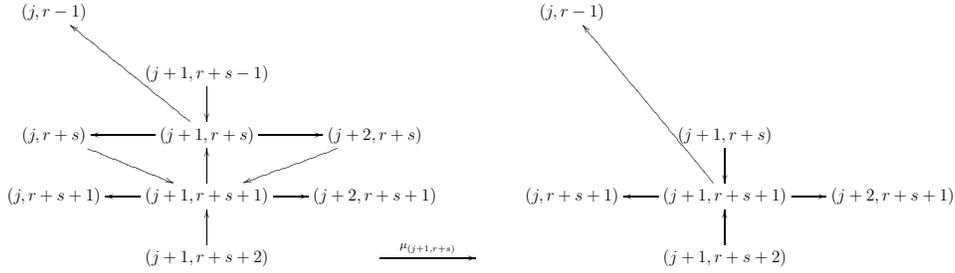
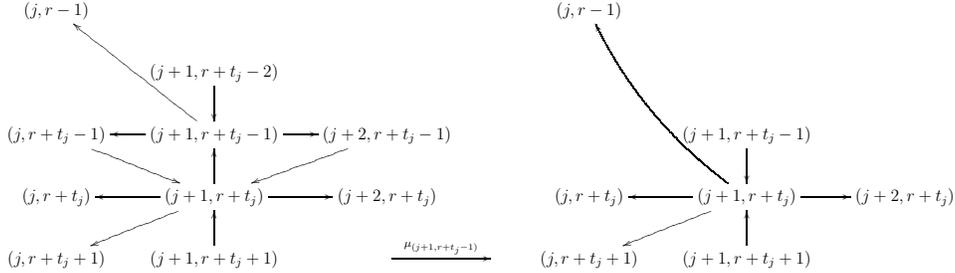
\begin{figure}
	\centering
	\resizebox{0.8\textwidth}{!}{ 
		\xymatrix{
			(j, r-1)   & & & &  (j, r-1) &  \\
			&  (j+1, r+t_j-2)\ar[d]& & & & \\
			(j, r+t_j-1)\ar[rd] &(j+1, r+t_j-1) \ar[r]\ar[l] \ar[luu] &(j+2, r+t_j-1)  \ar[ld]& && (j+1, r+t_j-1)\ar[d]  &  \\
			(j, r+t_{j}) & (j+1, r+t_{j})\ar[ld]\ar[l]  \ar[u]\ar[r]&(j+2, r+t_{j})  
			&& (j, r+t_{j}) & (j+1, r+t_{j}) \ar@/_-1pc/[luuu]   \ar[r] \ar[l]\ar[ld]&(j+2, r+t_{j}) \\
			(j, r+t_{j}+1)  & (j+1, r+t_{j}+1) \ar[u] & \ar[r]^{ \mu_{(j+1, r+t_j-1)}}   & & (j, r+t_{j}+1) & (j+1, r+t_{j}+1) \ar[u]  & 
	}}
	\caption{Arrows incident to $(j+1,r+ t_j-1)$ and $(j+1,r+ t_j)$ in $Q_{\xi}^{t_j-1}[i_1, j+1]$ (left), and arrows incident to $ (j+1, r+t_j)$ in $Q_{\xi}^{t_j}[i_1, j+1]$ (right).} \label{The proof of lemmas fig: 47}
\end{figure}

If $s=t_j$ (here $t_j\geq 3$, see Figure \ref{The proof of lemmas fig: 45} if $t_j=2$), it follows from Figures \ref{The proof of lemmas fig: 45} and \ref{The proof of lemmas fig: 46} that the arrows incident to $(j+1, r+t_j-1)$ and $(j+1, r+t_{j})$ in $\Q^{t_{j}-1}[i_1, j+1]$ are shown in the left hand side of Figure \ref{The proof of lemmas fig: 47}.  After mutating $(j+1, r+t_j-1)$ in $\Q ^{t_{j}-1}[i_1, j+1]$, the arrows incident to $(j+1, r+t_{j})$ in $\Q ^{t_{j}}[i_1, j+1]$ are as required, see Figure \ref{The proof of lemmas fig: 47}. Then our result holds for $s=t_{j}$. 
 
If $s\geq t_j+1$, we prove it by induction on $s-t_j$. It follows from Figures \ref{The proof of lemmas fig: 45} and \ref{The proof of lemmas fig: 47} that the arrows incident to $(j+1, r+t_{j})$ and $(j+1, r+t_{j}+1)$ in $\Q^{t_{j}}[i_1, j+1]$ are shown in the left hand side of Figure \ref{The proof of lemmas fig: 48}.  After mutating $(j+1, r+t_{j})$ in $\Q^{t_{j}}[i_1, j+1]$, the arrows incident to $(j+1, r+t_{j}+1)$ in $\Q^{t_{j}+1}[i_1, j+1]$ are as required, see Figure \ref{The proof of lemmas fig: 48}.
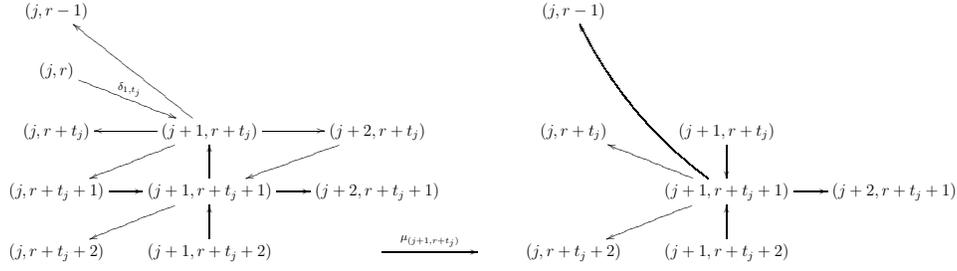
\begin{figure}
\centering
\resizebox{0.8\textwidth}{!}{ 
\xymatrix{
 (j, r-1)   & & & &  (j, r-1) &  \\
 (j, r) \ar[rd]^{\delta_{1,t_j}\qquad}&  & & & & \\
(j, r+t_{j}) &(j+1, r+t_{j})\ar[ld] \ar[r]\ar[l] \ar[luu] &(j+2, r+t_{j}) \ar[ld]& & (j, r+t_{j})& (j+1, r+t_{j})\ar[d]  &  \\
  (j, r+t_j+1)\ar[r] & (j+1, r+t_{j}+1) \ar[ld]  \ar[u]\ar[r]&(j+2, r+t_{j}+1)   
 && & (j+1, r+t_{j}+1) \ar@/_-1pc/[luuu]   \ar[r] \ar[lu]\ar[ld]&(j+2, r+t_{j}+1) \\
   (j, r+t_{j}+2)  & (j+1, r+t_{j}+2) \ar[u] &  \ar[r]^{ \mu_{(j+1, r+t_{j})}}  & & (j, r+t_{j}+2) & (j+1, r+t_{j}+2) \ar[u]  & 
 }}
\caption{Arrows incident to $(j+1, r+t_j)$ and $(j+1,r+ t_{j}+1)$ in $Q_{\xi}^{t_j}[i_1, j+1]$ (left), and arrows incident to $ (j+1, r+t_{j}+1)$ in $Q_{\xi}^{t_j+1}[i_1, j+1]$ (right).} \label{The proof of lemmas fig: 48}
\end{figure}
\begin{figure}
\centering
\resizebox{0.8\textwidth}{!}{ 
\xymatrix{
  (j, r-1)  & & & &  (j, r-1) &  \\
 (j, r+t_{j}) &(j+1, r+s-1) \ar[d]& &  & (j, r+t_{j}) & \\
&(j+1, r+s) \ar[ld] \ar[r]\ar[lu] \ar[luu] &(j+2, r+s)  \ar[ld]& & & (j+1, r+s) \ar[d]  &  \\
(j, r+s+1)\ar[r] & (j+1, r+s+1) \ar[ld]  \ar[u]\ar[r]&(j+2, r+s+1)\ar[ld]    && & (j+1, r+s+1) \ar@/_-0.5pc/[luuu]  \ar@/_-1.5pc/[luu]   \ar[r]  \ar[ld]&(j+2, r+s+1) \\
 (j, r+s+2)  & (j+1, r+s+2) \ar[u] &  \ar[r]^{ \mu_{(j+1, r+s)}}  & & (j, r+s+2)& (j+1, r+s+2) \ar[u]  & 
 }}
\caption{Arrows incident to $(j+1,r+s)$ and $(j+1, r+s+1)$ in $Q_{\xi}^{s}[i_1, j+1]$ (left), and arrows incident to $ (j+1, r+s+1)$ in $Q_{\xi}^{s+1}[i_1, j+1]$ (right).} \label{The proof of lemmas fig: 49}
 \end{figure}

Suppose that $t_{j}+1 \leq  s\leq t_{j+1}-1$, and the arrows incident to $(j+1, r+s)$ in $\Q^{s}[i_1, j+1]$ are the required arrows. By induction, we need to prove that the result holds for the arrows incident to $(j+1, r+s+1)$ in $\Q ^{s+1}[i_1, j+1]$. By induction the arrows incident to $(j+1, r+s)$ and $(j+1, r+s+1)$ in $\Q^{s}[i_1, j+1]$ are shown in the left hand side of Figure \ref{The proof of lemmas fig: 49}. After mutating $(j+1, r+s)$ in $\Q^{s}[i_1, j+1]$, the arrows incident to $(j+1, r+s+1)$ in $\Q ^{s+1}[i_1, j+1]$ are as required, see Figure \ref{The proof of lemmas fig: 49}.

\section*{Acknowledgements}
The work was partially supported by the National Natural Science Foundation of China (No. 12171213, 12001254) and by Gansu Province Science Foundation for Youths (No. 22JR5RA534).

\end{document}